\theoremstyle{plain}
\newtheorem{lemma}{Lemma}[section]
\newtheorem{prop}[lemma]{Proposition}
\newtheorem{theo}[lemma]{Theorem}
\newtheorem{coro}[lemma]{Corollary}
\theoremstyle{remark}
\newtheorem{rem}[lemma]{Remark}
\newtheorem*{notat}{Notation}
\theoremstyle{definition}
\newtheorem{definition}[lemma]{Definition}
\newtheorem{ex}[lemma]{Example}
\newcommand{\B}{\mathcal{B}}
\newcommand{\op}{\textnormal{op}}
\newcommand{\id}{\mathrm{Id}}
\newcommand{\N}{\mathbb{N}}
\newcommand{\tq}{\;/\;}
\renewcommand{\S}{\mathcal{S}}
\newcommand{\arr}{-triangle 45}
\newcommand{\fintop}{\text{\bf FinTop}}
\newcommand{\fintopzero}{\text{\bf FinTop}_{\bf 0}}
\DeclareMathOperator{\im}{Im}
\DeclareMathOperator{\aut}{Aut}
\DeclareMathOperator{\mnl}{mnl}
\DeclareMathOperator{\rel}{\textnormal{rel}}
\newcommand{\Top}{\textnormal{\bf Top}}
\newcommand{\ev}{\textnormal{ev}}
\newcommand{\pos}{\textnormal{\bf Pos}}
\newcommand{\cat}{\textnormal{\bf Cat}}
\begin{document}
\title[Fibrations between finite topological spaces]{Fibrations between finite topological spaces}

\author{Nicolas Cianci}
\address{Facultad de Ciencias Exactas and Naturales \\ Universidad Nacional de Cuyo \\ Mendoza, Argentina.}
\email{nicocian@gmail.com}

\author{Miguel Ottina}
\address{Facultad de Ciencias Exactas and Naturales \\ Universidad Nacional de Cuyo \\ Mendoza, Argentina.}
\email{miguelottina@gmail.com}

\subjclass[2010]{Primary: 55R05, 55R15. Secondary: 54C15.}


\keywords{Finite topological space, Finite poset, Hurewicz Fibration, Grothendieck fibration, Serre fibration.}

\thanks{Research partially supported by grant M044 (2016--2018) of SeCTyP, UNCuyo. The first author was also partially supported by a CONICET doctoral fellowship.}

\begin{abstract}
We study Hurewicz fibrations between finite T$_0$--spaces from a combinatorial viewpoint and give strong conditions that a continuous map between finite T$_0$--spaces must satisfy in order to be a Hurewicz fibration. We also show that there exists a strong relationship between Hurewicz fibrations between finite T$_0$--spaces and Grothendieck bifibrations. Finally we give several interesting examples that illustrate this theory and show that many of the assumptions of our results are necessary.
\end{abstract}

\maketitle

\section{Introduction}

In this article we will study the combinatorial aspects of the Hurewicz fibrations between finite T$_0$--spaces. Some of the results we obtain can be easily extended to Hurewicz fibrations between Alexandroff T$_0$--spaces. However, we will not do explicit mention of these facts in this work since we are interested in studying this problem in the context of finite topological spaces, where more things can be said.

We mention that in \cite{cianci2019combinatorial} we gave a complete combinatorial characterization of the cofibrations between finite topological spaces, while in \cite{cianci2019classification} we obtained a classification theorem for fiber bundles over Alexandroff spaces with T$_0$ fiber. However, the same problem for fibrations seems a much harder task.

\medskip

This article is organized as follows. In section 3 we give two preliminary results about continuity of maps whose codomain is an Alexandroff space. In section 4 we extend some of Stong's definitions to the category $\fintopzero/B$ for a given finite T$_0$--space $B$, and we prove that many of the results of the theory of Stong, as well as the results of \cite{cianci2019combinatorial} regarding bp--retracts, can be generalized to this category. Then, in section 5, we develop results which relate the beat points of fibrations between finite T$_0$--spaces with the beat points of its base space, total space and fibers, as well as the behaviour of the Hurewicz fibrations in the presence or absense of such beat points.

In section 6, we study the regularity of Hurewicz fibrations between finite T$_0$--spaces and we apply the results obtained to give a strong relationship between Hurewicz fibrations and Grothendieck bifibrations which, under specific and strong conditions over the base space, suffices to completely characterize the Hurewicz fibrations between finite T$_0$--spaces over that base space.

Finally, in the last section of the article we give examples that show that many of the assumptions of our results are indeed necessary.

\section{Preliminaries}

In this section we will recall several definitions and results that will be needed to develop the results of this article.

First of all we will introduce some notation that will be used throughout the article.

\begin{notat}\ 
\begin{itemize}
\item The topological space $[0,1]$ with the usual topology will be denoted by $I$.
\item Let $X$ be a topological space. For each $t\in I$ we define the map $i_t\colon X\to X\times I$ by $i_t(x)=(x,t)$.
\item Let $X$ and $Y$ be topological spaces. The space of continuous maps from $X$ to $Y$ with the compact-open topology will be denoted by $Y^X$.
\item Let $X$ and $Y$ be topological spaces. We define the \emph{evaluation map} as the map $\ev\colon Y^X \times X \to Y$ given by $\ev(f,x)=f(x)$. We also define, for each $a\in X$, the map $\ev_a\colon Y^X \to Y$ given by $\ev_a(f)=f(a)$.
\item Let $X$, $Y$ and $Z$ be topological spaces. Let $g\colon Y\to Z$ be a continuous map. We define the map $g^X\colon Y^X \to Z^X$ by $g^X(\alpha)=g\circ\alpha$.
\item The category of topological spaces and continuous maps will be denoted by $\Top$. 
In addition, if $B$ is a topological space, $\Top/B$ will denote the slice category of $\Top$ over $B$, that is the category whose objects are the continuous maps with codomain $B$ and whose morphisms are given by commutative triangles.
\item Similarly, the category of finite topological spaces and continuous maps will be denoted by $\fintop$. The full subcategory of $\fintop$ whose objects are the finite T$_0$--spaces will be denoted by $\fintopzero$. In addition, if $B$ is a finite T$_0$--space, $\fintopzero/B$ will denote the slice category of $\fintopzero$ over $B$.
\item The category of small categories will be denoted by $\cat$. The category of posets and order-preserving morphisms will be denoted by $\pos$. We will frequently consider a poset as a small category with arrows given by the order relation and therefore $\pos$ will be sometimes regarded as a subcategory of $\cat$.
\end{itemize}
\end{notat}

We also recall some definitions regarding homotopies in a slice category of $\Top$.

\begin{definition}\label{defi:rdf_sobre_B}
Let $B$ be a topological space and let $p\colon E\to B$ and $p'\colon E'\to B$ be two continuous maps considered as objects of $\Top/B$. Let $f\colon p\to p'$ be an arrow over $B$. We say that $f$ is a \emph{homotopy equivalence (over $B$)} if there exists an arrow $g\colon p'\to p$ over $B$, which is called \emph{homotopy inverse of $f$ (over $B$)} such that $gf\simeq \id_p$ and $fg\simeq\id_{p'}$. In that case, we say that $p$ and $p'$ are \emph{homotopy equivalent over $B$} or \emph{fiber homotopy equivalent}. 

Let $A\subseteq E$ be a subspace and let $i\colon A\to E$ be the inclusion map. Consider the continuous map $p|=pi\colon A\to B$ as an object over $B$. 

We will say that $p|$ is a \emph{strong deformation retract} of $p$ if there exists a retraction $r\colon p\to p|$, such that $\id_p\simeq ir\ (\rel A)$.

Note that if $p|$ is a strong deformation retract of $p$ in $\Top/B$, then $A$ is a strong deformation retract of $E$ in $\Top$. 
\end{definition}

\subsection{Finite topological spaces}

Let $X$ be an Alexandroff topological space. For each $a\in X$ we define $U^X_a$ as the intersection of all the open subsets of $X$ that contain $a$, and it will be also denoted by $U_a$ when the space in which it is considered is clear from the context. Clearly, $U_a$ is the smallest open subset of $X$ that contains $a$ and $\{U_x\tq x\in X\}$ is a basis for the topology of $X$. We define a preorder relation in $X$ by $x_1\leq x_2$ if and only if $U_{x_1}\subseteq U_{x_2}$. It follows that, for each $a\in X$, $U_a=\{x\in X \tq x\leq a\}$.

The preorder given above is a partial order if and only if $X$ is a T$_0$--space. Moreover, this defines a one-to-one correspondence between Alexandroff topologies in a set $X$ and preorder relations in $X$, which was first stated by Alexandroff \cite{alexandroff1937diskrete}. Moreover, a map between Alexandroff spaces is continuous if and only if it is an order-preserving morphism between the associated preordered sets.

If $X$ is an Alexandroff space and $a\in X$ it is standard to define 
\begin{itemize}
\item $F^X_a=\overline{\{a\}}=\{x\in X \tq x\geq a\}$,
\item $\widehat{U}^X_a=U^X_a-\{a\}=\{x\in X \tq x< a\}$, and
\item $\widehat{F}^X_a=F^X_a-\{a\}=\{x\in X \tq x>a\}$.
\end{itemize}
where the superindex $X$ is usually omitted from the notation when the Alexandroff space in which these sets are considered is clear from the context.

\smallskip

Now let $X$ be a finite T$_0$--space and let $a\in X$. We say that $a$ is a \emph{down beat point} of $X$ if the set $\widehat{U}_a$ has a maximum element and we say that $a$ is an \emph{up beat point} of $X$ if the set $\widehat{F}_a$ has a minimum element. We say that $a$ is a \emph{beat point} of $X$ if it is either a down beat point or an up beat point \cite{barmak2011algebraic, may2016finite, stong1966finite}. In addition, we say that $X$ is a \emph{minimal space} if $X$ does not have beat points. A subspace $A\subseteq X$ is a \emph{core} of $X$ if $A$ is a minimal space and a strong deformation retract of $X$. Observe that one can obtain a core of $X$ by successively removing its beat points.

Stong proved in \cite{stong1966finite} that if $X$ is a finite T$_0$--space and $a$ is a beat point of $X$, then $X-\{a\}$ is a strong deformation retract of $X$. In addition, he proved that two cores of a finite T$_0$--space $X$ are homeomorphic and that two finite T$_0$--spaces are homotopy equivalent if and only if they have homeomorphic cores. He also obtained the following result in \cite[Theorem 3]{stong1966finite} and its proof.
  
\begin{prop}[{\cite[p.330]{stong1966finite}}]\label{prop_stong}
Let $X$ be a finite T$_0$--space and let $f\colon X\to X$ be a continuous map.  
\begin{enumerate}
\item If $X$ does not have down beat points and $f\leq\id_X$, then $f=\id_X$.
\item If $X$ does not have up beat points and $f\geq\id_X$, then $f=\id_X$.
\item $X$ does not have beat points and $f\simeq \id_X$, then $f=\id_X$.
\end{enumerate}
\end{prop}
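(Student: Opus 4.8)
The plan is to prove (1) by a minimal-counterexample argument on the finite poset $X$, to deduce (2) from (1) by passing to the opposite poset, and to derive (3) from (1) and (2) together with the standard description of homotopies between maps of finite spaces.

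For (1), I would assume $f\le\id_X$ but $f\neq\id_X$ and set $T=\{x\in X\tq f(x)<x\}$, which is then nonempty; since $X$ is finite, pick $a$ minimal in $T$. The claim is that $f(a)=\max\widehat{U}_a$, which contradicts the absence of down beat points. Indeed $f(a)<a$ puts $f(a)$ in $\widehat{U}_a$, so $\widehat{U}_a\neq\varnothing$; and for any $y<a$, minimality of $a$ gives $y\notin T$, hence $f(y)=y$, so $y=f(y)\le f(a)$ by order-preservation of $f$. Thus every element of $\widehat{U}_a$ lies below $f(a)\in\widehat{U}_a$, giving the maximum.

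Statement (2) should follow immediately by applying (1) to $f$ regarded as a self-map of $X^{\op}$, noting that $f\ge\id_X$ becomes $f\le\id_{X^{\op}}$ and that $X$ has no up beat points exactly when $X^{\op}$ has no down beat points (alternatively one reruns the argument of (1) with $T=\{x\tq f(x)>x\}$, a maximal element $a\in T$, and $\widehat{F}_a$). For (3), I would use that for finite $T_0$--spaces $X$ and $Y$ the mapping space $Y^X$ is again a finite space whose associated order is the pointwise order, so that homotopic maps are precisely those joined by a finite zigzag $f_0,\dots,f_n$ of pointwise-comparable maps; applying this to $\id_X\simeq f$ with $f_0=\id_X$ and $f_n=f$, part (1) or part (2) (according as $f_1\le\id_X$ or $f_1\ge\id_X$) forces $f_1=\id_X$, and iterating along the zigzag gives $f_i=\id_X$ for all $i$, in particular $f=\id_X$.

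The element-chasing in (1) and the reduction in (2) are routine; the one ingredient I would expect to need justification is the description of homotopy used in (3) — that $f\simeq g$ between finite spaces is equivalent to being connected by a fence of pointwise-comparable maps — which is where one uses that $Y^X$ is Alexandroff with the pointwise order, so that its path components coincide with its order-connected components. That is the main (and essentially the only non-formal) obstacle.
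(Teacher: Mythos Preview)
Your proof is correct and is essentially the standard argument. The paper does not actually prove this proposition --- it is quoted from Stong's original article --- but the paper does prove the relative version over a base $B$ (Proposition~\ref{prop:sin_dbp_comparable_con_id_implica_id}), and the argument there is exactly your argument for (1): take $A=\{e:h(e)<e\}$, pick a minimal element, and show it is a down beat point. The paper treats (2) by saying ``the other case is similar'' rather than via the opposite-poset trick, and it obtains the relative analogue of (3) (Corollary~\ref{coro:p_mnl_entonces_h_es_id}) from the fence description of homotopies recorded in Proposition~\ref{prop:fence}, just as you do.
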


Stong also proves that finite spaces are exponentiable \cite[Lemma 1]{stong1966finite} and obtains as a corollary that if $f,g\colon X\to Y$ are continuous maps between finite spaces such that $f(x)\leq g(x)$ for all $x\in X$ then $f$ and $g$ are homotopic relative to the subset $\{x\in X:f(x)=g(x)\}$. We will give a generalization of this result in \ref{prop:f_leq_g_son_hom}.

The following is a generalization of \cite[Theorem 7(b)]{stong1966finite} for maps over a finite space $B$. We will omit its proof since it is analogous to that of the original result of Stong.

\begin{prop}\label{prop:fence}
Let $X$, $Y$ and $B$ be finite topological spaces, let $A\subseteq X$ and let $p\colon X\to B$ and $q\colon Y\to B$ be continuous maps, considered as objects over $B$. Let $f,g\colon X\to Y$ be continuous maps such that $f|_A=g|_A$ and $qf=qg=p$, considered as arrows over $B$ from $p$ to $q$. 
\begin{enumerate}
\item If $f\leq g$, then $f$ is fiber homotopic to $g$ relative to $A$.
\item If $f$ and $g$ are fiber homotopic relative to $A$, then there exist $n\in\N$ and continuous maps $f_0,f_1,\ldots,f_n$ over $B$ from $p$ to $q$ whose restrictions to $A$ coincide with those of $f$, such that $f=f_0\leq f_1\geq \cdots\leq f_n=g$.
\end{enumerate}
\end{prop}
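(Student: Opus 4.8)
Since the statement is announced with a proof ``analogous to Stong's,'' the plan is to reproduce the argument for \cite[Theorem 7(b)]{stong1966finite}, the only extra ingredient being to carry the base space $B$ along by working inside a suitable subspace of the mapping space $Y^X$ rather than in $Y^X$ itself. Recall (Stong, \cite[Lemma 1]{stong1966finite}) that for finite $X$ the space $Y^X$ is again a finite space, whose preorder is the pointwise one: $\alpha\leq\beta$ iff $\alpha(x)\leq\beta(x)$ for all $x\in X$. I would introduce
\[
M \;=\; \{\, h\in Y^X \tq qh=p \ \text{ and }\ h|_A=f|_A \,\},
\]
the subspace of those continuous maps $X\to Y$ that are arrows over $B$ from $p$ to $q$ and restrict to $f|_A$ on $A$; this is again a finite space, with the restricted pointwise order, and $f,g\in M$ because $qf=qg=p$ and $f|_A=g|_A$.

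For part (1), assuming $f\leq g$, I would take the adjoint $H\colon X\times I\to Y$ of the map $I\to Y^X$ that equals $f$ on $[0,1)$ and takes the value $g$ at $1$; this map is continuous precisely because $f\leq g$ in $Y^X$, so $H$ is continuous. A direct check gives $Hi_0=f$, $Hi_1=g$; that $H(a,t)=f(a)=g(a)$ for all $a\in A$ and $t\in I$, so $H$ is relative to $A$; and that $qH(x,t)=p(x)$ for every $(x,t)$ because $qf=qg=p$, so $H$ is a homotopy over $B$ in the sense of Definition~\ref{defi:rdf_sobre_B}. Hence $f$ and $g$ are fiber homotopic relative to $A$.

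For part (2), given a fiber homotopy $H\colon X\times I\to Y$ from $f$ to $g$ relative to $A$, I would pass to its adjoint $\widehat H\colon I\to Y^X$; the requirements that $H$ be over $B$ and relative to $A$ say exactly that $\widehat H(t)\in M$ for every $t\in I$, and since $M$ carries the subspace topology, $\widehat H$ is a path in $M$ from $f$ to $g$. Thus $f$ and $g$ lie in the same connected component of $M$. Now I would invoke the standard fact that in a finite space the connected components coincide with the components of the comparability graph: each such ``fence component'' $C$ satisfies $U_h\subseteq C$ for $h\in C$, hence is clopen, and it is connected because any two of its points are joined by a finite zigzag of comparabilities and therefore, by the construction in part (1), by a path. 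This yields a zigzag $f=h_0,h_1,\dots,h_m=g$ in $M$ with consecutive terms comparable; inserting a repeated term whenever two consecutive comparabilities point the same way (and, if necessary, a copy of $f$ or of $g$ at an end) rewrites it in the required form $f=f_0\leq f_1\geq\cdots\leq f_n=g$, where each $f_j\in M$ is an arrow over $B$ from $p$ to $q$ with $f_j|_A=f|_A=g|_A$.

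I do not expect a genuine obstacle: the argument is essentially bookkeeping, namely checking that exponentiability of finite spaces restricts compatibly to $M$ (so that homotopies over $B$ rel $A$ correspond to paths in $M$) and then quoting the classical description of the components of a finite poset. The one point that must be handled carefully is the definition of $M$ itself — it has to encode \emph{both} constraints $qh=p$ and $h|_A=f|_A$ at once, so that the fence extracted from the path in $M$ automatically consists of arrows over $B$ all agreeing with $f$ on $A$; weakening $M$ to either constraint alone would lose exactly the information the proposition asserts.
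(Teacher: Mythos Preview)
Your proposal is correct and is precisely the ``analogous to Stong'' argument the paper has in mind: work in the finite subspace $M\subseteq Y^X$ of maps over $B$ agreeing with $f$ on $A$, use the adjunction $C(X\times I,Y)\cong C(I,Y^X)$ to identify fiber homotopies rel $A$ with paths in $M$, and then invoke the fence description of connected components of a finite space. There is nothing to add.
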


\subsection*{bp-retracts} In this subsection we will recall some definitions and results from \cite{cianci2019combinatorial}.

\begin{definition}[{\cite[Definition 3.1]{cianci2019combinatorial}}]
Let $X$ be a finite T$_0$--space and let $A\subseteq X$. We will say that $A$ is a \emph{dbp--retract} (resp.\ \emph{ubp--retract}) of $X$ if $A$ can be obtained from $X$ by successively removing down beat points (resp.\ up beat points), that is, if there exist $n\in\N_0$ and a sequence $X=X_0\supseteq X_1\supseteq\cdots\supseteq X_n=A$ of subspaces of $X$ such that, for all $i\in\{1,\ldots,n\}$, the space $X_i$ is obtained from $X_{i-1}$ by removing a single down beat point (resp.\ up beat point) of $X_{i-1}$.

We will say that $A$ is a \emph{bp--retract} of $X$ if $A$ is either a dbp--retract or a ubp--retract of $X$.
\end{definition}

\begin{theo}[{\cite[Theorem 3.5]{cianci2019combinatorial}}] \label{theo:dbpr_equivalences}
Let $X$ be a finite T$_0$--space, let $A$ be a subspace of $X$ and let $i\colon A\to X$ be the inclusion map. Then, the following propositions are equivalent:
\begin{enumerate}
\item $A$ is a dbp--retract of $X$.
\item There exists a continuous map $f\colon X\to X$ such that $f\leq\id_X$, $f^{2}=f$ and $f(X)=A$.
\item There exists a unique continuous map $f\colon X\to X$ such that $f\leq\id_X$, $f^{2}=f$ and $f(X)=A$.
\item There exists a retraction $r\colon X\to A$ of $i$ such that $ir\leq\id_X$.
\item There exists a unique retraction $r\colon X\to A$ of $i$ such that $ir\leq\id_X$.
\end{enumerate}
\end{theo}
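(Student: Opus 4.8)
The plan is to make condition (2) the hub. First I would record the translation between (2) and (4): an idempotent $f\colon X\to X$ with $f\le\id_X$ and $f(X)=A$ corestricts to a continuous map $r\colon X\to A$ (continuity is automatic since $A$ is a subspace containing $f(X)$), and $f^2=f$ together with $f(X)=A$ forces $f|_A=\id_A$, i.e.\ $r$ is a retraction of $i$; conversely $f:=ir$ recovers the data, and $f\le\id_X$ corresponds to $ir\le\id_X$. Since this is a bijective correspondence, it yields at once $(2)\Leftrightarrow(4)$ and $(3)\Leftrightarrow(5)$. Together with the trivial implications $(3)\Rightarrow(2)$ and $(5)\Rightarrow(4)$, the theorem reduces to proving $(1)\Rightarrow(2)$, $(2)\Rightarrow(1)$, and the uniqueness claim $(2)\Rightarrow(3)$.

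For $(2)\Rightarrow(1)$ --- which I expect to be the crux --- I would induct on $|X\setminus A|$, the case $X=A$ being vacuous. Given $f$ as in (2) with $A\subsetneq X$, choose a point $x$ that is \emph{minimal} in $X\setminus A$; then every element strictly below $x$ lies in $A$, i.e.\ $\widehat U_x\subseteq A$. Since $f(x)\le x$, $f(x)\in A$ and $x\notin A$, we get $f(x)<x$, so $f(x)\in\widehat U_x$; and for any $y\in\widehat U_x$ we have $y\in A$, hence $f(y)=y$, so $y=f(y)\le f(x)$ by monotonicity of $f$. Therefore $f(x)=\max\widehat U_x$ and $x$ is a down beat point of $X$. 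Because $f(X)=A\subseteq X\setminus\{x\}$, the map $f$ restricts to $f'\colon X\setminus\{x\}\to X\setminus\{x\}$, which still satisfies the hypotheses of (2) relative to the same $A$ but with strictly smaller complement; by the inductive hypothesis $A$ is a dbp--retract of $X\setminus\{x\}$, hence of $X$.

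For $(1)\Rightarrow(2)$ I would reverse this construction: given the defining chain $X=X_0\supsetneq\cdots\supsetneq X_n=A$ with $X_k=X_{k-1}\setminus\{x_k\}$ and each $x_k$ a down beat point of $X_{k-1}$, take the standard one-step retractions $r_k\colon X_{k-1}\to X_k$ (sending $x_k$ to $\max\widehat U^{X_{k-1}}_{x_k}$ and fixing everything else), which are order-preserving and satisfy $i_{X_k}r_k\le\id_{X_{k-1}}$. Set $\rho:=r_n\cdots r_1\colon X\to A$ and $f:=i\rho$. Then $\rho i=\id_A$, since no $x_k$ lies in $A$, so $f^2=f$ and $f(X)=A$; and a short induction on $k$, composing the one-step inequalities with the order-preserving maps $r_j$, shows $r_k\cdots r_1\le\id_X$, whence $f\le\id_X$. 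Finally $(2)\Rightarrow(3)$: if $f,g$ both satisfy (2) then $f|_A=g|_A=\id_A$, so $fg=g$ because $g(x)\in A$ for every $x$; then $g(x)\le x$ gives $g(x)=f(g(x))\le f(x)$ by monotonicity of $f$, so $g\le f$, and by symmetry $f=g$.

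The only genuinely non-formal point is the choice in $(2)\Rightarrow(1)$: one must delete a point of $X\setminus A$ that is minimal there, so that its strict down-set is already contained in $A$, and then observe that $f$ itself supplies the required maximum $f(x)=\max\widehat U_x$. Everything else --- the bijection between (2) and (4), the uniqueness argument, and the verification that the composite $\rho$ is simultaneously idempotent after composing with $i$ and bounded above by $\id_X$ --- is routine, using only that continuous maps between finite T$_0$--spaces are exactly the order-preserving maps and that composition preserves $\le$.
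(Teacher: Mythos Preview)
Your proof is correct and follows essentially the same route as the original argument (this theorem is only cited in the present paper, but the proof of its relative version, Theorem~\ref{theo:dbpr_equivalences_sobre_B}, reveals the structure of the original proof). In particular, the crux $(2)\Rightarrow(1)$ is handled identically: one takes a minimal element of $W=\{z\in X:f(z)<z\}$---which, since $f|_A=\id_A$ and $f(X)=A$, is exactly your set $X\setminus A$---and shows that $f(z_0)=\max\widehat U_{z_0}$, so $z_0$ is a down beat point; then one inducts on the complement. Your translations $(2)\Leftrightarrow(4)$, $(3)\Leftrightarrow(5)$ and your uniqueness argument for $(2)\Rightarrow(3)$ are the expected ones.
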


\begin{coro}[{\cite[Corollary 3.11]{cianci2019combinatorial}}]
\label{coro:subseteq_iff_f1_leq_f2}
Let $X$ be a finite T$_0$--space and, for $k=1,2$, let $f_k\colon X\to X$ be a continuous map such that $f_k\circ f_k=f_k$ and $f_k\leq \id_X$. Then $f_1\leq f_2$ if and only if $f_1(X)\subseteq f_2(X)$.
\end{coro}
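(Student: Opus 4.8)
The plan is to exploit two elementary consequences of the standing hypotheses on $f_k$ ($k=1,2$). First, since $X$ is a finite T$_0$--space, hence Alexandroff, each $f_k$ is order-preserving. Second, since $f_k^2=f_k$, the image satisfies $f_k(X)=\{x\in X \tq f_k(x)=x\}$: indeed, if $y=f_k(x)$ then $f_k(y)=f_k^2(x)=f_k(x)=y$, and conversely a fixed point obviously lies in the image. I would state both facts at the outset, as they are the hinge of the whole argument.

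For the implication $f_1\leq f_2\Rightarrow f_1(X)\subseteq f_2(X)$, I would take an arbitrary $y\in f_1(X)$. By the fixed-point description, $f_1(y)=y$, and then the chain $y=f_1(y)\leq f_2(y)\leq y$ — using $f_1\leq f_2$ for the first inequality and $f_2\leq\id_X$ for the second — forces $f_2(y)=y$, so $y\in f_2(X)$.

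For the converse, I would fix $x\in X$. Since $f_1(x)\in f_1(X)\subseteq f_2(X)$, the fixed-point description of $f_2(X)$ gives $f_2\bigl(f_1(x)\bigr)=f_1(x)$. Now $f_1(x)\leq x$ because $f_1\leq\id_X$, and applying the order-preserving map $f_2$ yields $f_1(x)=f_2\bigl(f_1(x)\bigr)\leq f_2(x)$. As $x$ was arbitrary, $f_1\leq f_2$.

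I do not expect a genuine obstacle here: the only point that needs to be invoked carefully — and it is immediate from idempotency — is the identification of $f_k(X)$ with the set of fixed points of $f_k$, which is used in both directions. One could alternatively route the argument through Theorem~\ref{theo:dbpr_equivalences} (each $f_k$ realises $f_k(X)$ as a dbp--retract, and the uniqueness clauses pin down the retractions), but the direct computation above is shorter and self-contained.
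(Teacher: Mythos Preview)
Your proof is correct. Note, however, that the paper does not actually supply its own proof of this statement: it is quoted verbatim as \cite[Corollary 3.11]{cianci2019combinatorial} in the preliminaries, with no argument given. So there is nothing in the present paper to compare your approach against. Your argument is self-contained and elementary, relying only on idempotency (to identify $f_k(X)$ with the fixed-point set of $f_k$) and the order-preserving property of continuous maps between Alexandroff spaces; this is presumably close to how the cited source derives it from the surrounding material on dbp--retracts, and in any case it stands on its own.
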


\subsection*{Topological Grothendieck construction and fiber bundles}

In \cite{cianci2019classification}, we introduced the notion of topological Grothendieck construction of a (covariant) functor from a preordered set to the category of topological spaces, which coincides with the Grothendieck construction if the functor takes values in the subcategory of Alexandroff spaces. In this subsection we will recall the definition of topological Grothendieck construction and some results of \cite{cianci2019classification} that will be needed later.

\begin{definition}[{\cite[Definition 3.1]{cianci2019classification}}]
Let $B$ be an Alexandroff space considered as a preordered set and let $D\colon B\to\Top$ be a functor. We define 
\begin{displaymath}
\int D = \bigcup\limits_{b\in B}\{b\}\times D(b).
\end{displaymath}
For each $b\in B$ and for each $V$ open subset of $D(b)$ we define 
\begin{displaymath}
J(b,V)=\bigcup\limits_{v\in U_b}\{v\}\times D(v\leq b)^{-1}(V).
\end{displaymath}
Let $\B=\{J(b,V):b\in B\text{ and $V$ is an open subset of $D(b)$}\}$. The set $\B$ is a basis for a topology on $\int D$. We consider $\int D$ as a topological space with the topology generated by $\B$. The topological space $\int D$ will be called  the \emph{topological Grothendieck construction} of $D$.
\end{definition}

\begin{prop}[{\cite[Remark 3.8]{cianci2019classification}}]
\label{rema:int_is_functor}
Let $B$ be an Alexandroff space, let $D,E\colon B\to\Top$ be functors and let $\eta\colon D\Rightarrow E$ be a natural transformation. 
Let $\eta_*\colon\int D\to\int E$ be defined by $\eta_*(b,x)=(b,\eta_b(x))$. Then $\eta_*$ is continuous and hence a map over $B$.
\end{prop}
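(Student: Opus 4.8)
The plan is to prove continuity of $\eta_*$ by checking that the preimage of each basic open set of $\int E$ is open in $\int D$, working directly with the basis $\B$ from the definition of the topological Grothendieck construction. Write $J_D(b,W)=\bigcup_{v\in U_b}\{v\}\times D(v\leq b)^{-1}(W)$ for $b\in B$ and $W$ open in $D(b)$, and similarly $J_E(b,V)$ for $b\in B$ and $V$ open in $E(b)$; these families generate the topologies of $\int D$ and $\int E$, so it suffices to show that $\eta_*^{-1}(J_E(b,V))$ is open in $\int D$ for every such $b$ and $V$. Before that, one should observe that $\eta_*$ is well defined, since $(b,x)\in\int D$ means $x\in D(b)$, hence $\eta_b(x)\in E(b)$ and $(b,\eta_b(x))\in\int E$.

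First I would unwind the membership condition: a pair $(b',x)\in\int D$ lies in $\eta_*^{-1}(J_E(b,V))$ exactly when $(b',\eta_{b'}(x))\in J_E(b,V)$, that is, when $b'\in U_b$ and $E(b'\leq b)(\eta_{b'}(x))\in V$. The key step — and essentially the only content of the proof — is to apply naturality of $\eta$ to the arrow $b'\leq b$ of $B$, which gives $E(b'\leq b)\circ\eta_{b'}=\eta_b\circ D(b'\leq b)$. Hence the condition above is equivalent to $b'\in U_b$ and $D(b'\leq b)(x)\in\eta_b^{-1}(V)$. Since $\eta_b\colon D(b)\to E(b)$ is continuous and $V$ is open in $E(b)$, the set $\eta_b^{-1}(V)$ is open in $D(b)$, and the last condition says precisely that $(b',x)\in J_D(b,\eta_b^{-1}(V))$. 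Therefore $\eta_*^{-1}(J_E(b,V))=J_D(b,\eta_b^{-1}(V))$, a basic open subset of $\int D$, and continuity follows.

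Finally, $\eta_*$ is a map over $B$ because the structure morphisms of $\int D$ and $\int E$ are the first-coordinate projections onto $B$, and $\eta_*(b,x)=(b,\eta_b(x))$ preserves the first coordinate, so it commutes with these projections.

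I do not anticipate a genuine obstacle. The identity $\eta_*^{-1}(J_E(b,V))=J_D(b,\eta_b^{-1}(V))$ is forced as soon as both sides are written out, and naturality is exactly the hypothesis that makes the two descriptions coincide; the only thing requiring care is the bookkeeping with the indices $b$, $b'$, $v$ and the directions of the transition maps $D(v\leq b)$ and $E(v\leq b)$.
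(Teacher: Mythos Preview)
Your argument is correct: the identity $\eta_*^{-1}(J_E(b,V))=J_D(b,\eta_b^{-1}(V))$ follows exactly as you say from the naturality square $E(b'\leq b)\circ\eta_{b'}=\eta_b\circ D(b'\leq b)$, and this is the natural way to verify continuity directly from the basis. Note, however, that the paper does not supply a proof of this proposition at all; it is quoted from \cite[Remark 3.8]{cianci2019classification} and stated without argument, so there is no ``paper's own proof'' to compare yours against.
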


\begin{prop}[{\cite[Proposition 3.9]{cianci2019classification}}]
\label{prop:int_D_is_fiber_bundle}
Let $B$ be a connected Alexandroff space and let $D\colon B\to \Top$ be a morphism-inverting functor. Then $\pi_B\colon \int D\to B$ is a fiber bundle over $B$ with fiber $D(b_0)$ for any $b_0\in B$.
\end{prop}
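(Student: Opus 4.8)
The plan is to prove that $\pi_B$ is locally trivial over the basic open cover $\{U_b\}_{b\in B}$ of $B$, with local fiber $D(b)$, and then to use connectedness of $B$ to conclude that all of these fibers are homeomorphic to a single space $F=D(b_0)$, which will serve as the fiber of the bundle.

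Fix $b\in B$. First I would observe that $\pi_B^{-1}(U_b)=\bigcup_{v\le b}\{v\}\times D(v)$ is exactly the basic open set $J(b,D(b))$ of $\int D$, because $D(v\le b)^{-1}(D(b))=D(v)$; in particular $\pi_B$ is continuous and $\pi_B^{-1}(U_b)$ is open. Next, since $U_b$ is open in $B$, a routine check comparing generating basic open sets shows that the subspace topology that $\pi_B^{-1}(U_b)$ inherits from $\int D$ coincides with the topology of $\int(D|_{U_b})$, where $D|_{U_b}\colon U_b\to\Top$ is the restriction of $D$ to the open subspace $U_b=\{x\in B:x\le b\}$; the point is that $U_c$ computed in $U_b$ equals $U_c$ computed in $B$ whenever $c\le b$. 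So it suffices to trivialize $\pi_{U_b}\colon\int(D|_{U_b})\to U_b$ over $U_b$.

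Since $D$ is morphism-inverting, the homeomorphisms $\eta_v=D(v\le b)\colon D(v)\to D(b)$ assemble into a natural transformation $\eta\colon D|_{U_b}\Rightarrow\Delta_{D(b)}$ to the constant functor at $D(b)$: naturality for $w\le v\le b$ is precisely the functoriality identity $D(v\le b)\circ D(w\le v)=D(w\le b)$. By the same identity, the componentwise inverses $D(v\le b)^{-1}$ form a natural transformation $\Delta_{D(b)}\Rightarrow D|_{U_b}$. A direct computation with basic open sets shows that $\int\Delta_{D(b)}=U_b\times D(b)$ with the product topology: the basic open $J(c,V)$ becomes $U_c\times V$, and $\{U_c:c\le b\}$ is a basis of $U_b$. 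Applying Proposition~\ref{rema:int_is_functor} to $\eta$ and to its inverse natural transformation, we obtain continuous maps over $U_b$, namely $\eta_*\colon\int(D|_{U_b})\to U_b\times D(b)$, $(v,x)\mapsto(v,D(v\le b)(x))$, and its visible inverse $(v,y)\mapsto(v,D(v\le b)^{-1}(y))$. Hence $\eta_*$ is a homeomorphism over $U_b$, and composing with the identification of the previous paragraph yields a trivialization $\pi_B^{-1}(U_b)\cong U_b\times D(b)$ over $U_b$.

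It remains to pass from local triviality with these possibly varying fibers $D(b)$ to a fiber bundle with one fixed fiber, and this is where connectedness of $B$ enters. Recall that in a connected Alexandroff space any two points can be joined by a finite fence of pairwise comparable elements: the relation ``joinable by a fence'' is an equivalence relation whose classes are open, since the class of a point $x$ contains $U_x$, so connectedness forces a single class. Given $b,b'\in B$ and such a fence $b=c_0,c_1,\ldots,c_n=b'$, for each $i$ one of $c_{i-1}\le c_i$, $c_i\le c_{i-1}$ holds, and in either case the morphism-inverting hypothesis gives a homeomorphism $D(c_{i-1})\cong D(c_i)$; composing, $D(b)\cong D(b')$. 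Thus, fixing $b_0\in B$ and setting $F=D(b_0)$, we may post-compose each trivialization above with a fixed homeomorphism $D(b)\cong F$ to obtain homeomorphisms $\pi_B^{-1}(U_b)\cong U_b\times F$ over $U_b$; since $\{U_b\}_{b\in B}$ is an open cover of $B$, this exhibits $\pi_B$ as a fiber bundle with fiber $F=D(b_0)$. I expect the main obstacle to be the bookkeeping in the second paragraph — checking that the subspace topology on $\pi_B^{-1}(U_b)$, the topology of $\int(D|_{U_b})$, and the product topology on $U_b\times D(b)$ all agree by comparing their bases — after which everything is formal given Proposition~\ref{rema:int_is_functor}; alternatively one may skip the functorial packaging and verify directly that $(v,x)\mapsto(v,D(v\le b)(x))$ and its inverse are continuous, which amounts to the same basic-open computations, using $D(v\le b)=D(c\le b)\circ D(v\le c)$ for $v\le c\le b$.
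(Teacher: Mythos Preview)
Your argument is correct and is the standard proof of this fact: trivialize over each basic open $U_b$ via the natural isomorphism $D|_{U_b}\Rightarrow\Delta_{D(b)}$ given by the maps $D(v\le b)$, then use connectedness (via fences) to identify all fibers. Note, however, that the paper does not itself prove this proposition --- it is quoted verbatim as \cite[Proposition 3.9]{cianci2019classification} --- so there is no in-paper proof to compare against; your write-up is what one would expect such a proof to look like, and the invocation of Proposition~\ref{rema:int_is_functor} to handle continuity of $\eta_*$ and its inverse is exactly the intended use of that result.
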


\begin{theo}[{\cite[Theorem 3.20]{cianci2019classification}}]
\label{theo:classification_fiber_bundles_Grothendieck_construction}
Let $B$ be a connected Alexandroff space and let $F$ be any T$_0$--space. Then there exists a canonical bijection between isomorphism classes of fiber bundles over $B$ with fiber $F$ and isomorphism classes of functors from $B$ to $\aut(F)$. This bijection is induced by the canonical representation and its inverse is induced by the topological Grothendieck construction.
\end{theo}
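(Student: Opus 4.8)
The plan is to construct two mutually inverse assignments on isomorphism classes. From a functor $D\colon B\to\aut(F)$ --- equivalently, a morphism-inverting functor $B\to\Top$ all of whose values are homeomorphic to $F$ --- one forms $\pi_B\colon\int D\to B$, which is a fiber bundle with fiber $F$ by \ref{prop:int_D_is_fiber_bundle} (here the connectedness of $B$ is used); and a natural isomorphism $D\Rightarrow D'$ yields, via \ref{rema:int_is_functor} applied to it and to its inverse, an isomorphism $\int D\cong\int D'$ over $B$, so this descends to isomorphism classes. Conversely, from a fiber bundle $p\colon E\to B$ one builds its \emph{canonical representation} $\rho_p\colon B\to\Top$: on objects $\rho_p(b)=E_b:=p^{-1}(b)$, and on a morphism $b\leq b'$ a \emph{transport homeomorphism} $\tau_{bb'}\colon E_b\to E_{b'}$, defined as follows. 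Since $\{U_c\}_{c\in B}$ is a basis of $B$ and $p$ is locally trivial, $p$ restricts to a trivial bundle over $U_{b'}$; choosing a trivialization $p^{-1}(U_{b'})\cong U_{b'}\times F$, the fiberwise-constant map onto $E_{b'}$ is a retraction $r\colon p^{-1}(U_{b'})\to E_{b'}$ with $ir\geq\id$, and one sets $\tau_{bb'}=r|_{E_b}$.

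The first key point --- and where I expect the main obstacle to lie --- is that $\tau_{bb'}$ does not depend on the chosen trivialization, so that $\rho_p$ is well defined. Two trivializations over $U_{b'}$ differ by a transition map $U_{b'}\to\aut(F)$, and a direct computation shows that $\tau_{bb'}$ is unchanged exactly when the values of this map at $b$ and at $b'$ agree; reading everything inside $F$, this reduces to the rigidity statement that a homeomorphism $h\colon F\to F$ with $h\leq\id_F$ equals $\id_F$. For finite $F$ this follows from the orbit argument (along any $h$-orbit $x\geq h(x)\geq h^2(x)\geq\cdots$ all terms coincide, so $h=\id_F$), or from \ref{prop_stong} together with \ref{theo:dbpr_equivalences}; the general $T_0$ case needs a little more care with the topology carried by the transition functions. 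Granting this, $\tau_{bb}=\id$ and $\tau_{b'b''}\tau_{bb'}=\tau_{bb''}$ for $b\leq b'\leq b''$ are checked inside a single trivialization over $U_{b''}$, so $\rho_p$ is a functor; it is morphism-inverting because each $\tau_{bb'}$ is a homeomorphism, and (again using connectedness) a choice of homeomorphisms $E_b\cong F$ turns $\rho_p$ into a functor $B\to\aut(F)$ whose isomorphism class is independent of the choices. One also checks that an isomorphism of bundles over $B$ induces a natural isomorphism of canonical representations, so this assignment too descends to isomorphism classes.

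It then remains to verify the two round trips. For $\rho_{\int D}\cong D$: the fiber of $\int D$ over $b$ is $\{b\}\times D(b)$, homeomorphic to $D(b)$ via $(b,x)\mapsto x$; the trivialization of $\int D$ over $U_{b'}$ is $(v,x)\mapsto(v,D(v\leq b')(x))$ (this is precisely the trivialization produced in the proof of \ref{prop:int_D_is_fiber_bundle}), and pulling the fiberwise-constant retraction back through it gives $\tau_{bb'}(b,x)=(b',D(b\leq b')(x))$, so that $\tau_{bb'}$ corresponds to $D(b\leq b')$ and the homeomorphisms above assemble into a natural isomorphism $\rho_{\int D}\Rightarrow D$. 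For $\int\rho_p\cong p$: the natural candidate is the bijection $(b,x)\mapsto x$ over $B$ between the underlying sets of $\int\rho_p$ and $E$; to see it is a homeomorphism one compares the basic open sets $J(b',V)$ of $\int\rho_p$ with the topology of $E$ through a local trivialization over $U_{b'}$, under which $J(b',V)$ corresponds to a box $U_{b'}\times V_0$ (open in $E$), while conversely every basic open of $E$ pulls back to a union of sets $J(c,V_c)$. This comparison is the other delicate part, precisely because $F$ need not be Alexandroff --- hence neither need $\int\rho_p$ nor $E$ be --- so one cannot reduce to minimal open neighbourhoods; it is here that the particular shape of the topology on the topological Grothendieck construction earns its keep. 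Bijectivity of the resulting correspondence, and the fact that it is the canonical one induced by the Grothendieck construction and by the canonical representation, then follow formally.
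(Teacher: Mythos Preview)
This theorem is not proved in the present paper: it is quoted without proof from \cite[Theorem~3.20]{cianci2019classification} in the preliminary section, so there is no proof here to compare your proposal against. Your outline follows what one would expect the argument in that reference to look like --- build the Grothendieck construction in one direction and the canonical representation in the other, then check the two round trips --- and you have correctly located the two genuinely delicate points: the trivialization-independence of the transport maps $\tau_{bb'}$ (which for general $T_0$ fiber $F$ cannot simply be reduced to the finite-poset orbit argument, since $\aut(F)$ with the compact-open topology need not be Alexandroff and the transition map $U_{b'}\to\aut(F)$ need not be order-preserving in any naive sense), and the comparison of topologies in the isomorphism $\int\rho_p\cong p$ when $F$, and hence $E$, is not Alexandroff. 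To assess whether your handling of these two points matches the original argument, you would need to consult \cite{cianci2019classification} directly; the present paper gives you no foothold.
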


\begin{coro}[{\cite[Corollary 3.21]{cianci2019classification}}] \label{coro_fiber_bundle_with_simply_connected_base} 
Let $B$ be a simply connected Alexandroff space, let $F$ be any T$_0$--space and let $p$ be a fiber bundle over $B$ with fiber $F$. Then $p$ is a trivial fiber bundle.
\end{coro}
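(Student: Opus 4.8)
The plan is to reduce the statement to the classification in Theorem~\ref{theo:classification_fiber_bundles_Grothendieck_construction}. Since a simply connected space is in particular connected, that theorem applies and puts isomorphism classes of fiber bundles over $B$ with fiber $F$ in bijection with isomorphism classes of functors from $B$ to $\aut(F)$, the group $\aut(F)$ being regarded as a one-object category. Let $D\colon B\to\aut(F)$ be a functor corresponding to $p$, and let $c\colon B\to\aut(F)$ be the constant functor with $c(b\le b')=\id_F$ for all $b\le b'$. Inspecting the definition of the topological Grothendieck construction, for $c$ one obtains $J(b,V)=U_b\times V$, so $\B$ is a basis for the product topology and $\int c$ is, over $B$, the trivial bundle $B\times F\to B$; hence the trivial bundle corresponds to the class of $c$. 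As the correspondence in Theorem~\ref{theo:classification_fiber_bundles_Grothendieck_construction} is a bijection, it therefore suffices to prove that $D$ is naturally isomorphic to $c$ (equivalently, once $D\cong c$ one gets $\int D\cong\int c$ over $B$ from Proposition~\ref{rema:int_is_functor}).

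To build a natural isomorphism $\eta\colon c\Rightarrow D$ we need a family $(\eta_b)_{b\in B}$ of self-homeomorphisms of $F$ with $D(b\le b')=\eta_{b'}\circ\eta_b^{-1}$ for every relation $b\le b'$ of $B$; any such family is automatically natural, and it is a natural isomorphism since every $\eta_b$ is invertible. Fix a basepoint $b_0\in B$ and set $\eta_{b_0}=\id_F$. Since $B$ is connected, each $b\in B$ is joined to $b_0$ by a fence $b_0=x_0,x_1,\dots,x_n=b$ with consecutive terms comparable; put $\eta_b=\varepsilon_n\circ\cdots\circ\varepsilon_1$, where $\varepsilon_i=D(x_{i-1}\le x_i)$ when $x_{i-1}\le x_i$ and $\varepsilon_i=D(x_i\le x_{i-1})^{-1}$ when $x_i\le x_{i-1}$. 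Granting that this is independent of the chosen fence, naturality is immediate: appending the step $b\le b'$ to a fence from $b_0$ to $b$ produces a fence from $b_0$ to $b'$, whence $\eta_{b'}=D(b\le b')\circ\eta_b$.

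The substantial point — which I expect to be the main obstacle — is the well-definedness of $\eta_b$, and this is exactly where the hypothesis of simple connectedness is used. It suffices to show that every fence-loop at $b_0$ gives the identity of $F$. Because $D$ is a functor on the poset $B$, the product $\varepsilon_n\circ\cdots\circ\varepsilon_1$ is unchanged when a segment $x_{i-1}\le x_i\le x_{i+1}$ of the fence is replaced by $x_{i-1}\le x_{i+1}$ (and symmetrically), and when a backtracking pair is deleted; these are precisely the moves in the edge-path description of the fundamental group of the order complex $\mathcal{K}(B)$, so the assignment descends to a homomorphism $\pi_1(\mathcal{K}(B),b_0)\to\aut(F)$. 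By McCord's theorem (in its version for Alexandroff spaces), $\pi_1(\mathcal{K}(B),b_0)\cong\pi_1(B,b_0)$, which is trivial since $B$ is simply connected; hence every fence-loop is sent to $\id_F$, $\eta_b$ is well defined, and $\eta\colon c\Rightarrow D$ is a natural isomorphism. By the reduction above, $p$ is then isomorphic over $B$ to $B\times F$, i.e.\ $p$ is a trivial fiber bundle. (If the notion of simple connectedness used in the paper is phrased directly via the order complex, the appeal to McCord's theorem can be omitted.)
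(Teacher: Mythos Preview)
The paper does not actually prove this corollary here; it is quoted verbatim as a preliminary result from \cite{cianci2019classification}, so there is no proof in the present article to compare against.

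That said, your argument is correct and is the natural way to deduce the statement from Theorem~\ref{theo:classification_fiber_bundles_Grothendieck_construction}. The reduction to showing that any functor $D\colon B\to\aut(F)$ is naturally isomorphic to the constant one is exactly right, and your monodromy construction along fences is the standard way to produce the required natural isomorphism. The only point that deserves a word of care is the claim that the product $\varepsilon_n\cdots\varepsilon_1$ is invariant under the edge-path moves: the relevant relations in $\pi_1(\mathcal K(B))$ come from $2$--simplices of the order complex, i.e.\ chains $a<b<c$, and for these functoriality of $D$ gives $D(b\le c)D(a\le b)=D(a\le c)$, which (together with the obvious compatibility with inverses) is precisely what you need; your parenthetical ``and symmetrically'' should be read as covering all orientations of a $2$--simplex, not arbitrary zig-zags. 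With that understood, the map descends to a homomorphism $\pi_1(\mathcal K(B),b_0)\to\aut(F)$, McCord's theorem identifies this source with $\pi_1(B,b_0)$, and simple connectedness finishes the job.
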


\subsection*{Fibrations}

Recall that a continuous map $p\colon E\to B$ between topological spaces is called a \emph{(Hurewicz) fibration} if it has the homotopy lifting property with respect to all topological spaces, that is, if for all topological spaces $X$ and continuous maps $f\colon X\to E$ and $H\colon X\times I\to B$ such that $Hi_0=pf$ there exists a continuous map $\widetilde H\colon X\times I \to E$ such that $p\widetilde H = H$ and $\widetilde Hi_0=f$.

Similarly, a continuous map $p\colon E\to B$ between topological spaces is called a \emph{Serre fibration} if it has the homotopy lifting property with respect to the $n$--dimensional disks $D^n$ for all $n\in\N_0$.

It is well known that compositions, products, pullbacks and retracts of fibrations are fibrations. Moreover, if $p\colon E\to B$ is a fibration and $X$ is an exponentiable space, then $p^{X}\colon E^{X}\to B^{X}$ is also a fibration. In particular, if $p\colon E\to B$ is a fibration and $X$ is a finite space, then $p^{X}\colon E^{X}\to B^{X}$ is also a fibration.

\begin{definition}
Let $E$ and $B$ be topological spaces and let $p\colon E\to B$ be a continuous map. Let $E\times_p B^{I}=\{(e,\gamma)\in E\times B^I \tq \gamma(0)=p(e) \}$ and let $\pi_E\colon E\times_p B^{I} \to E$ and $\pi_{B^I}\colon E\times_p B^{I} \to B^I$ denote the corresponding projection maps.
A \emph{path-lifting map for $p$} is a continuous map $\Lambda\colon E\times_p B^{I}\to E^{I}$ such that $\ev_0\Lambda=\pi_E$ and $p^{I}\Lambda=\pi_{B^{I}}$.
\end{definition}

From the exponential law it follows that a continuous map $p\colon E\to B$ is a (Hurewicz) fibration if and only if $p$ admits a path-lifting map.

\begin{definition}\label{defi:gamma_[0,t]}
Let $X$ be a topological space, let $\gamma\colon I \to X$ be a path in $X$ and let $t_0,t_1\in I$ be such that $0\leq t_0\leq t_1\leq 1$. We define the map $\gamma_{[t_0,t_1]}\colon I\to X$ by $\gamma_{[t_0,t_1]}(s)=\gamma(t_0+(t_1-t_0)s)$ for all $s\in I$. Note that $\gamma_{[t_0,t_1]}$ is a continuous map and hence, a path in $X$. 
\end{definition}

\begin{definition}\label{defi:eta}
Let $X$ be an Alexandroff space and let $x,y\in B$ be such that $x\leq y$. We define the path $\eta(x\leq y)\colon I\to B$ by
\begin{displaymath}
\eta(x\leq y)(t)=\begin{cases} x&\text{ if $t<1$,}\\ y&\text{ if $t=1$.}\end{cases}
\end{displaymath}
We define, on the other hand, the path $\eta(y\geq x)\colon I\to B$ by
\begin{displaymath}
\eta(y\geq x)(t)=\begin{cases} y&\text{ if $t=0$,}\\ x&\text{ if $t>0$.}\end{cases}
\end{displaymath}
Thus, the path $\eta(y\geq x)$ is the inverse path of $\eta(x\leq y)$.
\end{definition}

\subsection*{Grothendieck fibrations}

\begin{definition}
Let $E$ and $B$ be small categories and let $p\colon E\to B$ be a funtor.
\begin{itemize}
\item Let $f\colon e'\to e$ be an arrow of $E$. We say that $f$ is \emph{cartesian (with respect to $p$)} if for every arrow $g\colon e''\to e$ of $E$ and every arrow $h\colon p(e'')\to p(e')$ of $B$ such that $p(g)=p(f)h$ there exists a unique arrow $\tilde{h}\colon e''\to e'$ such that $f\tilde{h}=g$ and $p\big(\tilde{h}\big)=h$.
\begin{center}
\begin{tikzpicture}[x=2cm,y=2cm]
\tikzstyle{every node}=[font=\footnotesize]
\draw (0,0) node (e''){$e''$};
\draw (0,1) node (e'){$e'$};
\draw (1,1) node (e){$e$};
\draw[\arr] (e')->(e) node [midway,above] {$f$};
\draw[\arr] (e'')->(e) node [midway,below right] {$g$};
\draw[dashed, \arr] (e'')->(e') node [midway,left] {$\tilde{h}$};
\draw (1.5,0.5) node(s){};
\draw (2.5,0.5) node(t){};
\draw[|->] (s)--(t) node[midway, below]{$p$};
\draw (3,0) node (pe''){$p(e'')$};
\draw (3,1) node (pe'){$p(e')$};
\draw (4,1) node (pe){$p(e)$};
\draw[\arr] (pe')->(pe) node [midway,above] {$p(f)$};
\draw[\arr] (pe'')->(pe) node [midway,below right] {$p(g)$};
\draw[\arr] (pe'')->(pe') node [midway,left] {$h$};
\end{tikzpicture}
\end{center}
\item Let $f\colon e\to e'$ be an arrow of $E$. We say that $f$ is \emph{cocartesian (with respect to $p$)} if for every arrow $g\colon e\to e''$ of $E$ and every arrow $h\colon p(e')\to p(e'')$ of $B$ such that $p(g)=hp(f)$ there exists a unique arrow $\tilde{h}\colon e'\to e''$ such that $\tilde{h}f=g$ and $p\big(\tilde{h}\big)=h$.
\begin{center}
\begin{tikzpicture}[x=2cm,y=2cm]
\tikzstyle{every node}=[font=\footnotesize]
\draw (1,0) node (e''){$e''$};
\draw (0,1) node (e){$e$};
\draw (1,1) node (e'){$e'$};
\draw[\arr] (e)->(e') node [midway,above] {$f$};
\draw[\arr] (e)->(e'') node [midway,below left] {$g$};
\draw[dashed, \arr] (e')->(e'') node [midway,right] {$\tilde{h}$};
\draw (1.5,0.5) node(s){};
\draw (2.5,0.5) node(t){};
\draw[|->] (s)--(t) node[midway, below]{$p$};
\draw (4,0) node (pe''){$p(e'')$};
\draw (3,1) node (pe){$p(e)$};
\draw (4,1) node (pe'){$p(e')$};
\draw[\arr] (pe)->(pe') node [midway,above] {$p(f)$};
\draw[\arr] (pe)->(pe'') node [midway,below left] {$p(g)$};
\draw[\arr] (pe')->(pe'') node [midway,right] {$h$};
\end{tikzpicture}
\end{center}
\end{itemize}
\end{definition}

\begin{definition}\label{defi:fib_grothendieck}
Let $E$ and $B$ be small categories and let $p\colon E\to B$ be a functor. We say that $p$ is a \emph{Grothendieck fibration} if for all $e\in E$, all $b\in B$ and all arrows $f\colon b\to p(e)$, there exists $e'\in p^{-1}(b)$ and a cartesian arrow $\tilde{f}\colon e'\to e$ such that $p\big(\tilde{f}\big)=f$. The arrow $\tilde{f}$ is called \emph{cartesian lift of $f$ to $e$}. 

Dually, we say that $p$ is a \emph{Grothendieck opfibration} if $p^{\op}$ is a Grothendieck fibration. In other words, $p$ is a Grothendieck opfibration if for all $e\in E$, all $b\in B$ and all arrows $f\colon p(e)\to b$, there exists $e'\in p^{-1}(b)$ and a cocartesian arrow $\tilde{f}\colon e\to e'$ such that $p\big(\tilde{f}\big)=f$. The arrow $\tilde{f}$ is called \emph{cocartesian lift of $f$ from $e$}. 

We say that $p$ is a \emph{Grothendieck bifibration} if $p$ is both a Grothendieck fibration and a Grothendieck opfibration.
\end{definition}

\begin{definition}
Let $E$ and $B$ be small categories and let $p\colon E\to B$ be a functor. A \emph{cleavage (for $p$)} is a map $\phi$ that assigns to each $e\in E_0$ and each arrow $f\colon b\to p(e)$ in $B$, a cartesian lift $\phi^{f}_e$ of $f$ to $e$. We say that a cleavage $\phi$ is \emph{closed} if it preserves identity maps and compositions, that is, $\phi_e^{\id_{p(e)}}=\id_e$ for all $e\in E$ and $\phi^{fg}_e=\phi_e^{f}\phi^{g}_{e'}$ for all $e\in E_0$, for all arrows $f\colon b'\to p(e)$ and $g\colon b''\to b'$ in $B$, where $e'$ is the domain of $\phi^{f}_e$.

Dually, an \emph{opcleavage (for $p$)} is a map $\phi$ that assigns to each $e\in E_0$ and each arrow $f\colon p(e)\to b$ in $B$, a cocartesian lift $\phi^{f}_e$ of $f$ from $e$. We say that the opcleavage $\phi$ is \emph{closed} if it preserves identity maps and compositions, that is, $\phi_e^{\id_{p(e)}}=\id_e$ for all $e\in E$ and $\phi^{gf}_e=\phi_{e'}^{g}\phi^{f}_{e}$ for all $e\in E_0$ and for all arrows $f\colon b\to b'$ and $g\colon b'\to b''$ in $B$, where $e'$ is the codomain of $\phi^{f}_e$. 
\end{definition}

Observe that assuming the axiom of choice, a functor between small categories is a Grothendieck fibration if and only if it admits a cleavage and it is a Grothendieck opfibration if and only if it admits an opcleavage.

\begin{definition}
A \emph{split (Grothendieck) fibration} is a pair $(p,\phi)$ where $p$ is a Grothendieck fibration and $\phi$ is a cleavage for $p$.

Dually, a \emph{split (Grothendieck) opfibration} is a pair $(p,\phi)$ where $p$ is a Grothendieck opfibration and $\phi$ is an opcleavage for $p$.
\end{definition}

\begin{theo}\label{theo:pi_B_iso_p_sobre_B}
Let $E$ and $B$ be small categories and let $p\colon E\to B$ be a split Grothendieck fibration. Then there exists a contravariant functor $F_p\colon B\to \cat$ such that the canonical projection $\pi^{F_p}_B\colon \int F_p\to B$ is an object over $B$ isomorphic to $p$. 
\end{theo}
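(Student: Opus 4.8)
The plan is to perform the Grothendieck construction explicitly in the notation of this section and then to exhibit a concrete isomorphism over $B$. First I would define the contravariant functor $F_p\colon B\to\cat$ on objects by $F_p(b)=p^{-1}(b)$, the fiber of $p$ over $b$, regarded as the (in general not full) subcategory of $E$ whose objects are the $e\in E_0$ with $p(e)=b$ and whose arrows are the arrows $u$ of $E$ with $p(u)=\id_b$. On an arrow $f\colon b\to b'$ of $B$ I would set $F_p(f)\colon F_p(b')\to F_p(b)$ equal to $\operatorname{dom}\phi^{f}_{e'}$ on an object $e'$, and, on an arrow $u\colon e'_1\to e'_2$ of $F_p(b')$, equal to the unique arrow $F_p(f)(e'_1)\to F_p(f)(e'_2)$ lying over $\id_b$ that the cartesian property of $\phi^{f}_{e'_2}$ produces from the arrow $u\,\phi^{f}_{e'_1}$ (which lies over $f=p(\phi^{f}_{e'_2})\,\id_b$).

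Next I would check that $F_p$ is well defined as a functor into $\cat$. That each $F_p(f)$ preserves identities and composition of arrows in a fiber is immediate from the \emph{uniqueness} part of the definition of cartesian arrow. That $F_p$ itself preserves identities and composition is exactly the point where I would use that the cleavage $\phi$ is closed: from $\phi^{\id_{p(e)}}_{e}=\id_{e}$ I get $F_p(\id_b)=\id_{F_p(b)}$, and from $\phi^{fg}_{e}=\phi^{f}_{e}\,\phi^{g}_{e'}$ I get $F_p(fg)=F_p(g)\,F_p(f)$; in both cases the statement on objects is read off directly (e.g.\ $\operatorname{dom}\phi^{fg}_{e}=\operatorname{dom}\phi^{g}_{\operatorname{dom}\phi^{f}_{e}}$) and the statement on arrows follows once more from the uniqueness of cartesian factorizations.

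Then I would build the comparison isomorphism. Recall that an object of $\int F_p$ is a pair $(b,e)$ with $b\in B_0$ and $e$ an object of $F_p(b)=p^{-1}(b)$, that a morphism $(b,e)\to(b',e')$ is a pair $(f,\alpha)$ with $f\colon b\to b'$ in $B$ and $\alpha\colon e\to F_p(f)(e')$ an arrow of $F_p(b)$, and that $\pi^{F_p}_B$ sends $(b,e)\mapsto b$ and $(f,\alpha)\mapsto f$. I would define $\Phi\colon\int F_p\to E$ by $\Phi(b,e)=e$ and $\Phi(f,\alpha)=\phi^{f}_{e'}\circ\alpha$, so that $p\Phi=\pi^{F_p}_B$ holds on the nose. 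Preservation of identities by $\Phi$ uses $\phi^{\id_b}_{e}=\id_{e}$; preservation of composition reduces to the identity $\phi^{gf}_{e''}\circ F_p(f)(\beta)=\phi^{g}_{e''}\circ\beta\circ\phi^{f}_{e'}$, which I would obtain by combining the closedness relation $\phi^{gf}_{e''}=\phi^{g}_{e''}\,\phi^{f}_{\operatorname{dom}\phi^{g}_{e''}}$ with the equation defining $F_p(f)(\beta)$. Finally $\Phi$ is bijective on objects (the pair $(b,e)$ is determined by $e$, since necessarily $b=p(e)$) and fully faithful: given $\psi\colon e_1\to e_2$ in $E$ and $f=p(\psi)$, the cartesian property of $\phi^{f}_{e_2}$ applied to $\psi$ (which lies over $f=f\,\id_{p(e_1)}$) yields the unique arrow $\alpha$ over $\id_{p(e_1)}$ with $\phi^{f}_{e_2}\circ\alpha=\psi$, hence the unique morphism $(f,\alpha)$ of $\int F_p$ with $\Phi(f,\alpha)=\psi$ (uniqueness of the first component being automatic from $p\Phi=\pi^{F_p}_B$). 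A fully faithful functor which is a bijection on objects is an isomorphism of categories, and since $p\Phi=\pi^{F_p}_B$ this makes $\Phi$ an isomorphism in $\cat/B$; in particular $\pi^{F_p}_B$ and $p$ are isomorphic as objects over $B$.

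The conceptual content is entirely carried by the universal property of cartesian arrows, so I do not expect a genuinely hard step; the main work — and the place one must be careful — is the bookkeeping: keeping the contravariant variance straight and verifying at every stage that the chosen cartesian lifts are compatible with composition, which is precisely what the closedness of $\phi$ guarantees (without it $F_p$ would only be a pseudofunctor, so I would keep that hypothesis in force throughout).
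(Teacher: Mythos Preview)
The paper does not supply a proof of this theorem; it is stated in the preliminaries as a known classical fact (and the paper later points to \cite{johnstone2002sketches} and \cite{gray1966fibred} for the equivalence between split fibrations and $\cat$--valued functors), so there is nothing to compare against. Your argument is the standard construction and is correct: the fiber functor $F_p$ is well defined precisely because the cleavage is closed, and your $\Phi$ is the usual comparison isomorphism; the one step worth writing out carefully is the composition law $\Phi\big((g,\beta)\circ(f,\alpha)\big)=\Phi(g,\beta)\circ\Phi(f,\alpha)$, but your reduction to $\phi^{gf}_{e''}\circ F_p(f)(\beta)=\phi^{g}_{e''}\circ\beta\circ\phi^{f}_{e'}$ via closedness plus the defining equation of $F_p(f)(\beta)$ does go through.
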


\section{Preliminary results}

In this section we will prove two propositions that will allow us to deduce the continuity of a map whose codomain is an Alexandroff space from the continuity of other that is comparable with the first one. These propositions can be considered as `pasting lemmas' for maps of this type. They will be used in the following sections to prove that certain path lifting maps are continuous.

To this end, we will use the following lemmas.

\begin{lemma}\label{lemm:KVW_abto}
Let $X$ be a topological space, let $V\subseteq X$ be an open subspace, let $K\subseteq X$ be a closed subspace, and let $W\subseteq V$ be such that $K\cap W$ is open in $K$. Then
\[
(K^{c}\cap V)\cup (K\cap W)
\]
is open in $X$.
\end{lemma}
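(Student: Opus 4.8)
The plan is to prove that $(K^c \cap V) \cup (K \cap W)$ is open in $X$ by exhibiting, around each point $x$ of this set, an open neighbourhood contained in the set. Since $X$ need not be Alexandroff in this lemma, I would not look for a minimal neighbourhood but instead argue pointwise using the hypotheses directly.

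\textbf{Case analysis.} First suppose $x \in K^c \cap V$. Then $X \setminus K$ is open (as $K$ is closed) and $V$ is open, so $U := (X \setminus K) \cap V$ is an open neighbourhood of $x$. Moreover $U \subseteq K^c \cap V$, which is a subset of our set, so we are done in this case. Second, suppose $x \in K \cap W$. Since $K \cap W$ is open in $K$, there is an open set $O \subseteq X$ with $K \cap W = K \cap O$; in particular $x \in O$. Because $W \subseteq V$ and $V$ is open, the set $O \cap V$ is an open neighbourhood of $x$ in $X$. The key step is to check that
\[
O \cap V \subseteq (K^c \cap V) \cup (K \cap W).
\]
Indeed, take $y \in O \cap V$. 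If $y \notin K$, then $y \in K^c \cap V$. If $y \in K$, then $y \in K \cap O = K \cap W$. Either way $y$ lies in the target set, which establishes the inclusion.

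\textbf{Conclusion.} Having produced, for every point of $(K^c \cap V) \cup (K \cap W)$, an open neighbourhood contained in it, the set is open in $X$, as claimed.

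\textbf{Main obstacle.} There is no serious obstacle here; the only point requiring a little care is the second case, where one must remember that ``$K \cap W$ open in $K$'' only supplies a set $O$ open in the ambient $X$ with $K \cap O = K \cap W$ (and $W$ itself need not be open in $X$), so intersecting with the genuinely open set $V$ is what makes the neighbourhood ambient-open while the containment $W \subseteq V$ is exactly what is needed to control the points of $O \cap V$ that happen to lie in $K$.
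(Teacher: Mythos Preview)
Your proof is correct and uses essentially the same idea as the paper: pick an open set $O$ in $X$ with $K\cap O = K\cap W$ and observe that $O\cap V$ is the relevant open neighbourhood. The only cosmetic difference is that the paper carries out a global set-theoretic computation, showing directly that $(K^{c}\cap V)\cup(K\cap W)=(K^{c}\cap V)\cup(O\cap V)$, whereas you verify openness pointwise; the content is the same.
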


\begin{proof}
Since $K\cap W$ is open in $K$, there exists an open subset $U$ of $X$ such that $K\cap W=K\cap U$. In particular, as $W\subseteq V$, we have that $K\cap W=K\cap W\cap V=K\cap U\cap V$.
Hence,
\begin{align*}
(K^{c}\cap V)\cup(K\cap W)&=(K^{c}\cap V)\cup(K\cap U\cap V)=\\&=(K^{c}\cap V)\cup(K^{c}\cap U\cap V)\cup(K\cap U\cap V)=(K^{c}\cap V)\cup (U\cap V).
\end{align*}

Since $K^{c}$, $U$ and $V$ are open subsets of $X$, the result follows.
\end{proof}

\begin{lemma}\label{lemm:KVW_cerrado}
Let $X$ be a topological space, let $V\subseteq X$ be an open subspace, let $K\subseteq X$ be a closed subspace, and let $W\subseteq K$ be such that $V\cap W$ is closed in $V$. Then
\[
(V^{c}\cap K)\cup (V\cap W)
\]
is closed in $X$.
\end{lemma}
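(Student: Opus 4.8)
The plan is to mimic the proof of Lemma~\ref{lemm:KVW_abto} almost verbatim, exploiting the evident formal duality between the two statements: where the previous lemma took the union of the ``open part away from $K$'' with an ``open-in-$K$ part of $W$'', this one takes the union of the ``closed part inside $K$'' with a ``closed-in-$V$ part of $W$'', and the roles of open and closed, of $V$ and $K$, are interchanged. Concretely, the hypothesis ``$V\cap W$ is closed in $V$'' should be unpacked exactly as in the open case: there is a closed subset $C$ of $X$ with $V\cap W = V\cap C$, and since $W\subseteq K$ we may intersect with $K$ to get $V\cap W = V\cap W\cap K = V\cap C\cap K$.

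First I would record that reformulation of the hypothesis. Then I would expand the set in question using a partition-of-unity-style algebraic manipulation on the complement of $V$: writing
\[
(V^{c}\cap K)\cup(V\cap W) = (V^{c}\cap K)\cup(V\cap C\cap K),
\]
and then inserting the trivially true identity $V^{c}\cap C\cap K\subseteq V^{c}\cap K$ to rewrite this as
\[
(V^{c}\cap K)\cup(V^{c}\cap C\cap K)\cup(V\cap C\cap K) = (V^{c}\cap K)\cup(C\cap K),
\]
since $(V^{c}\cap C\cap K)\cup(V\cap C\cap K) = C\cap K$. Finally I would observe that $V^{c}$, $C$ and $K$ are all closed subsets of $X$, so the displayed set is a union of two closed sets and hence closed, which is the desired conclusion.

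I do not anticipate any genuine obstacle here; the whole content is the one bookkeeping identity $V\cap W = V\cap C\cap K$ together with the set-theoretic simplification, both of which are elementary. The only point requiring a moment's care is making sure the direction of the duality is right — that it is $V^{c}$ (not $K^{c}$) that appears, and that the hypothesis needed is ``$V\cap W$ closed in $V$'' rather than ``$V\cap W$ closed in $K$'' — but this is exactly dictated by the statement, and the computation above goes through with those choices. One could alternatively argue by complements, deducing this lemma from Lemma~\ref{lemm:KVW_abto} applied to suitable complementary sets, but the direct computation is shorter and self-contained, so I would present that.
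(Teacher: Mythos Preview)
Your proposal is correct and is exactly the dual of the proof of Lemma~\ref{lemm:KVW_abto}, which is precisely what the paper does: its proof of Lemma~\ref{lemm:KVW_cerrado} consists solely of the remark that it is analogous to that of Lemma~\ref{lemm:KVW_abto}. Your written-out version is the intended argument.
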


\begin{proof}
The proof of this lemma is analogous to the proof of \ref{lemm:KVW_abto}.
\end{proof}

\begin{prop}\label{prop:f_leq_g_g_cont}
Let $X$ be a topological space and let $Y$ be an Alexandroff space. Let $K\subseteq X$ be a closed subspace and let $f,g\colon X\to Y$ be two maps. If
\begin{enumerate}
\item $f$ is continuous,
\item $f\leq g$,
\item $f|_{K^{c}}=g|_{K^{c}}$, and
\item $g|_K$ is continuous
\end{enumerate}
then $g$ is continuous.
\end{prop}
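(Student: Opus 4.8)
The plan is to verify directly that $g^{-1}(U_y)$ is open in $X$ for every $y\in Y$; since $\{U_y\tq y\in Y\}$ is a basis for the topology of the Alexandroff space $Y$, this suffices to conclude that $g$ is continuous.

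Fix $y\in Y$ and recall that $U_y=\{z\in Y\tq z\leq y\}$ is a down-set. The key observation is that hypothesis (2), together with the fact that $U_y$ is a down-set, yields the inclusion $g^{-1}(U_y)\subseteq f^{-1}(U_y)$: indeed, if $g(x)\in U_y$ then $f(x)\leq g(x)\leq y$, so $f(x)\in U_y$. Now set $V=f^{-1}(U_y)$, which is open in $X$ by hypothesis (1), and set $W=g^{-1}(U_y)$, so that $W\subseteq V$. Since $g|_K$ is continuous by hypothesis (4), the set $K\cap W=(g|_K)^{-1}(U_y)$ is open in $K$. Hence Lemma \ref{lemm:KVW_abto} applies with these choices of $V$, $K$ and $W$, and gives that $(K^{c}\cap V)\cup(K\cap W)$ is open in $X$.

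It then remains to identify this set with $g^{-1}(U_y)$. By hypothesis (3), $f$ and $g$ agree on $K^{c}$, so $K^{c}\cap V=K^{c}\cap f^{-1}(U_y)=K^{c}\cap g^{-1}(U_y)$; on the other hand $K\cap W=K\cap g^{-1}(U_y)$ by definition of $W$. Therefore
\[
(K^{c}\cap V)\cup(K\cap W)=\big(K^{c}\cap g^{-1}(U_y)\big)\cup\big(K\cap g^{-1}(U_y)\big)=g^{-1}(U_y),
\]
which is thus open in $X$, as desired.

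There is no real obstacle once one decides to test continuity on the basic open sets $U_y$: the argument is essentially bookkeeping organized around Lemma \ref{lemm:KVW_abto}. The only substantive ingredient is that $U_y$ is a down-set, which is precisely what makes the hypothesis $f\leq g$ interact correctly with preimages; the analogous statement with $g|_{K^{c}}$ replaced by a condition on $g|_{V}$ and closedness would instead be proved via Lemma \ref{lemm:KVW_cerrado}, using that the sets $F_y$ are up-sets.
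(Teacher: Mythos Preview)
Your proof is correct and follows essentially the same approach as the paper's: both decompose $g^{-1}$ of an open set as $(K^{c}\cap f^{-1}(\cdot))\cup(K\cap g^{-1}(\cdot))$ and invoke Lemma~\ref{lemm:KVW_abto}, using that $f\leq g$ forces $g^{-1}(U)\subseteq f^{-1}(U)$ for open $U$. The only cosmetic difference is that the paper works with an arbitrary open subset $U\subseteq Y$ (any open set in an Alexandroff space is a down-set), whereas you restrict to the basic opens $U_y$; the argument is otherwise identical.
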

\begin{proof}
Let $U\subseteq Y$ be an open subset.  
We have that
\[g^{-1}(U)=(K^{c}\cap g^{-1}(U))\cup(K\cap g^{-1}(U))=(K^{c}\cap f^{-1}(U))\cup(K\cap g^{-1}(U)).\]

It is clear that $f^{-1}(U)$ is open in $X$.
Now, as $f\leq g$ and $U$ is open, then $g^{-1}(U)\subseteq f^{-1}(U)$. On the other hand, $K\cap g^{-1}(U)=(g|_K)^{-1}(U)$ is open in $K$. Applying \ref{lemm:KVW_abto}, it follows that $g^{-1}(U)$ is open in $X$. Hence, $g$ is continuous. 
\end{proof}

\begin{prop}\label{prop:f_leq_g_f_cont}
Let $X$ be a topological space and let $Y$ be an Alexandroff space. Let $V\subseteq X$ be an open subspace and let $f,g\colon X\to Y$ be two maps. If
\begin{enumerate}
\item $g$ is continuous,
\item $f\leq g$,
\item $f|_{V^{c}}=g|_{V^{c}}$, and
\item $f|_V$ is continuous
\end{enumerate}
then $f$ is continuous.
\end{prop}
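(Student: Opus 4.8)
The plan is to dualize the proof of Proposition \ref{prop:f_leq_g_g_cont}: there one checks that preimages of open sets are open using Lemma \ref{lemm:KVW_abto}; here I will instead check that $f^{-1}(C)$ is closed in $X$ for every closed subset $C\subseteq Y$, using Lemma \ref{lemm:KVW_cerrado}.

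First I would extract the order-theoretic content of hypothesis (2). Since $Y$ is Alexandroff and $f\leq g$, for each $x\in X$ we have $g(x)\geq f(x)$, i.e.\ $g(x)\in\overline{\{f(x)\}}$. Hence, for any closed set $C\subseteq Y$, if $f(x)\in C$ then $\overline{\{f(x)\}}\subseteq C$ and so $g(x)\in C$; that is, $f^{-1}(C)\subseteq g^{-1}(C)$. This is the exact analogue of the inclusion $g^{-1}(U)\subseteq f^{-1}(U)$ that powered the proof of Proposition \ref{prop:f_leq_g_g_cont}.

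Next, fix a closed set $C\subseteq Y$. Using hypothesis (3) I would write
\[
f^{-1}(C)=\bigl(V^{c}\cap f^{-1}(C)\bigr)\cup\bigl(V\cap f^{-1}(C)\bigr)=\bigl(V^{c}\cap g^{-1}(C)\bigr)\cup\bigl(V\cap f^{-1}(C)\bigr).
\]
By hypothesis (1) the set $K:=g^{-1}(C)$ is closed in $X$; by hypothesis (4) the set $V\cap f^{-1}(C)=(f|_V)^{-1}(C)$ is closed in $V$; and $f^{-1}(C)\subseteq g^{-1}(C)=K$ by the previous paragraph. Thus Lemma \ref{lemm:KVW_cerrado}, applied with the open set $V$, the closed set $K$, and $W:=f^{-1}(C)$, yields that $(V^{c}\cap K)\cup(V\cap W)=f^{-1}(C)$ is closed in $X$. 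Since $C$ was an arbitrary closed subset of $Y$, $f$ is continuous.

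I do not anticipate any real obstacle: once the decomposition above is set up, the conclusion is a formal application of Lemma \ref{lemm:KVW_cerrado}. The only step that genuinely uses the Alexandroff hypothesis on $Y$ — and hence the only place where a little care is needed — is the inclusion $f^{-1}(C)\subseteq g^{-1}(C)$ for closed $C$, which relies on the fact that in an Alexandroff space the closure of a point $y$ consists precisely of the points above $y$.
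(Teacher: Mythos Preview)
Your proof is correct and is precisely the dualization the paper has in mind: the paper's own proof merely says that the result ``can be proved in a similar way as \ref{prop:f_leq_g_g_cont} applying \ref{lemm:KVW_cerrado}'', and that is exactly what you do---replace open sets by closed sets, use $f^{-1}(C)\subseteq g^{-1}(C)$ in place of $g^{-1}(U)\subseteq f^{-1}(U)$, and invoke Lemma~\ref{lemm:KVW_cerrado} with $K=g^{-1}(C)$ and $W=f^{-1}(C)$.
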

\begin{proof}
This proposition can be proved in a similar way as \ref{prop:f_leq_g_g_cont} applying \ref{lemm:KVW_cerrado}.
\end{proof}

As a corollary of the previous propositions we obtain the following result, which generalizes Corollary 3 and Proposition 14 of \cite{stong1966finite}. 

\begin{prop}\label{prop:f_leq_g_son_hom}
Let $X$ be a topological space and let $Y$ be an Alexandroff space. Let $f,g\colon X\to Y$ be continuous maps such that $f(x)\leq g(x)$ for all $x\in X$. Let $A=\{x\in X:f(x)=g(x)\}$. Then $f\simeq g\ (\rel A)$.
\end{prop}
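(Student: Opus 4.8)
The plan is to write down an explicit homotopy and deduce its continuity from the pasting lemma \ref{prop:f_leq_g_g_cont}. Define $H\colon X\times I\to Y$ by
\[
H(x,t)=\begin{cases} f(x)&\text{if }t<1,\\ g(x)&\text{if }t=1.\end{cases}
\]
Granting that $H$ is continuous, the proof is immediate: $Hi_0=f$ and $Hi_1=g$, so $H$ is a homotopy from $f$ to $g$; and if $x\in A$ then $f(x)=g(x)$, hence $H(x,t)=f(x)$ for every $t\in I$, so $H$ is stationary on $A$ and therefore $f\simeq g\ (\rel A)$.

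The only real content is the continuity of $H$, and for this I would apply \ref{prop:f_leq_g_g_cont} with $X\times I$ in the role of the topological space, $Y$ in the role of the Alexandroff space, $K=X\times\{1\}$ in the role of the closed subspace, the map $f\circ\pi_X\colon X\times I\to Y$ in the role of ``$f$'', and $H$ in the role of ``$g$''. One then checks the four hypotheses of that proposition: the map $f\circ\pi_X$ is continuous; $f\circ\pi_X\leq H$, since $H(x,t)=f(x)$ for $t<1$ and $f(x)\leq g(x)=H(x,1)$; the maps $f\circ\pi_X$ and $H$ agree on $K^{c}=X\times[0,1)$, where both equal $f\circ\pi_X$; and $H|_{K}$ is continuous because, under the canonical homeomorphism $X\times\{1\}\cong X$, it is precisely $g$. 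Hence \ref{prop:f_leq_g_g_cont} gives that $H$ is continuous.

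I do not expect any serious obstacle: the substantive work has been isolated into Lemma \ref{lemm:KVW_abto} and Proposition \ref{prop:f_leq_g_g_cont}, and the remaining points — that $X\times\{1\}$ is closed in $X\times I$, that $K^{c}=X\times[0,1)$, and that $H|_{X\times\{1\}}$ corresponds to $g$ — are routine verifications. (One could equally run the argument through \ref{prop:f_leq_g_f_cont} using the open subspace $X\times[0,1)$ and the continuous map $g\circ\pi_X$, but the closed-subspace version above is the more direct route.)
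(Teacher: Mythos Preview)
Your proposal is correct and is precisely the argument the paper has in mind: the paper's own proof reads in full ``Follows easily from \ref{prop:f_leq_g_g_cont} or from \ref{prop:f_leq_g_f_cont},'' and you have simply unpacked that by writing down the obvious homotopy and verifying the hypotheses of \ref{prop:f_leq_g_g_cont} with $K=X\times\{1\}$. Nothing is missing.
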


\begin{proof}
Follows easily from \ref{prop:f_leq_g_g_cont} or from \ref{prop:f_leq_g_f_cont}.
\end{proof}

\section{Beat points and bp--retracts in $\fintopzero/B$}

We begin this section introducing the definitions of beat points and bp--retracts in the category $\fintopzero/B$ of objects over $B$, for some $B\in\fintopzero$, and extending some results of the classical theory of Stong and of the theory of bp--retracts of \cite{cianci2019combinatorial} to this category.

\begin{definition}
Let $E$ and $B$ be finite T$_0$--spaces and let $p\colon E\to B$ be a continuous map. Let $e\in E$.
\begin{itemize}
\item We say that $e$ is a \emph{down beat point} of $p$ if it is a down beat point of both $E$ and $p^{-1}(p(e))$.
\item We say that $e$ is an \emph{up beat point} of $p$ if it is an up beat point of both $E$ and $p^{-1}(p(e))$.
\item We say that $e$ is a \emph{beat point} of $p$ if it is either a down beat point of $p$ or an up beat point of $p$.
\end{itemize}
If $e$ is a beat point of $p$, we will also say that the restriction $p|\colon E-\{e\}\to B$ of $p$ can be obtained from $p$ by removing the beat point $e$.

We will say that the map $p$ is \emph{minimal} if does not have beat points.
\end{definition}

\begin{definition}
Let $E$ and $B$ be finite T$_0$--spaces and let $p\colon E\to B$ be a continuous map. 

We will say that a map $p'\colon E'\to B$ is a \emph{dbp--retract} (resp. \emph{ubp--retract}) of $p$ if it can be obtained by successively removing down beat points (resp. up beat points) of $p$.

We will say that a continuous map $p_C\colon E'\to B$ is a \emph{core} of $p$ if it is a minimal map and a strong deformation retract of the map $p$ (cf. definition \ref{defi:rdf_sobre_B}).
\end{definition}

\begin{rem}\label{rema:bp_p}
Let $E$ and $B$ be finite T$_0$--spaces, let $p\colon E\to B$ be a continuous map and let $e_0\in E$. From the previous definition it follows that $e_0$ is a down beat point of $p$ if and only if $e_0$ is a down beat point of $E$ and $p(e_0)=p(e_1)$ where $e_1=\max(\widehat{U}_{e_0})$. In a similar way, $e_0$ is an up beat point of $p$ if and only if $e_0$ is an up beat point of $E$ and $p(e_0)=p(e_2)$ where $e_2=\min(\widehat{F}_{e_0})$.
\end{rem}

\begin{prop}\label{prop:r_flecha_sobre_B_sii_e_bp_de_p}
Let $E$ and $B$ be finite T$_0$--spaces, let $p\colon E\to B$ be a continuous map, let $e$ be a down beat point (resp. up beat point) of $E$, let $i\colon E-\{e\}\to E$ be the inclusion map and let $r\colon E\to E-\{e\}$ be the retraction associated to the removal of the down beat point (resp. up beat point) $e$.

If $e$ is a down beat point (resp. up beat point) of $p$, then $r$ is an arrow over $B$ and $pi$ is a strong deformation retract of $p$. Conversely, if $r$ is an arrow over $B$ from $p$ to $pi$, then $e$ is a down beat point (resp. up beat point) of $p$.  
\end{prop}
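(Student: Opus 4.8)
The plan is to treat the down beat point case (the up beat point case being dual, via $\text{op}$). Recall the explicit description of the retraction $r\colon E\to E-\{e\}$ associated to the removal of a down beat point $e$: if $e_1=\max(\widehat U_e)$, then $r(x)=x$ for $x\neq e$ and $r(e)=e_1$; moreover $r\leq\id_E$, $r^2=r$, $ri=\id_{E-\{e\}}$ and $ir\leq\id_E$. The whole statement is an unwinding of Remark \ref{rema:bp_p} together with Proposition \ref{prop:fence}(1) (or directly Proposition \ref{prop:f_leq_g_son_hom}) applied in the slice category $\fintopzero/B$.

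First I would prove the forward implication. Assume $e$ is a down beat point of $p$, i.e.\ $e$ is a down beat point of $E$ and, by Remark \ref{rema:bp_p}, $p(e)=p(e_1)$. To see that $r$ is an arrow over $B$, I must check $pr=p$: for $x\neq e$ this is immediate since $r(x)=x$, and $p(r(e))=p(e_1)=p(e)$ by hypothesis. Thus $r\colon p\to pi$ is a morphism in $\fintopzero/B$, and $i\colon pi\to p$ is a morphism over $B$ since $pi=pi$ trivially. Now $ir\leq\id_E$, both $ir$ and $\id_E$ are arrows over $B$ from $p$ to $p$ (we have $p(ir)=p$ just shown, and they agree on $E-\{e\}$), so by Proposition \ref{prop:fence}(1) — or Proposition \ref{prop:f_leq_g_son_hom} — $ir$ is fiber homotopic to $\id_E$ relative to $\{x\in E: ir(x)=x\}=E-\{e\}$, which contains $A:=E-\{e\}$. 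Hence $\id_p\simeq ir\ (\rel A)$, and since $ri=\id_{E-\{e\}}$, the map $r$ is a retraction exhibiting $pi$ as a strong deformation retract of $p$ in the sense of Definition \ref{defi:rdf_sobre_B}.

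For the converse, suppose $r$ is an arrow over $B$ from $p$ to $pi$, i.e.\ $p\circ r=p$. We already know $e$ is a down beat point of $E$ by hypothesis, so by Remark \ref{rema:bp_p} it remains only to verify $p(e)=p(e_1)$ with $e_1=\max(\widehat U_e)$. But this is immediate: $p(e)=p(r(e))=p(e_1)$, using $pr=p$ and the explicit formula $r(e)=e_1$. Hence $e$ is a down beat point of $p$.

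The up beat point case is completely analogous: one replaces $\widehat U_e$ by $\widehat F_e$, sets $e_2=\min(\widehat F_e)$, uses the retraction $r(e)=e_2$ with $ir\geq\id_E$, and invokes Proposition \ref{prop:fence}(1) (which covers both $f\leq g$ and, by duality, $f\geq g$) or Proposition \ref{prop:f_leq_g_son_hom} with the roles of $f$ and $g$ exchanged. I do not anticipate a genuine obstacle here; the only point requiring a little care is making sure that all the maps in play ($r$, $i$, $ir$, $\id_E$) are genuinely recognized as arrows in $\fintopzero/B$ before invoking the fiber-homotopy results, and that the relative set $A=E-\{e\}$ matches the subset on which $ir$ and $\id_E$ coincide — both of which are straightforward from the explicit description of $r$.
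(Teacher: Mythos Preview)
Your proposal is correct and follows essentially the same approach as the paper: both verify directly that $pr=p$ using Remark \ref{rema:bp_p} (i.e.\ $p(e_1)=p(e)$), then argue that the homotopy $ir\simeq\id_E$ is a fiber homotopy rel $E-\{e\}$, and the converse is the same one-line computation $p(e_1)=p(r(e))=p(e)$. The only cosmetic difference is that the paper invokes the canonical homotopy directly (``clearly a homotopy over $B$ relative to $E-\{e\}$''), whereas you cite Proposition \ref{prop:fence}(1); note that Proposition \ref{prop:f_leq_g_son_hom} alone does not assert the homotopy is over $B$, so \ref{prop:fence}(1) is the right reference there.
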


\begin{proof}
We will prove the case in which $e$ is a down beat point of $E$. The other case is similar.

First suppose that $e$ is a down beat point of $p$. It is clear that $i$ is an arrow over $B$ and from remark \ref{rema:bp_p} it follows that $r$ is also an arrow over $B$.
Moreover, $ri=\id_{E-\{e\}}$ and the canonical homotopy $H\colon ir\simeq \id_E$ is clearly a homotopy over $B$ relative to $E-\{e\}$. It follows that $pi$ is a strong deformation retract of $p$.

Now suppose that $r$ is an arrow over $B$ and let $e'=\max (\widehat{U}^{E}_e)$. Then
\[p(e')=p(r(e))=pir(e)=p(e).\]
The result follows from remark \ref{rema:bp_p}.
\end{proof}

The following corollary is immediate.

\begin{coro}
Every continuous map between finite T$_0$--spaces has a core.
\end{coro}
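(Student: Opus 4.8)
The plan is to build a core of a continuous map $p\colon E\to B$ by iterating the removal of beat points of $p$, exactly as one builds a core of a finite $T_0$--space by removing its beat points, and to use Proposition \ref{prop:r_flecha_sobre_B_sii_e_bp_de_p} to keep track of the fiber homotopy data along the way.

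First I would set up the induction on $|E|$. If $p$ is already minimal (has no beat points), then $p$ is its own core and there is nothing to prove. Otherwise, $p$ has a beat point $e$, and I form the restriction $p|\colon E-\{e\}\to B$. By Proposition \ref{prop:r_flecha_sobre_B_sii_e_bp_de_p}, the associated retraction $r\colon E\to E-\{e\}$ is an arrow over $B$ and $p|$ is a strong deformation retract of $p$ in $\fintopzero/B$. Since $|E-\{e\}|<|E|$, the inductive hypothesis gives a core $p_C\colon E'\to B$ of $p|$, that is, $p_C$ is minimal and a strong deformation retract of $p|$ over $B$. It then remains to observe that ``strong deformation retract over $B$'' is transitive: if $p_C$ is a strong deformation retract of $p|$ and $p|$ is a strong deformation retract of $p$, then $p_C$ is a strong deformation retract of $p$. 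This follows from composing the retractions and pasting the relative homotopies over $B$ in the usual way, using that the second homotopy is relative to $E-\{e\}\supseteq E'$. Hence $p_C$ is a core of $p$.

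The only genuine content beyond bookkeeping is the transitivity of the strong-deformation-retract relation in $\fintopzero/B$, which is routine and completely parallel to the classical statement in $\Top$ recorded after Definition \ref{defi:rdf_sobre_B}; so I expect no real obstacle here. One small point worth stating explicitly is that the induction terminates because each step strictly decreases the (finite) cardinality of the total space, so after finitely many removals of beat points of $p$ one reaches a minimal map.

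\begin{proof}
We argue by induction on $|E|$. If $p$ has no beat points, then $p$ is minimal and is trivially a strong deformation retract of itself, so $p$ is its own core. Otherwise, let $e\in E$ be a beat point of $p$, let $i\colon E-\{e\}\to E$ be the inclusion and let $r\colon E\to E-\{e\}$ be the retraction associated to the removal of $e$. By Proposition \ref{prop:r_flecha_sobre_B_sii_e_bp_de_p}, $r$ is an arrow over $B$ and $p|=pi$ is a strong deformation retract of $p$ in $\fintopzero/B$, with the canonical homotopy $H\colon ir\simeq\id_E$ being a homotopy over $B$ relative to $E-\{e\}$. Since $|E-\{e\}|<|E|$, by the inductive hypothesis $p|$ has a core $p_C\colon E'\to B$; let $j\colon E'\to E-\{e\}$ be the inclusion, let $s\colon E-\{e\}\to E'$ be a retraction over $B$ with $js\simeq\id_{E-\{e\}}\ (\rel E')$ over $B$, and let $G$ be such a homotopy.

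Then $sr\colon E\to E'$ is a retraction over $B$ of the inclusion $ij\colon E'\to E$, since $sr\,ij=s(ri)j=s\,\id_{E-\{e\}}\,j=sj=\id_{E'}$. To see that $ij\,sr\simeq\id_E\ (\rel E')$ over $B$, first note $ij\,sr=i(js)r\simeq i\,\id_{E-\{e\}}\,r=ir\ (\rel E-\{e\})$ over $B$ (apply $i(-)r$ to $G$, which is relative to $E'$ and hence the resulting homotopy is relative to $E'$; moreover it is stationary on $E-\{e\}$ because $G$ is), and then $ir\simeq\id_E\ (\rel E-\{e\})$ over $B$ via $H$. Concatenating these two homotopies over $B$ yields a homotopy $ij\,sr\simeq\id_E$ over $B$ relative to $E-\{e\}$, and in particular relative to $E'\subseteq E-\{e\}$. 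Thus $p_C$ is a strong deformation retract of $p$ in $\fintopzero/B$. Since $p_C$ is minimal, it is a core of $p$. The induction terminates because each removal of a beat point strictly decreases $|E|$.
\end{proof}
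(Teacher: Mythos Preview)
Your argument follows the same route as the paper, which simply declares this corollary immediate from Proposition~\ref{prop:r_flecha_sobre_B_sii_e_bp_de_p}: remove beat points of $p$ one at a time until a minimal map is reached, each step giving a strong deformation retract over $B$, and then use transitivity of the strong-deformation-retract relation over $B$.

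One small slip in your write-up: the homotopy $(x,t)\mapsto iG(r(x),t)$ from $ijsr$ to $ir$ is \emph{not} relative to $E-\{e\}$ as you assert, and $G$ is not stationary on all of $E-\{e\}$ but only on $E'$ (for $x\in (E-\{e\})\setminus E'$ one has $js(x)\neq x$, so the track at $x$ genuinely moves). The homotopy is only relative to $E'$. This is harmless for the argument, since relative to $E'$ is exactly what is required for $p_C$ to be a strong deformation retract of $p$, and you correctly arrive at that conclusion at the end; just delete the erroneous intermediate claim.
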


\begin{ex}
Let $X$ be a finite T$_0$--space and let $X_C$ be a core of $X$. Let $i\colon X_C\to X$ be the inclusion map and let $r\colon X\to X_C$ be a retraction of $i$ obtained by successively removing the beat points $x_0,x_1,\ldots,x_n$. We will prove that $\id_{X_C}$ is a core of $r$.

We define $X_0=X$ and, inductively, $X_{k+1}=X_k-\{x_k\}$ for $k=0,\ldots,n$. In addition, we consider for $k=0,\ldots,n$, the retraction $r_k\colon X_k\to X_{k+1}$ corresponding to the removal of the beat point $x_k$ and the inclusion map $j_k\colon X_{k+1}\to X_k$. We will prove that, for all $k=0,\ldots,n$, the point $x_k$ is a beat point of the restriction 
\[rj_0j_1\ldots j_{k-1}\colon X_{k}\to X_C\]
of $r$.

Let $k\in\{0,\ldots,n\}$. Since $r=r_n r_{n-1}\ldots r_1 r_0$, it follows that
\begin{align*}
rj_0\ldots j_k r_k&=r_n r_{n-1}\ldots r_k \ldots r_1 r_0 j_0 j_1\ldots j_k r_k=r_n r_{n-1}\ldots r_k=\\&=r_n r_{n-1}\ldots r_0 j_0\ldots j_{k-1}=r j_0\ldots j_{k-1}.
\end{align*}
Thus, by \ref{prop:r_flecha_sobre_B_sii_e_bp_de_p} it follows that $x_k$ is a beat point of the restriction $rj_0\ldots j_{k-1}$ of $r$. Hence, $r j_0 \ldots j_k$ is strong deformation retract of $r j_0\ldots j_{k-1}$.

Inductively, it is easy to see that $r j_0\ldots j_k$ is strong deformation retract of $r$ for $k=0,\ldots,n$. In particular, $\id_{X_C}=rj_0\cdots j_n$ is strong deformation retract of $r$. Moreover, since the fibers of $\id_{X_C}$ are one-point spaces, it is clear that the map $\id_{X_C}$ is minimal. It follows that $\id_{X_C}$ is a core of $r$.
\end{ex}

\begin{prop}\label{prop:sin_dbp_comparable_con_id_implica_id}
Let $E$ and $B$ be finite T$_0$--spaces and let $p\colon E\to B$ be a continuous map. If $p$ does not have down beat points (resp. up beat points) and $h\colon p\to p$ is an arrow over $B$ such that $h\leq \id_p$ (resp. $h\geq \id_p$), then $h=\id_p$.
\end{prop}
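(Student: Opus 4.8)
The plan is to adapt Stong's proof of Proposition \ref{prop_stong}(1) to the relative setting, the only genuinely new ingredient being Remark \ref{rema:bp_p}, which isolates the extra condition that a beat point of $E$ must meet in order to count as a beat point of $p$. I will carry out the case in which $p$ has no down beat points and $h\leq\id_p$; the statement with up beat points and $h\geq\id_p$ is formally dual and follows by replacing $\widehat{U}$ with $\widehat{F}$ and ``minimal'' with ``maximal'' throughout (equivalently, by passing to opposite posets).

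I would argue by contradiction. Suppose $h\neq\id_E$, so the set $S=\{x\in E\tq h(x)\neq x\}$ is nonempty; since $E$ is finite, $S$ has a minimal element $x$ with respect to the order of $E$. First I would note that $h(y)=y$ for every $y<x$: otherwise such a $y$ would lie in $S$ and contradict the minimality of $x$. Next, since $h\leq\id_p$ gives $h(x)\leq x$ and $x\in S$ gives $h(x)\neq x$, we get $h(x)<x$, that is, $h(x)\in\widehat{U}_x$.

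The key step is to show that $h(x)=\max(\widehat{U}_x)$. Let $y\in\widehat{U}_x$, so $y<x$ and hence $y\leq x$; since $h$ is a continuous map between Alexandroff spaces it is order-preserving, so $h(y)\leq h(x)$, and by the previous paragraph $h(y)=y$, whence $y\leq h(x)$. As $h(x)$ itself belongs to $\widehat{U}_x$, this shows $h(x)$ is the maximum of $\widehat{U}_x$, so $x$ is a down beat point of $E$. Finally, because $h$ is an arrow over $B$ we have $p\big(\max(\widehat{U}_x)\big)=p(h(x))=ph(x)=p(x)$, so Remark \ref{rema:bp_p} tells us that $x$ is a down beat point of $p$, contradicting the hypothesis that $p$ has no down beat points. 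Therefore $h=\id_E=\id_p$.

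I do not expect a real obstacle here: the combinatorial core is exactly Stong's argument. The only points requiring care are invoking Remark \ref{rema:bp_p} with the right data (namely $\max(\widehat{U}_x)=h(x)$ together with $p(h(x))=p(x)$, which is where the hypothesis $ph=p$ is used) so as to upgrade ``beat point of $E$'' to ``beat point of $p$'', and stating the dual case cleanly, since there $h\geq\id_p$ and the absence of up beat points force one to work with maximal elements and with $\widehat{F}_x$ instead.
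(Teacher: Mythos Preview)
Your proof is correct and follows essentially the same route as the paper's: both argue by contradiction, pick a minimal element of the set where $h$ is not the identity, show that element is a down beat point of $E$ with $\max(\widehat{U}_x)=h(x)$, and then invoke Remark \ref{rema:bp_p} together with $ph=p$ to upgrade this to a down beat point of $p$. In fact you spell out the step ``$h(x)=\max(\widehat{U}_x)$'' more explicitly than the paper does.
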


\begin{proof}
We will prove the case in which $p$ does not have down beat points and $h\leq \id_p$. The other case is similar.
Let $A=\{e\in E:h(e)<e\}$ and suppose that $A\neq\varnothing$. Let $e_0\in \mnl A$. It is not difficult to prove that $h(e_0)=\max \widehat{U}^{E}_{e_0}$ and hence $e_0$ is a down beat point of $E$. Moreover, as $h$ is an arrow over $B$, 
$ph(e_0)=p(e_0)$. From remark \ref{rema:bp_p} it follows that $e_0$ is a down beat point of $p$, which entails a contradiction.
Hence, $A$ is empty and thus $h=\id_p$.
\end{proof}

\begin{coro}\label{coro:p_mnl_entonces_h_es_id}
Let $E$ and $B$ be finite T$_0$--spaces, let $p\colon E\to B$ be a minimal map and let $h\colon p\to p$ be an arrow over $B$ such that $h\simeq \id_p$ over $B$. Then $h=\id_p$.
\end{coro}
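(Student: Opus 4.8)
\noindent\emph{Proof proposal.} The plan is to reduce the statement to Proposition \ref{prop:sin_dbp_comparable_con_id_implica_id} by means of the fence description of fiber homotopies provided by Proposition \ref{prop:fence}. Since $p$ is minimal, it has neither down beat points nor up beat points, so both clauses of Proposition \ref{prop:sin_dbp_comparable_con_id_implica_id} are at our disposal.

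First I would apply Proposition \ref{prop:fence}(2) to the arrows $\id_p$ and $h$ over $B$, taking $X=Y=E$, $q=p$ and $A=\varnothing$. This is legitimate: $p\,\id_E=p=p\,h$ (the latter because $h$ is an arrow over $B$), the condition on restrictions to $A$ is vacuous, and $\id_p$ and $h$ are fiber homotopic relative to $\varnothing$ by hypothesis. Hence there exist $n\in\N$ and arrows $f_0,f_1,\ldots,f_n\colon p\to p$ over $B$ with
\[
\id_p=f_0\leq f_1\geq f_2\leq\cdots\leq f_n=h .
\]

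Then I would prove by induction on $i$ that $f_i=\id_p$ for every $i\in\{0,1,\ldots,n\}$. The base case $f_0=\id_p$ is immediate. For the inductive step, assume $f_i=\id_p$. By construction $f_{i+1}$ is comparable with $f_i=\id_p$, that is, either $f_{i+1}\leq\id_p$ or $f_{i+1}\geq\id_p$. As $f_{i+1}$ is an arrow over $B$ from $p$ to $p$ and $p$ has no down beat points (resp.\ no up beat points), the corresponding clause of Proposition \ref{prop:sin_dbp_comparable_con_id_implica_id} gives $f_{i+1}=\id_p$. In particular $h=f_n=\id_p$, as claimed.

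I do not expect any genuine obstacle here: the only point worth noting is that the fence allows one to propagate the rigidity statement of Proposition \ref{prop:sin_dbp_comparable_con_id_implica_id} along the whole chain $f_0,f_1,\ldots,f_n$, which is precisely the fiberwise analogue of the way part (3) of Proposition \ref{prop_stong} is deduced from its parts (1) and (2).
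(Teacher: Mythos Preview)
Your proposal is correct and follows exactly the route the paper intends: the paper's proof reads ``Follows easily from \ref{prop:fence} and the previous proposition,'' and you have spelled out precisely this argument, obtaining a fence of arrows over $B$ between $\id_p$ and $h$ via Proposition~\ref{prop:fence}(2) and then collapsing it step by step using both clauses of Proposition~\ref{prop:sin_dbp_comparable_con_id_implica_id}.
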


\begin{proof}
Follows easily from \ref{prop:fence} and the previous proposition.
\end{proof}

\begin{coro}
Let $E$ and $B$ be finite T$_0$--spaces and let $p\colon E\to B$ be a continuous map. Then, two cores of $p$ are isomorphic.
\end{coro}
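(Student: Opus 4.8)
The plan is to imitate Stong's classical argument that two cores of a finite T$_0$--space are homeomorphic, now carried out in the slice category $\fintopzero/B$, with Corollary~\ref{coro:p_mnl_entonces_h_es_id} playing the role of the key rigidity statement.

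First I would fix two cores $p_1\colon E_1\to B$ and $p_2\colon E_2\to B$ of $p$. By definition each $p_k$ ($k=1,2$) is a minimal map and a strong deformation retract of $p$, so (cf.\ Definition~\ref{defi:rdf_sobre_B}) there are an inclusion $i_k\colon E_k\to E$, which is an arrow $p_k\to p$ over $B$, and a retraction $r_k\colon E\to E_k$, which is an arrow $p\to p_k$ over $B$, satisfying $r_ki_k=\id_{E_k}$ and $i_kr_k\simeq\id_E$ through a homotopy over $B$ relative to $E_k$.

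Next I would set $f=r_2 i_1\colon E_1\to E_2$ and $g=r_1 i_2\colon E_2\to E_1$. Since $r_2$ and $i_1$ are arrows over $B$, so is $f$ (indeed $p_2 f = p_2 r_2 i_1 = p\,i_1 = p_1$), and likewise $g$ is an arrow $p_2\to p_1$ over $B$. Then
\[
gf = r_1 i_2 r_2 i_1 = r_1\,(i_2 r_2)\,i_1 \simeq r_1\,\id_E\,i_1 = r_1 i_1 = \id_{E_1},
\]
where the displayed homotopy is over $B$ because it is obtained by whiskering the homotopy $i_2 r_2\simeq\id_E$ over $B$ with the arrows $r_1$ and $i_1$ over $B$. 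Hence $gf\colon p_1\to p_1$ is an arrow over $B$ fiber homotopic to $\id_{p_1}$; as $p_1$ is minimal, Corollary~\ref{coro:p_mnl_entonces_h_es_id} yields $gf=\id_{p_1}$. The symmetric computation (exchanging the two indices) gives $fg=\id_{p_2}$. Therefore $f$ is an isomorphism in $\fintopzero/B$ with inverse $g$, i.e.\ the cores $p_1$ and $p_2$ are isomorphic.

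Once the earlier results are available the argument is essentially bookkeeping; the only point demanding a little care is that the composite and whiskered homotopies remain homotopies over $B$, and that both cores are minimal so that Corollary~\ref{coro:p_mnl_entonces_h_es_id} genuinely applies to each of $gf$ and $fg$. I do not anticipate a substantive obstacle.
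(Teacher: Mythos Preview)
Your proof is correct and is exactly the argument the paper intends: the corollary is stated without proof immediately after Corollary~\ref{coro:p_mnl_entonces_h_es_id}, and the implicit reasoning is the standard Stong argument in $\fintopzero/B$ using that rigidity result, which is precisely what you have written out.
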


The results of \cite{cianci2019combinatorial} regarding bp--retracts in $\fintopzero$ can be extended in a natural way to the category $\fintopzero/B$ for all $B\in\fintopzero$. In what follows we will show how some of these results can be extended.

The following theorem is a generalization of \ref{theo:dbpr_equivalences} (\cite[Theorem 3.5]{cianci2019combinatorial}).

\begin{theo} \label{theo:dbpr_equivalences_sobre_B}
Let $A$, $X$ and $B$ be finite T$_0$--spaces with $A\subseteq X$. Let $i\colon A\to X$ be the inclusion map and let $a\colon A\to B$ and $x\colon X\to B$ be continuous maps such that $xi=a$. We consider $(A,a)$ and $(X,x)$ as objects over $B$ and the map $i\colon a\to x$ as an arrow over $B$. Then, the following propositions are equivalent:
\begin{enumerate}
\item $a$ is a dbp--retract of $x$.
\item There exists an arrow $f\colon x\to x$ over $B$ such that $f\leq\id_X$, $f^{2}=f$ and $f(X)=A$.
\item There exists a unique arrow $f\colon x\to x$ over $B$ such that $f\leq\id_X$, $f^{2}=f$ and $f(X)=A$.
\item There exists a retraction $r\colon x\to a$ of $i$ such that $ir\leq\id_X$.
\item There exists a unique retraction $r\colon x\to a$ of $i$ such that $ir\leq\id_X$.
\end{enumerate}
\end{theo}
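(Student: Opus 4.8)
The strategy is to bootstrap from Theorem~\ref{theo:dbpr_equivalences}, which provides the analogous chain of equivalences in $\fintopzero$ (with items (2)--(5) stated without the ``over $B$'' clauses and with ``$A$ is a dbp--retract of $X$'' replacing item (1)), together with Remark~\ref{rema:bp_p} and Proposition~\ref{prop:r_flecha_sobre_B_sii_e_bp_de_p}, which govern how beat points of $p:=x$ relate to beat points of $X$. Write $a=xi=p|_A$ and keep $i\colon A\to X$ for the inclusion.

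First I would dispatch the easy equivalences. The assignments $f\mapsto r$ (corestriction of $f$ to $A$) and $r\mapsto ir$ are mutually inverse bijections between maps $f$ as in (2) and retractions $r$ as in (4): the relations $ri=\id_A$, $ir\le\id_X$, $f^2=f$ and $f(X)=A$ translate into one another, and since $pf=p(ir)$ while $ar=(pi)r=p(ir)$, the map $f$ is an arrow over $B$ exactly when $r$ is. This gives (2)$\Leftrightarrow$(4). For the uniqueness clauses, any $f$ as in (2) is in particular a map as in item (2) of Theorem~\ref{theo:dbpr_equivalences}, hence it is \emph{the} unique endomap of $X$ with those properties (over $B$ or not); thus (2)$\Leftrightarrow$(3), and likewise (4)$\Leftrightarrow$(5). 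It remains to prove (1)$\Leftrightarrow$(2).

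For (1)$\Rightarrow$(2): given a witnessing sequence $X=X_0\supseteq\cdots\supseteq X_n=A$ in which $X_k$ is obtained from $X_{k-1}$ by removing a down beat point of $p|_{X_{k-1}}$, Proposition~\ref{prop:r_flecha_sobre_B_sii_e_bp_de_p} says each associated retraction $r_{k-1}\colon X_{k-1}\to X_k$ is an arrow over $B$ with $\iota_{k-1}r_{k-1}\le\id_{X_{k-1}}$, where $\iota_{k-1}$ denotes the inclusion. Composing, $r:=r_{n-1}\cdots r_0\colon X\to A$ is a retraction of $i$ over $B$, and a downward induction on $k$ — using that continuous maps between finite T$_0$--spaces are order-preserving, so that pre- and post-composition preserve $\le$ — shows $ir\le\id_X$. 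Then $f:=ir$ satisfies $f\le\id_X$, $f^2=f$ (because $ri=\id_A$), $f(X)=A$ and $pf=p$, so (2) holds.

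Finally (2)$\Rightarrow$(1), which I expect to be the crux. Argue by induction on $|X-A|$; when $X=A$ there is nothing to prove. Otherwise let $f$ be as in (2) and pick $e$ minimal in the subposet $X-A$. Every $y<e$ in $X$ then lies in $A$, so $f(y)=y$ (as $f$ is idempotent with image $A$) and hence $y=f(y)\le f(e)$; combined with $f(e)\in A$ and $f(e)<e$, this shows $\widehat{U}^X_e$ has maximum $f(e)$, so $e$ is a down beat point of $X$, and $pf=p$ gives $p(e)=p(f(e))=p(\max\widehat{U}^X_e)$, whence $e$ is a down beat point of $p$ by Remark~\ref{rema:bp_p}. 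Since $e\notin A=f(X)$, the restriction $f':=f|_{X-\{e\}}$ is an arrow over $B$ from $p|_{X-\{e\}}$ to itself with $f'\le\id$, $(f')^2=f'$ and $f'(X-\{e\})=A$, so by the inductive hypothesis $a$ is a dbp--retract of $p|_{X-\{e\}}$, and hence, prepending the removal of $e$, also a dbp--retract of $p$. The delicate point is precisely this choice of $e$: an arbitrary down beat point of $X$ need not be a beat point of $p$, and minimality in $X-A$ is exactly what forces $\max\widehat{U}^X_e=f(e)$ and therefore the compatibility $p(e)=p(\max\widehat{U}^X_e)$.
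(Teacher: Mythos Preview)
Your proof is correct and follows essentially the same route as the paper. In particular, your crucial step in (2)$\Rightarrow$(1) --- choosing $e$ minimal in $X-A$ --- is exactly the paper's choice of $z_0\in\mnl W$ with $W=\{z\in X:f(z)<z\}$, since idempotence of $f$ together with $f(X)=A$ forces $W=X-A$; the verification that $\max\widehat U^X_e=f(e)$ and hence $p(e)=p(\max\widehat U^X_e)$ is then identical, and both arguments conclude by induction (the paper by deferring to \cite{cianci2019combinatorial}, you by restricting $f$ to $X-\{e\}$).
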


\begin{proof}
From the definition of down beat point of $x$, it is clear that the retraction $r$ associated to the removal of a down beat point of $x$ is an arrow over $B$. Thus, the implication $(1)\Rightarrow (4)$ follows. On the other hand, the implication $(4)\Rightarrow (2)$ can be proved easily taking $f=ir$ as in the proof of Theorem 3.5 of \cite{cianci2019combinatorial}.

Now, we will prove $(2)\Rightarrow (1)$. We proceed as in the proof of Theorem 3.5 of \cite{cianci2019combinatorial}. Suppose that there exists an arrow $f\colon x\to x$ over $B$ such that $f\leq\id_X$, $f^{2}=f$ and $f(X)=A$. Let $W=\{z\in X:f(z)<z\}$. Suppose that $W\neq \varnothing$ and take $z_0\in\mnl W$. In the proof of Theorem 3.5 of \cite{cianci2019combinatorial} it is proved that $z_0$ is down beat point of $X$ with $\max (\widehat{U}_{z_0})=f(z_0)$. Since $xf=x$, it follows that $xf(z_0)=x(z_0)$ and from remark \ref{rema:bp_p}, we obtain that $z_0$ is down beat point of $x$. The proof follows as in \cite{cianci2019combinatorial}.

The rest of the implications can be proved as in \cite{cianci2019combinatorial} applying the fact that the maps involved are arrows over $B$.
\end{proof}

\begin{prop}\label{prop:dbpr_of_Y_and_sub_of_X_is_dbpr_of_X_sobre_B}
Let $A$, $B$, $X$ and $Y$ be finite T$_0$--spaces with $A\subseteq X\subseteq Y$ and let $y\colon Y\to B$ be a continuous map. Let $x\colon X\to B$ and $a\colon A\to B$ be restrictions of $y$. If $a$ is a dbp--retract of $y$ then $a$ is a dbp--retract of $x$.
\end{prop}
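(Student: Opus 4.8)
The plan is to reduce the statement to the characterization of dbp--retracts over $B$ provided by Theorem \ref{theo:dbpr_equivalences_sobre_B}, using the equivalence between being a dbp--retract and the existence of an idempotent arrow over $B$ below the identity with the prescribed image. Since $a$ is a dbp--retract of $y$, applying the implication $(1)\Rightarrow(2)$ of Theorem \ref{theo:dbpr_equivalences_sobre_B} to the objects $(A,a)$ and $(Y,y)$ over $B$ (which are legitimate objects over $B$ because $a$ and $x$ are restrictions of $y$), there exists an arrow $f\colon y\to y$ over $B$, that is, with $yf=y$, such that $f\leq\id_Y$, $f^{2}=f$ and $f(Y)=A$. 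The idea is then simply to restrict $f$ to $X$ and check that it witnesses condition $(2)$ of Theorem \ref{theo:dbpr_equivalences_sobre_B} for the object $(X,x)$.

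The first thing I would verify is that $f$ restricts to a self-map of $X$: since $A\subseteq X$ we have $f(X)\subseteq f(Y)=A\subseteq X$, so $g:=f|_X\colon X\to X$ is a well-defined continuous map. Next I would check the required properties. That $g\leq\id_X$ and $g^{2}=g$ are immediate from the corresponding properties of $f$. That $g$ is an arrow over $B$ follows because for $z\in X$ we have $x(g(z))=y(f(z))=y(z)=x(z)$, using that $f$ is an arrow over $B$ and that $x,a$ are restrictions of $y$. Finally, $g(X)=A$: on the one hand $g(X)=f(X)\subseteq f(Y)=A$; on the other hand, since $f^{2}=f$ and $f(Y)=A$, the map $f$ fixes $A$ pointwise (if $z=f(y')\in A$ then $f(z)=f^{2}(y')=f(y')=z$), hence $A=f(A)\subseteq f(X)=g(X)$.

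Having produced such a $g$, condition $(2)$ of Theorem \ref{theo:dbpr_equivalences_sobre_B} holds for $(X,x)$, and therefore, by $(2)\Rightarrow(1)$ of that theorem, $a$ is a dbp--retract of $x$, as desired. There is no genuine obstacle in this argument: the only points requiring a little care are the observation that $f$ maps $X$ into $X$ (which is exactly where the hypothesis $A\subseteq X$ is used) and the verification that the restriction $g$ remains an arrow over $B$; both are straightforward. An alternative, more computational route would be to induct directly on the sequence of down beat points of $y$ that is removed to reach $A$, tracking which of them lie in $X$, but the characterization-based argument above is considerably cleaner.
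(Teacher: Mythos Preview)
Your argument is correct and is essentially the approach the paper intends: the paper's proof simply says it is analogous to that of \cite[Proposition 3.8]{cianci2019combinatorial}, and that proposition is proved in exactly this way, via the characterization of dbp--retracts (Theorem \ref{theo:dbpr_equivalences} there, Theorem \ref{theo:dbpr_equivalences_sobre_B} here) by restricting the idempotent $f$ to the intermediate subspace. The only addition in the relative setting is the verification that the restriction $g$ remains an arrow over $B$, which you handle correctly.
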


\begin{proof}
The proof of this proposition is analogous to the proof of Proposition 3.8 of \cite{cianci2019combinatorial}.
\end{proof}

The following proposition will be useful in the following sections.

\begin{prop}
Let $E$ and $B$ be finite T$_0$--spaces and let $p\colon E\to B$ be a continuous map. Let $\Omega$ be the set of dbp--retracts of $p$.
We define the following partial order in $\Omega$: given $x,y\in\Omega$, we define $x\leq y$ if and only if $x$ is a dbp--retract of $y$.

Then $\Omega$ has a minimum element.
\end{prop}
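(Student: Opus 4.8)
The plan is to exploit the dictionary between dbp--retracts of $p$ and idempotent arrows over $B$ given by Theorem~\ref{theo:dbpr_equivalences_sobre_B}. First I would observe that $\Omega$ is nonempty (the map $p$ is a dbp--retract of itself, via the empty chain of removals) and finite, since each of its elements is a restriction of $p$ to a subspace of the finite set $E$ and is therefore determined by its total space. In a finite nonempty poset it suffices to prove that every pair of elements has a common lower bound: once this is known, any minimal element of $\Omega$ (which exists by finiteness) must, by minimality, lie below every element of $\Omega$, hence be the minimum. So the whole proof reduces to constructing, for two elements $x\colon X\to B$ and $y\colon Y\to B$ of $\Omega$, an element of $\Omega$ that is $\le x$ and $\le y$.

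To build it, apply the implication $(1)\Rightarrow(2)$ of Theorem~\ref{theo:dbpr_equivalences_sobre_B} to $p$ to obtain arrows $f,g\colon p\to p$ over $B$ with $f\le\id_E$, $f^{2}=f$, $f(E)=X$, and $g\le\id_E$, $g^{2}=g$, $g(E)=Y$. Consider the sequence of maps $a_1=f$ and, for $k\ge 1$, $a_{2k}=g\circ a_{2k-1}$ and $a_{2k+1}=f\circ a_{2k}$. Since $f,g\le\id_E$, this is a non-increasing sequence in the pointwise order on the finite set $E^{E}$, so it stabilizes to some map $h$; because the left-most factor alternates between $f$ and $g$, stabilization yields $f\circ h=h=g\circ h$. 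Writing $h$ as a finite composite of copies of $f$ and $g$ and using $f\circ h=g\circ h=h$ to absorb the factors one at a time from the right shows that $w\circ h=h$ for every word $w$ in $f$ and $g$; taking $w=h$ gives $h^{2}=h$. The same composite expression, together with $pf=pg=p$ and $f,g\le\id_E$, shows that $h$ is an arrow $p\to p$ over $B$ with $h\le\id_E$, and the relations $f\circ h=h=g\circ h$ force $h(E)\subseteq X\cap Y$.

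Now the implication $(2)\Rightarrow(1)$ of Theorem~\ref{theo:dbpr_equivalences_sobre_B}, applied to $p$ and the idempotent $h$, shows that $p|_{h(E)}$ is a dbp--retract of $p$, i.e.\ $p|_{h(E)}\in\Omega$. Since $h(E)\subseteq X$ and $h(E)\subseteq Y$, Proposition~\ref{prop:dbpr_of_Y_and_sub_of_X_is_dbpr_of_X_sobre_B} (taking the ambient space to be $E$ and the ambient map to be $p$) shows that $p|_{h(E)}$ is a dbp--retract of both $x$ and $y$, that is, $p|_{h(E)}\le x$ and $p|_{h(E)}\le y$. This furnishes the required common lower bound, and by the reduction of the first paragraph $\Omega$ has a minimum element; it is then necessarily the map $p|_{M}$, where $M=\bigcap_{z\in\Omega}(\text{total space of }z)$.

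The only delicate point is checking that the stabilized map $h$ is idempotent; the remaining verifications (that the $a_n$ are non-increasing and eventually constant, that $h$ is an arrow over $B$ with $h\le\id_E$, and that $h(E)\subseteq X\cap Y$) are routine. Idempotency rests on the observation that, once $f\circ h=g\circ h=h$, every word in $f$ and $g$ acts as the identity on $h(E)$, in particular the word that equals $h$, so that $h\circ h=h$. I would use the alternating composite $\cdots fgf$ rather than a single composite such as $f\circ g$ precisely because a single composite controls only one of the two images $X$ and $Y$, whereas the stabilized $h$ satisfies $f\circ h=h$ and $g\circ h=h$ simultaneously; one could alternatively phrase the comparison $p|_{h(E)}\le x$ through Corollary~\ref{coro:subseteq_iff_f1_leq_f2} instead of Proposition~\ref{prop:dbpr_of_Y_and_sub_of_X_is_dbpr_of_X_sobre_B}.
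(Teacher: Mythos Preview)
Your argument is correct and follows the line the paper intends: the paper's own proof simply says that it is analogous to that of Proposition~3.18 of \cite{cianci2019combinatorial}, carried out with the over-$B$ versions of the relevant tools, which are precisely Theorem~\ref{theo:dbpr_equivalences_sobre_B} and Proposition~\ref{prop:dbpr_of_Y_and_sub_of_X_is_dbpr_of_X_sobre_B} as you invoke them. Your stabilization of the alternating composite $\cdots gfgf$ to produce an idempotent common lower bound, together with the finite-poset reduction ``minimal $+$ lower bounds $\Rightarrow$ minimum'', is the expected way to execute that analogy.
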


\begin{proof}
The proof of this proposition is similar to the proof of proposition 3.18 of \cite{cianci2019combinatorial}, applying the analogous versions of the results given in that article. 
\end{proof}

Now we will prove that the core of a fiber bundle between finite T$_0$--spaces is again a fiber bundle.

\begin{prop}\label{prop:core_fibrado}
Let $E$, $B$ and $F$ be finite T$_0$--spaces and let $p\colon E\to B$ be a fiber bundle with fiber $F$. Then the core of $p$ is a fiber bundle with base $B$ and whose fiber is a core of $F$. 
\end{prop}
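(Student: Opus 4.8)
The plan is to reduce to a connected base, realize $p$ as a topological Grothendieck construction, and then replace the gluing data by its restriction to an invariant core of the fibre. To reduce to the connected case, note that the connected components of a finite $T_0$--space are clopen, so a point $e\in p^{-1}(B_i)$ over a component $B_i$ of $B$ is a beat point of $p$ exactly when it is a beat point of the restriction $p|\colon p^{-1}(B_i)\to B_i$, which is again a fibre bundle with fibre $F$ over the connected space $B_i$; hence the core of $p$ is the disjoint union of the cores of these restrictions, and since any two cores of $F$ are homeomorphic it is enough to treat the connected case. When $B$ is connected, Theorem~\ref{theo:classification_fiber_bundles_Grothendieck_construction} lets us assume $p=\pi_B\colon\int D\to B$ for a functor $D\colon B\to\aut(F)$.

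The heart of the argument is the claim that \emph{if a finite group $G$ acts by homeomorphisms on a finite $T_0$--space $X$, there exist a $G$--invariant core $X_C$ of $X$ and a $G$--equivariant retraction $X\to X_C$}; I will apply it with $G=\aut(F)$ and $X=F$. The proof would go by induction on $|X|$: if $X$ is minimal there is nothing to do, and otherwise I pick a beat point of $X$, say a down beat point $x_0$ (the up beat point case being dual), and set $O=G\cdot x_0$. Every element of $O$ is again a down beat point, since homeomorphisms preserve down beat points, and removing the elements of $O$ in an order refining the reverse of the induced partial order shows that $X\setminus O$ is a dbp--retract of $X$. By Theorem~\ref{theo:dbpr_equivalences} there is then a unique retraction $r_O\colon X\to X\setminus O$ of the inclusion with $\iota_O r_O\le\id_X$; since $O$ is $G$--invariant, $gr_Og^{-1}$ has the same three properties for every $g\in G$, so $r_O$ is $G$--equivariant by uniqueness. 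As $X\setminus O$ is $G$--invariant, strictly smaller and has the same core as $X$, applying the inductive hypothesis to it and composing retractions completes the induction.

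With $F_C$ and $r\colon F\to F_C$ as in the claim, each homeomorphism $D(b\le b')$ preserves $F_C$, so $D$ restricts to a functor $D'\colon B\to\aut(F_C)$; by Proposition~\ref{prop:int_D_is_fiber_bundle} the projection $\int D'\to B$ is a fibre bundle over $B$ with fibre $F_C$, and it is a minimal map since its fibres, being homeomorphic to the minimal space $F_C$, have no beat points. It therefore suffices to show that $\int D'\to B$ is a strong deformation retract of $p=\pi_B\colon\int D\to B$ over $B$: granting this, $\int D'\to B$ is a core of $p$, and since cores are unique up to isomorphism over $B$, the core of $p$ is isomorphic over $B$ to this fibre bundle with fibre a core of $F$ (and one reassembles over the components of $B$). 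For the strong deformation retract statement, the inclusion $F_C\hookrightarrow F$ and the equivariant retraction $r$ are natural transformations between the associated functors to $\Top$ — naturality being exactly equivariance with respect to the maps $D(b\le b')$ — so by Proposition~\ref{rema:int_is_functor} they induce maps $\iota\colon\int D'\to\int D$ and $\rho\colon\int D\to\int D'$ over $B$ with $\rho\iota=\id$, and the homotopy $\iota\rho\simeq\id\ (\rel \int D')$ over $B$ is assembled step by step from the successive orbit removals above, each step giving an explicit fibrewise deformation built from the canonical homotopy between comparable maps (which takes only two values, both equivariant). I expect the main obstacle to be precisely this: producing the $\aut(F)$--invariant core together with an equivariant retraction, and checking that these level-wise homotopies through equivariant maps glue into a continuous map $\int D\times I\to\int D$ over $B$ — the latter working because each such homotopy takes only two values, each commuting with every $D(v\le b)$, so that its composite with the continuous map $J(b,F)\to F$, $(v,x)\mapsto D(v\le b)(x)$, is continuous.
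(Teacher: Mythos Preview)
Your argument is correct but takes a genuinely different route from the paper's. You isolate a standalone lemma --- every finite group action on a finite $T_0$--space admits an invariant core with an equivariant retraction --- proved by removing one $G$--orbit of beat points at a time (in an order compatible with the partial order, so that each removal is legitimate) and invoking the uniqueness clause of Theorem~\ref{theo:dbpr_equivalences} to force equivariance of the retraction. The paper instead never mentions orbits: it takes $A$ to be the \emph{smallest} dbp--retract of $F$ and exploits its minimality (via \ref{coro:subseteq_iff_f1_leq_f2} and an auxiliary lemma from \cite{barmak2011algebraic}) to show directly that $r\phi i\in\aut(A)$ and that $\phi i=ir\phi i$, $r\phi=r\phi ir$ for every $\phi\in\aut(F)$; this makes $D'(b\le b')=rD(b\le b')i$ a functor with $i,r$ automatically natural, whence $i_*r_*\le\id$ on $\int D$ and $\pi_B^{D'}$ is the smallest dbp--retract of $\pi_B^D$ by Theorem~\ref{theo:dbpr_equivalences_sobre_B}. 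The paper then simply iterates, alternating smallest dbp-- and ubp--retracts until the fibre becomes minimal. Your approach buys a clean equivariance statement and reaches the core in a single pass, at the cost of having to assemble the strong deformation retraction on $\int D$ from a zigzag of fibrewise homotopies; this does work --- each step gives $(\iota_k r_k)_*$ comparable to the identity over $B$, so Proposition~\ref{prop:f_leq_g_son_hom} applies and one concatenates --- though your closing justification via the basic opens $J(b,F)$ is more elaborate than necessary. The paper's approach is slicker in that it gets naturality for free from a universal property, and it yields as a bonus the intermediate statement that the smallest dbp--retract of a fibre bundle is again a fibre bundle whose fibre is the smallest dbp--retract of $F$.
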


\begin{proof}
By Theorem \ref{theo:classification_fiber_bundles_Grothendieck_construction}, we may assume without loss of generality that $E=B\ltimes_{D}F$ and that $p=\pi_B^{D}\colon B\ltimes_{D}F\to B$ for some functor $D\colon B\to\aut(F)$.

Let $A$ be the smallest dbp--retract of $F$. Let $i\colon A\to F$ be the inclusion map and let $r\colon F\to A$ be the retraction associated to $A$. Let $\phi$ be an automorphism of $F$. Then
\[r\phi^{-1}ir\phi i\leq r\phi^{-1}\phi i=ri=\id_{A}\]
and since $A$ does not have down beat points, it follows that $r\phi^{-1}ir\phi i=\id_{A}$. Interchanging $\phi$ and $\phi^{-1}$ we obtain that $r\phi ir\phi^{-1} i=\id_{A}$ and hence, $r\phi i$ is an automorphism of $A$ with inverse $r \phi^{-1} i$.

Let $D'\colon B\to \aut(A)$ be the functor defined by
\[D'(b\leq b')=rD(b\leq b')i\] 
for every $b,b'\in B$ such that $b\leq b'$. We will prove that $D'$ is indeed a functor. First, it is clear that 
\[D'(b\leq b)=rD(b\leq b)i=r\id_{F}i=ri=\id_{A}\]
for all $b\in B$. Now, given $b,b',b''\in B$ such that $b\leq b'\leq b''$, it follows that 
\[rD(b\leq b'')i=rD(b'\leq b'')D(b\leq b')i\geq rD(b'\leq b'')irD(b\leq b')i\] 
from where we obtain that
\[rD(b\leq b'')i=rD(b'\leq b'')irD(b\leq b')i\] 
by Lemma 8.1.1 of \cite{barmak2011algebraic}. Thus, $D'$ is a functor. 

Now, from \ref{prop:int_D_is_fiber_bundle} we obtain that that $\pi^{D'}_B\colon B\ltimes_{D'}A\to B$ is a fiber bundle over $B$ with fiber $A$. We will prove that $\pi_B^{D'}$ is the smallest dbp--retract of $\pi_B^{D}$.

Let $\phi$ be again an automorphism of $F$ and let $f=\phi i r \phi^{-1} ir$. Then
\[f^{2}=\phi i r \phi^{-1} ir\phi i r \phi^{-1} ir=\phi i (r \phi i)^{-1}(r\phi i) r \phi^{-1} ir=\phi i r \phi^{-1} ir=f.\]
In addition, it is clear that $f\leq ir$. It follows from \ref{theo:dbpr_equivalences} and \ref{coro:subseteq_iff_f1_leq_f2} that the image of $f$ is a dbp--retract of $F$ which is contained in $A$. Since $A$ is the smallest dbp--retract of $F$, it follows from \ref{coro:subseteq_iff_f1_leq_f2} that $f=ir$. Hence, $\phi i (r \phi i)^{-1} r=f=ir$ and since $r$ is an epimorphism, we obtain that $\phi i (r \phi i)^{-1}=i$, or equivalently, $\phi i=i r \phi i$.

In a similar way, taking $g=ir\phi ir\phi^{-1}$ and noting that $g^{2}=g$ and that $g\leq ir$, it can be proved that $g=ir$ and hence, $r\phi=r\phi ir$. This shows that $i$ and $r$ induce natural transformations $\iota\colon j'D'\Rightarrow jD$ and $\rho\colon jD\Rightarrow j'D'$, respectively, where $j\colon \aut(F)\hookrightarrow \Top$ and $j'\colon \aut(A)\hookrightarrow \Top$ are the inclusion functors. 

By \ref{rema:int_is_functor}, $i$ and $r$ induce morphisms of fiber bundles over $B$, $i_*\colon \pi_B^{D'}\to\pi_B^{D}$ and $r_*\colon \pi_B^{D}\to\pi_B^{D'}$. It is clear that the map $i_*\colon B\ltimes_{D'}A\to B\ltimes_{D}F$ is a inclusion map of sets. By functoriality of the topological Grothendieck construction, and since $\rho\iota=\id_{j'D'}$, it follows that $r_*i_*=\id_{B\ltimes_{D'}A}$. Hence, the inclusion map $i_*$ of $B\ltimes_{D'}A$ in $B\ltimes_{D}F$ is a subspace map, and hence, we may consider $B\ltimes_{D'}A$ as a subspace of $B\ltimes_{D}F$.

On the other hand, since $(\iota\rho)_b=ir\leq \id_F$ for all $b\in B$, an explicit calculation shows that 
\[i_*r_*(b,x)=(b,ir(x))\leq (b,x)\] 
for all $b\in B$ and all $x\in F$, from where we obtain that $i_*r_*\leq \id_{B\ltimes_{D}F}$. It follows that $B\ltimes_{D'}A$ is a dbp--retract of $B\ltimes_{D}F$. Moreover, since $i_*$ and $r_*$ are arrows over $B$, it follows from  \ref{theo:dbpr_equivalences_sobre_B} that $\pi_B^{D'}$ is a dbp--retract of $\pi_B^{D}$, and since the fibers of $\pi_B^{D'}$ do not have down beat points, we obtain that $\pi_B^{D'}$ is the smallest dbp--retract of $\pi_B^{D}$, as desired.

Thus, we have proved that the smallest dbp--retract of a fiber bundle between finite T$_0$--spaces is a fiber bundle whose fiber is the smallest dbp--retract of the fibers of the first. An analogous result for ubp--retracts of fiber bundles can be proved in a similar way. By an inductive argument, we obtain that any core of a fiber bundle between finite T$_0$--spaces is again a fiber bundle, whose fibers are homeomorphic to the cores of the fibers of the original fiber bundle.
\end{proof}

The following proposition is easy and will be applied to prove proposition \ref{prop:dbp_p_o_dbp_fibra} which shows a relationship between the beat points of an open or closed map with the beat points of its fibers and of its codomain.

\begin{prop}\label{prop:abta_cerr}
Let $E$ and $B$ Alexandroff spaces and let $p\colon E\to B$ be a continuous map. The following propositions are equivalent.
\begin{enumerate}
\item The map $p$ is open.
\item For all $e\in E$, $p(U_e)=U_{p(e)}$.
\item For all $e\in E$ and all $b\leq p(e)$, $U_e\cap p^{-1}(b)\neq\varnothing$.
\end{enumerate}
In a similar way, the following propositions are equivalent.
\begin{enumerate}[resume]
\item The map $p$ is closed.
\item For all $e\in E$, $p(F_e)=F_{p(e)}$.
\item For all $e\in E$ and all $b\geq p(e)$, $F_e\cap p^{-1}(b)\neq\varnothing$.
\end{enumerate}
\end{prop}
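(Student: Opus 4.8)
The plan is to prove the two equivalences cyclically: $(1)\Rightarrow(2)\Rightarrow(3)\Rightarrow(1)$ and $(4)\Rightarrow(5)\Rightarrow(6)\Rightarrow(4)$. Throughout I will use the standard facts recalled above: $\{U_e : e\in E\}$ is a basis for the topology of $E$, each $U_e$ is open, $U_e=\{x\in E : x\leq e\}$, and, dually, $F_e=\overline{\{e\}}=\{x\in E: x\geq e\}$ is the smallest closed set containing $e$. Since $p$ is a continuous map between Alexandroff spaces it is order-preserving, so $p(U_e)\subseteq U_{p(e)}$ and $p(F_e)\subseteq F_{p(e)}$ for every $e\in E$; these inclusions are the ``easy half'' that will be combined with the hypotheses to upgrade to equalities.

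For the open case: $(1)\Rightarrow(2)$ is immediate, since if $p$ is open then $p(U_e)$ is an open set containing $p(e)$, hence contains $U_{p(e)}$, and the reverse inclusion is just continuity. For $(2)\Rightarrow(3)$, given $b\leq p(e)$ we have $b\in U_{p(e)}=p(U_e)$, so some $x\in U_e$ satisfies $p(x)=b$, i.e.\ $x\in U_e\cap p^{-1}(b)$. For $(3)\Rightarrow(1)$ it suffices to check that $p$ sends each basic open set $U_e$ to an open set, because an arbitrary open subset of $E$ is a union of sets of the form $U_e$ and $p$ commutes with unions; and indeed $(3)$ gives exactly $U_{p(e)}\subseteq p(U_e)$, so $p(U_e)=U_{p(e)}$ is open.

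For the closed case I would run the same argument with $F_e$ in place of $U_e$ and ``closed'' in place of ``open'', using that every closed subset of $E$ is the union of the sets $F_e$ over its points and that $p(F_e)\subseteq F_{p(e)}$ by continuity. Concretely: $(4)\Rightarrow(5)$ because $p(F_e)$ is then closed and contains $p(e)$, hence contains $F_{p(e)}$; and $(5)\Rightarrow(6)$ and $(6)\Rightarrow(4)$ are word-for-word analogues of $(2)\Rightarrow(3)$ and $(3)\Rightarrow(1)$.

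There is no serious obstacle here; the only point that needs a word of justification is the reduction ``$p$ is open $\iff$ $p(U_e)$ is open for all $e$'' (and its closed counterpart), which rests on the basic-open/closed description above together with the identity $p\left(\bigcup_i V_i\right)=\bigcup_i p(V_i)$.
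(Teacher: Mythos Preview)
Your proof is correct and essentially the same as the paper's: the paper cycles $(2)\Rightarrow(1)$, $(1)\Rightarrow(3)$, $(3)\Rightarrow(2)$ while you cycle $(1)\Rightarrow(2)\Rightarrow(3)\Rightarrow(1)$, but the content of each step is identical. The one point worth making explicit in your $(6)\Rightarrow(4)$ step is that an arbitrary union of closed sets in an Alexandroff space is closed (dual to the defining property), so that $p(C)=\bigcup_{e\in C}F_{p(e)}$ is indeed closed.
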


\begin{proof}
It is clear that $(2)\Rightarrow (1)$. We will prove that $(1)\Rightarrow (3)$. Let $e\in E$ and let $b\leq p(e)$. Since $p$ is open, $p(U_e)$ is an open subset of $B$ that contains $p(e)$ and hence $b\in U_{p(e)}\subseteq p(U_e)$. It follows that there exists $e'\in U_e$ such that $p(e')=b$. It is clear then that $e'\in U_e\cap p^{-1}(b)$.

Now we will prove $(3)\Rightarrow (2)$. Let $e\in E$. Since $p$ is continuous we have that $p(U_e)\subseteq U_{p(e)}$. Now, let $b\in U_{p(e)}$ and let $e'\in U_e \cap p^{-1}(b)$. Then $b=p(e')\in p(U_e)$ and the result follows.

The equivalence $(4)\Leftrightarrow (5)\Leftrightarrow (6)$ can be proved in a similar way.
\end{proof}

\begin{prop}\label{prop:dbp_p_o_dbp_fibra}
Let $E$ and $B$ be finite T$_0$--spaces and let $p\colon E\to B$ be a continuous map. If $p$ is an open map and $e_0$ is a down beat point of $E$, then either $p(e_0)$ is down beat point of $B$ or $e_0$ is down beat point of $p$.

In a similar way, if $p$ is a closed map and $e_0$ is an up beat point of $E$, then either $p(e_0)$ is an up beat point of $B$ or $e_0$ is an up beat point of $p$.
\end{prop}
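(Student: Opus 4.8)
The plan is to prove the open / down-beat-point statement and then deduce the closed / up-beat-point statement by the evident dualization: replace $\widehat U$ by $\widehat F$, maxima by minima, $U_a$ by $F_a$, and openness by closedness, invoking the second half of Proposition~\ref{prop:abta_cerr}. So assume $p$ is open and that $e_0$ is a down beat point of $E$, and put $e_1=\max\widehat U_{e_0}^{E}$ (which exists since $e_0$ is a down beat point of $E$). By Remark~\ref{rema:bp_p}, $e_0$ is a down beat point of $p$ exactly when $p(e_0)=p(e_1)$; hence we may assume $p(e_0)\neq p(e_1)$, and it then suffices to show that $p(e_0)$ is a down beat point of $B$, which I would do by proving that $p(e_1)=\max\widehat U_{p(e_0)}^{B}$.

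First, since $e_1<e_0$ and $p$ is continuous, hence order-preserving, we get $p(e_1)\leq p(e_0)$; combined with $p(e_1)\neq p(e_0)$ this yields $p(e_1)<p(e_0)$, so $p(e_1)\in\widehat U_{p(e_0)}^{B}$. To see it is the maximum, take any $b\in B$ with $b<p(e_0)$. Here openness enters: by Proposition~\ref{prop:abta_cerr}, $p(U_{e_0})=U_{p(e_0)}$, and since $b\in U_{p(e_0)}$ there is some $e'\in U_{e_0}$ with $p(e')=b$. Then $e'\leq e_0$, and $e'\neq e_0$ because $p(e')=b\neq p(e_0)$; thus $e'\in\widehat U_{e_0}^{E}$, so $e'\leq e_1$ by maximality of $e_1$, and therefore $b=p(e')\leq p(e_1)$. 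This proves $p(e_1)=\max\widehat U_{p(e_0)}^{B}$, so $p(e_0)$ is a down beat point of $B$, as required.

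The argument is short, so there is no serious obstacle; the one point that must be used carefully is the characterization of open maps of Alexandroff spaces in Proposition~\ref{prop:abta_cerr}, which is exactly what permits an arbitrary element $b<p(e_0)$ of the base to be lifted to an element $e'<e_0$ of $E$, so that the maximality of $e_1$ in $\widehat U_{e_0}^{E}$ can be brought to bear. Without the openness hypothesis this lifting may fail, which is why it cannot simply be dropped.
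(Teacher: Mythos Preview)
Your proof is correct and matches the paper's own argument essentially line for line: set $e_1=\max\widehat U_{e_0}$, dispose of the case $p(e_0)=p(e_1)$ via Remark~\ref{rema:bp_p}, and in the remaining case use Proposition~\ref{prop:abta_cerr} to lift any $b<p(e_0)$ to some $e'\in\widehat U_{e_0}$, whence $b=p(e')\leq p(e_1)$. The paper likewise handles the closed/up-beat-point case by dualization.
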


\begin{proof}
Suppose that $p$ is an open map and that $e_0$ is a down beat point of $E$. Let $e_1=\max(\widehat{U}_{e_0})$. If $p(e_0)=p(e_1)$ the result follows from remark \ref{rema:bp_p}. Thus suppose that $p(e_1)<p(e_0)$. We will prove that $p(e_1)=\max(\hat{U}_{p(e_0)})$. Let $b<p(e_0)$. By \ref{prop:abta_cerr} there exists $e_2\in U_{e_0}\cap p^{-1}(b)$. Since $p(e_2)=b<p(e_0)$ we obtain that $e_2\neq e_0$. Hence, $e_2<e_0$ and thus $e_2\leq e_1$. Then $b=p(e_2)\leq p(e_1)$ and the result follows.

The second part of the proposition can be proved in a similar way.
\end{proof}

\section{Fibrations and beat points}

In the following, all maps will be considered non-empty.

\begin{prop}\label{prop:fib_sacar_bp}
Let $E$ and $B$ be two finite T$_0$--spaces, let $p\colon E\to B$ be a fibration and let $e$ be a beat point of $p$. Then, the restriction $p|\colon E-\{e\}\to B$ of $p$ is a fibration which is fiber homotopy equivalent to $p$.
\end{prop}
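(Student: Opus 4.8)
The plan is to reduce everything to the case of removing a single beat point $e$ from the total space, using the earlier structural results. Write $E' = E - \{e\}$, let $i\colon E' \to E$ be the inclusion and $r\colon E \to E'$ the retraction associated to the removal of the beat point $e$ (of $E$, hence in particular of $p$). By Proposition \ref{prop:r_flecha_sobre_B_sii_e_bp_de_p}, since $e$ is a beat point of $p$, the map $r$ is an arrow over $B$ and $p| = pi$ is a strong deformation retract of $p$ in $\fintopzero/B$; in particular $ri = \id_{E'}$, $ir \simeq \id_E$ over $B$ relative to $E'$, and $i\colon p| \to p$ is a fiber homotopy equivalence with homotopy inverse $r$. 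This gives the ``fiber homotopy equivalent'' half of the statement for free, once we know $p|$ is a fibration.

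So the real content is: if $p\colon E \to B$ is a fibration and $p|\colon E' \to B$ is a strong deformation retract of $p$ over $B$, then $p|$ is again a fibration. First I would show that a retract (in the appropriate sense) of a fibration over $B$ is a fibration. Concretely, I would use the path-lifting characterization: $p$ admits a path-lifting map $\Lambda\colon E \times_p B^I \to E^I$ with $\ev_0 \Lambda = \pi_E$ and $p^I \Lambda = \pi_{B^I}$. One then wants to manufacture a path-lifting map $\Lambda'$ for $p|$. Given $(e', \gamma) \in E' \times_{p|} B^I$, push $e'$ into $E$ via $i$, lift $\gamma$ using $\Lambda$ to a path in $E$ starting at $i(e')$, and then retract that lifted path back to $E'$ by postcomposing with $r$. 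The retraction $r$ being an arrow over $B$ ensures $p \circ r = p| \circ r$ respects the base path; and $r i = \id_{E'}$ ensures the lifted path starts at $e'$. Explicitly, set $\Lambda'(e', \gamma) = r^I\big(\Lambda(i(e'), \gamma)\big)$ (using that $\gamma(0) = p|(e') = p(i(e'))$, so $(i(e'),\gamma)$ really lies in $E \times_p B^I$). Continuity of $\Lambda'$ follows from continuity of $\Lambda$, of $i^I$, of $r^I$ (note $g \mapsto g^X$ is functorial and continuous for the compact-open topology with $X$ exponentiable, and $I$, hence finiteness is not even needed here), and of $\gamma \mapsto \gamma$. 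Then $\ev_0 \Lambda'(e',\gamma) = r(\ev_0 \Lambda(i(e'),\gamma)) = r(i(e')) = e' = \pi_{E'}(e',\gamma)$, and $(p|)^I \Lambda'(e',\gamma) = (p r)^I(\Lambda(i(e'),\gamma))\cdot$, which equals $p^I \Lambda(i(e'),\gamma) = \gamma = \pi_{B^I}(e',\gamma)$ because $pr = p$ (as $r$ is over $B$). Hence $\Lambda'$ is a path-lifting map for $p|$, and $p|$ is a fibration.

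It remains to upgrade from ``$p|$ is a fibration and is fiber homotopy equivalent to $p$'' for a single beat point to the general statement, but in fact the proposition as stated only removes one beat point, so no induction is needed; if one wanted the bp-retract version one would iterate, observing that fiber homotopy equivalence over $B$ is transitive and composes with the single-step equivalences $i$. The main obstacle I anticipate is purely the bookkeeping of compact-open topologies: verifying that the candidate $\Lambda'$ is continuous as a composite, i.e.\ that the maps $E' \times_{p|} B^I \to E \times_p B^I$ induced by $i$ and $E^I \to E'^I$ induced by $r$ are continuous and correctly typed, and that the pullback $E' \times_{p|} B^I$ maps into $E \times_p B^I$ under $i \times \id$. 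Everything else is formal, leaning on Proposition \ref{prop:r_flecha_sobre_B_sii_e_bp_de_p} and the exponential-law characterization of fibrations recalled in the Preliminaries.
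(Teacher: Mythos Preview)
Your proof is correct and follows the same route as the paper: use Proposition~\ref{prop:r_flecha_sobre_B_sii_e_bp_de_p} to see that $p|$ is a retract of $p$ over $B$ (giving the fiber homotopy equivalence), and then conclude that $p|$ is a fibration because retracts of fibrations are fibrations. The only difference is that the paper invokes the latter as a well-known fact stated in the Preliminaries, whereas you spell it out explicitly via the path-lifting map $\Lambda'(e',\gamma)=r^{I}\big(\Lambda(i(e'),\gamma)\big)$; your verification of continuity and of the two path-lifting identities is correct.
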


\begin{proof}
Since $p|$ is retract of $p$ we obtain that $p|$ is fibration. The result then follows by proposition \ref{prop:r_flecha_sobre_B_sii_e_bp_de_p}.
\end{proof}

\begin{theo}\label{theo:fib_al_sacar_dbp}
Let $E$ and $B$ be two finite T$_0$--spaces, let $p\colon E\to B$ be a continuous map and let $e$ be a down beat point of $p$ such that the restriction $p|\colon E-\{e\}\to B$ of $p$ is a fibration. Then $p$ is a fibration.
\end{theo}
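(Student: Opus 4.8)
The plan is to verify the homotopy lifting property of $p$ directly, obtaining a lift for $p$ from one for $p|$ by a single instantaneous correction at time $0$, and then checking continuity of the resulting map by means of the pasting lemma \ref{prop:f_leq_g_g_cont}.

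Write $E'=E-\{e\}$, let $i\colon E'\to E$ be the inclusion map and let $r\colon E\to E'$ be the retraction associated with the removal of the down beat point $e$, so that $r(e)=e_1:=\max\widehat{U}_e$, $r$ restricts to the identity on $E'$, and $ir\le\id_E$. Since $e$ is a down beat point of $p$, by \ref{rema:bp_p} we have $p(e_1)=p(e)$, and therefore $pr=p$. The first step is then, given a space $Z$ and continuous maps $f\colon Z\to E$ and $H\colon Z\times I\to B$ with $Hi_0=pf$, to note that $p|\circ(rf)=p(rf)=pf=Hi_0$, so that the fibration $p|$ furnishes a continuous map $\widetilde{H}'\colon Z\times I\to E'$ with $p|\,\widetilde{H}'=H$ and $\widetilde{H}'i_0=rf$. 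One then defines $\widetilde{H}\colon Z\times I\to E$ by $\widetilde{H}(z,t)=i\widetilde{H}'(z,t)$ for $t>0$ and $\widetilde{H}(z,0)=f(z)$; by construction $\widetilde{H}i_0=f$ and $p\widetilde{H}=H$ (at $t=0$ because $pf=Hi_0$, and at $t>0$ because $p|\,\widetilde{H}'=H$), so the whole matter reduces to showing that $\widetilde{H}$ is continuous.

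For the continuity, the comparable continuous map to use is $f_0:=i\widetilde{H}'\colon Z\times I\to E$. Since $r(x)=x$ for every $x\ne e$ while $r(e)=e_1<e$, the maps $f_0$ and $\widetilde{H}$ agree off the set $f^{-1}(e)\times\{0\}$ and satisfy $f_0\le\widetilde{H}$. Taking $K=f^{-1}(F_e)\times\{0\}$ --- which is closed in $Z\times I$ since $F_e=\overline{\{e\}}$ is closed and $f$ is continuous --- one has $f^{-1}(e)\times\{0\}\subseteq K$, so $f_0$ and $\widetilde{H}$ coincide on $K^c$, while $\widetilde{H}|_K$ is the restriction of $f\circ\mathrm{pr}_Z$ and hence continuous. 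Applying \ref{prop:f_leq_g_g_cont} with $X=Z\times I$, $Y=E$ (an Alexandroff space), the pair $(f_0,\widetilde{H})$ in the roles of $(f,g)$, and this $K$, gives that $\widetilde{H}$ is continuous, which finishes the argument.

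The step I expect to be the crux is precisely the continuity at time $0$: a priori the value of $\widetilde{H}$ jumps from $e$ to $e_1$ as $t$ leaves $0$, and the point is that this is harmless because $e_1<e$ and because it occurs only over the closed set $f^{-1}(F_e)\times\{0\}$, which is exactly the situation covered by \ref{prop:f_leq_g_g_cont}; the rest is routine verification. (An analogous statement for up beat points would instead be handled with \ref{prop:f_leq_g_f_cont} and an open set in place of $K$, but it is not needed for the present theorem.)
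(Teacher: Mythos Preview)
Your proof is correct and follows essentially the same approach as the paper: use the retraction $r$ to push the initial data into $E'$, lift there, then correct at $t=0$ and invoke Proposition~\ref{prop:f_leq_g_g_cont}. The paper phrases this in terms of path-lifting maps (taking $K$ to be the whole $t=0$ slice $E\times_p B^{I}\times\{0\}$), while you verify the homotopy lifting property directly and use the smaller closed set $K=f^{-1}(F_e)\times\{0\}$; either choice works, and yours is a harmless refinement.

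One caveat: your closing parenthetical remark, suggesting that the up-beat-point analogue ``would instead be handled with \ref{prop:f_leq_g_f_cont} and an open set in place of $K$'', is misleading. The analogous statement for up beat points is in fact \emph{false}, as the paper shows by explicit counterexample in Subsection~\ref{subs:fib_no_cerrada}. The attempted dualization breaks down precisely because the set on which the two maps differ is contained in a $t=0$ slice, which is closed rather than open, so Proposition~\ref{prop:f_leq_g_f_cont} does not apply. You should drop or amend that remark.
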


\begin{proof}
Let $\Lambda\colon (E-\{e\})\times_{p|} B^{I}\to (E-\{e\})^{I}$ be a path-lifting map for $p|$. Let $\lambda=\Lambda^{\flat}\colon (E-\{e\})\times_{p|} B^{I}\times I\to E-\{e\}$ be the map induced by $\Lambda$ and the exponential law. Let $i\colon E-\{e\}\to E$ be the inclusion map and let $r\colon E\to E-\{e\}$ be the retraction associated to the removal of the beat point $e$. It is easy to see that the arrows $r$, $\id_B$ and $\id_{B^{I}}$ induce a morphism from the diagram $E\xrightarrow{p}B\xleftarrow{\ev_0}B^{I}$ to the diagram  $E-\{e\}\xrightarrow{p|}B\xleftarrow{\ev_0}B^{I}$ and hence, a continuous map $R\colon E\times_p B^{I}\to (E-\{e\})\times_{p|}B^{I}$ that sends the pair $(e,\gamma)\in E\times_p B^{I}$ to $(r(e),\gamma)$. The map $i\lambda\circ(R\times \id_I)\colon E\times_p B^{I}\times I\to E$ is clearly continuous and sends the element $(e,\gamma,t)$ of $E\times_p B^{I}\times I$ to $\lambda(r(e),\gamma,t)$. We define $\lambda'\colon E\times_p B^{I}\times I\to E$ by 
\[
\lambda'(e,\gamma,t)=\left\{
\begin{array}{ll}
e&\text{if $t=0$,}\\
\lambda(r(e),\gamma,t)&\text{if $t>0$.}
\end{array}
\right.
\]
Observe that $\lambda'\geq i\lambda\circ (R\times \id_I)$ and that $\lambda'$ coincides with $i\lambda\circ (R\times \id_I)$ on $E\times_p B^{I}\times (0,1]$. On the other hand, the restriction of $\lambda'$ to $E\times_p B^{I}\times \{0\}$ is the projection to $E$ and hence it is a continuous map. By \ref{prop:f_leq_g_g_cont}, it follows that $\lambda'$ is a continuous map. It is easy to verify that $\lambda'$ induces a path-lifting map $\Lambda'\colon E\times_p B^{I}\to E^{I}$ for $p$.
\end{proof}

We will see in subsection \ref{subs:fib_no_cerrada} that the previous proposition does not hold if we change down beat points to up beat points.

\begin{coro}\label{coro:dbpr_minimo_fibracion}
Let $E$ and $B$ be two finite T$_0$--spaces and let $p\colon E\to B$ be a continuous map.
The following propositions are equivalent.
\begin{enumerate}
\item $p$ is a fibration.
\item Every dbp--retract of $p$ is fibration.
\item There exists a dbp--retract of $p$ which is fibration.
\end{enumerate}
\end{coro}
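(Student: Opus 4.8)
The plan is to deduce the three-way equivalence from the two single-step results already in hand, namely Proposition \ref{prop:fib_sacar_bp} (removing a beat point of a fibration yields a fibration fiber homotopy equivalent to it) and Theorem \ref{theo:fib_al_sacar_dbp} (if a down beat point can be removed from $p$ and the result is a fibration, then $p$ is a fibration), by iterating along a chain $E=E_0\supseteq E_1\supseteq\cdots\supseteq E_n=E'$ witnessing that $p'\colon E'\to B$ is a dbp--retract of $p$. The implication $(2)\Rightarrow(3)$ is immediate once we observe that $p$ is itself a dbp--retract of $p$ (take the empty chain, $n=0$, which is allowed since the definition uses $n\in\N_0$), so the set of dbp--retracts of $p$ is nonempty. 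So the real content is $(1)\Rightarrow(2)$ and $(3)\Rightarrow(1)$.

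For $(1)\Rightarrow(2)$, suppose $p$ is a fibration and let $p'\colon E'\to B$ be any dbp--retract, with a chain $E=E_0\supseteq E_1\supseteq\cdots\supseteq E_n=E'$ where each $E_{k}$ is obtained from $E_{k-1}$ by removing a single down beat point $e_{k}$ of the restriction $p_{k-1}\colon E_{k-1}\to B$ of $p$. I would argue by induction on $k$ that each $p_k\colon E_k\to B$ is a fibration: $p_0=p$ is a fibration by hypothesis, and if $p_{k-1}$ is a fibration then $e_k$ is in particular a beat point of $p_{k-1}$, so Proposition \ref{prop:fib_sacar_bp} applies and $p_k=p_{k-1}|$ is a fibration. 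Hence $p'=p_n$ is a fibration. (One should note here that a down beat point of $p_{k-1}$ is a down beat point of $E_{k-1}$ and of the relevant fiber, hence a beat point of $p_{k-1}$, so Proposition \ref{prop:fib_sacar_bp} does apply; this is just unwinding the definition.)

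For $(3)\Rightarrow(1)$, suppose some dbp--retract $p'\colon E'\to B$ of $p$ is a fibration, and fix a witnessing chain $E=E_0\supseteq\cdots\supseteq E_n=E'$ as above. Now I would run the induction the other way, \emph{downward} on $k$ from $n$ to $0$: $p_n=p'$ is a fibration by hypothesis, and if $p_k$ is a fibration then, since $p_k$ is exactly the restriction of $p_{k-1}$ obtained by removing the down beat point $e_k$ of $p_{k-1}$, Theorem \ref{theo:fib_al_sacar_dbp} gives that $p_{k-1}$ is a fibration. After $n$ steps we conclude that $p_0=p$ is a fibration, which closes the cycle of implications. The only point requiring a little care — and the place I would expect a referee to look — is checking that ``$e_k$ is a down beat point of $p_{k-1}$'' in the chain really does supply the hypothesis of Theorem \ref{theo:fib_al_sacar_dbp} with $p_{k-1}$ in the role of $p$ and $p_k$ in the role of $p|$; but this is exactly the definition of a dbp--retract, so no genuine obstacle arises, and the proof is essentially a bookkeeping argument combining the two preceding results.
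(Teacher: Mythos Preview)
Your proposal is correct and follows essentially the same approach as the paper: the authors prove $(1)\Rightarrow(2)$ from Proposition~\ref{prop:fib_sacar_bp}, note $(2)\Rightarrow(3)$ is immediate, and obtain $(3)\Rightarrow(1)$ from Theorem~\ref{theo:fib_al_sacar_dbp} via an inductive argument along the chain, exactly as you do. Your write-up simply makes the inductions explicit.
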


\begin{proof}
The implication $(1)\Rightarrow (2)$ follows from \ref{prop:fib_sacar_bp} and the implication $(2)\Rightarrow (3)$ is immediate. The implication $(3)\Rightarrow(1)$ follows easily from \ref{theo:fib_al_sacar_dbp} applying an inductive argument.
\end{proof}

The following result generalizes \ref{prop_stong}.

\begin{theo}\label{theo:stong_para_fib}
Let $E$ and $B$ be finite T$_0$--spaces, let $p\colon E\to B$ be a fibration and let $q\colon E\to B$ be a continuous map which is homotopic to $p$. If $E$ is a minimal map, then $q=p$.
\end{theo}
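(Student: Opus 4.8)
The plan is to reduce the statement to Stong's rigidity result \ref{prop_stong} by lifting the homotopy $q\simeq p$ along the fibration $p$ and analysing the resulting self-map of the total space; note that for $p=\id_E$ this recovers exactly part (3) of \ref{prop_stong}. Concretely, I would pick a homotopy $H\colon E\times I\to B$ between $p$ and $q$; reversing it if necessary, we may assume $Hi_0=p$ and $Hi_1=q$. Since $Hi_0=p=p\circ\id_E$, the homotopy lifting property of $p$ — applied with the total space $E$ itself as test space, with $\id_E\colon E\to E$ as the map lifted at time $0$, and with $H$ as the downstairs homotopy — yields a continuous map $\widetilde H\colon E\times I\to E$ such that $p\widetilde H=H$ and $\widetilde Hi_0=\id_E$.

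Next, set $g=\widetilde Hi_1\colon E\to E$. Then $\widetilde H$ is a homotopy $\id_E\simeq g$ in $\Top$, while $pg=p\widetilde Hi_1=Hi_1=q$. Since $E$ is minimal, applying part (3) of Proposition \ref{prop_stong} to $g$ gives $g=\id_E$, and therefore $q=pg=p\circ\id_E=p$, which is the assertion.

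The step that needs the most care — more a subtlety than an obstacle — is the choice of which rigidity statement to invoke. It is tempting to try to finish via the fibrewise version \ref{coro:p_mnl_entonces_h_es_id}, but the lifted self-map $g$ of $E$ is in general not an arrow over $B$: one only knows $pg=q$, and at the outset $q$ need not equal $p$, so that corollary is not available. The argument therefore has to use the minimality of the space $E$ together with Stong's original result \ref{prop_stong}, which is exactly why the hypothesis is placed on $E$ rather than merely requiring $p$ to be a minimal map. The remaining points — the symmetry of the homotopy relation, so that $H$ may be assumed to start at $p$, and the fact that the homotopy lifting property is being instantiated at the test space $X=E$ — are routine bookkeeping.
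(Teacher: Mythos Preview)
Your argument is correct and matches the paper's proof essentially step for step: lift the homotopy $H$ with $Hi_0=p$ along $p$ starting from $\id_E$, obtain $\widetilde H$ with $\widetilde H i_0=\id_E$ and $p\widetilde H=H$, conclude $\widetilde H i_1=\id_E$ via Proposition~\ref{prop_stong}(3) since $E$ is minimal, and read off $q=p\widetilde H i_1=p$. Your additional remark about why Corollary~\ref{coro:p_mnl_entonces_h_es_id} is not the right tool here is a nice clarification but not needed for the proof itself.
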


\begin{proof}
Let $H\colon p\simeq q\colon E\times I\to B$ be a homotopy. Then $Hi_0=p=p\id_E$ and hence there exists $\tilde{H}\colon E\times I\to E$ such that $\tilde{H}i_0=\id_E$ and $p\tilde{H}=H$. Since $E$ is minimal and $\tilde{H}i_1\simeq \tilde{H}i_0=\id_E$, we obtain that $\tilde{H}i_1=\id_E$ by \ref{prop_stong}. Then $q=Hi_1=p\tilde{H}i_1=p\id_E=p$.
\end{proof}

\begin{coro}\label{coro:E_mnl_entonces_B_mnl}
Let $E$ and $B$ be finite T$_0$--spaces and let $p\colon E\to B$ be a fibration. If $E$ is minimal and $B$ is connected, then $B$ is minimal.
\end{coro}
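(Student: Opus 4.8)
The plan is to argue by contradiction. Suppose $B$ is not minimal and fix a beat point $b_0$ of $B$. The strategy is to manufacture a continuous map $q\colon E\to B$ homotopic to $p$ whose image misses $b_0$, and then to invoke Theorem \ref{theo:stong_para_fib} (using that $E$ is minimal and $p$ is a fibration) to conclude $q=p$, which will contradict the surjectivity of $p$.

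First I would record that $p$ is surjective. Since $B$ is a connected finite T$_0$--space it is path-connected: any two comparable points $x\le y$ are joined by the path $\eta(x\le y)$ of Definition \ref{defi:eta}, and an arbitrary pair of points of $B$ is joined by concatenating such paths along a fence. Now pick $e_1\in E$ (possible since $E$ is non-empty) and put $b_1=p(e_1)$. Given any $b\in B$, choose a path $\gamma\colon I\to B$ from $b_1$ to $b$ and apply the homotopy lifting property of $p$ with respect to the one-point space $\{*\}$ to the map $\{*\}\to E$ sending $*$ to $e_1$ and the homotopy $(*,t)\mapsto\gamma(t)$, which agrees at $t=0$ with $\gamma(0)=b_1=p(e_1)$; a lift yields a path $\widetilde\gamma$ in $E$ with $p\bigl(\widetilde\gamma(1)\bigr)=\gamma(1)=b$. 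Hence $b\in\im p$, so $\im p=B$.

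Next, since $b_0$ is a beat point of $B$, Stong's theorem \cite{stong1966finite} gives that $B-\{b_0\}$ is a strong deformation retract of $B$; let $i\colon B-\{b_0\}\to B$ be the inclusion and $\rho\colon B\to B-\{b_0\}$ a retraction with $i\rho\simeq\id_B$. Set $q=i\rho p\colon E\to B$. Composing the homotopy $i\rho\simeq\id_B$ with $p$ shows $q\simeq p$, while $\im q\subseteq\im i=B-\{b_0\}$, so $b_0\notin\im q$. Applying Theorem \ref{theo:stong_para_fib} to the fibration $p$ and the homotopic map $q$ (here we use that $E$ is minimal) gives $q=p$, whence $b_0\notin\im p$. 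This contradicts $\im p=B$, so $B$ has no beat points; that is, $B$ is minimal.

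I do not expect a genuine obstacle. The only point requiring a short argument is the surjectivity of $p$, which reduces to the path-connectedness of a connected finite space together with the homotopy lifting property applied to a single point; everything else is a direct application of Theorem \ref{theo:stong_para_fib} and Stong's characterization of beat points.
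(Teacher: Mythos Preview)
Your proof is correct and follows essentially the same approach as the paper: both arguments use the surjectivity of $p$ (from path-connectedness of $B$ and the homotopy lifting property), compose $p$ with the retraction $i\rho$ associated to removing a beat point to obtain a map homotopic to $p$ whose image misses $b_0$, and invoke Theorem~\ref{theo:stong_para_fib} to force a contradiction. The only cosmetic difference is that the paper first isolates the statement ``every $f\colon B\to B$ homotopic to $\id_B$ equals $\id_B$'' and then applies it, whereas you argue directly by contradiction from a single beat point.
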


\begin{proof}
We will prove that the unique continuous map $f\colon B\to B$ which is homotopic to $\id_B$ is $\id_B$. Let $f\colon B\to B$ be a continuous map such that $f\simeq \id_B$. Then $p\simeq fp$, and since $E$ is minimal, we deduce that $p=fp$ by \ref{theo:stong_para_fib}. And since (non-empty) Hurewicz fibrations over path-connected spaces are surjective it follows that $f=\id_B$ as desired.

Now let $B'$ be a bp--retract of $B$. Applying \ref{theo:dbpr_equivalences} (or its dual version for up beat points), we obtain that there exists a continuous map $f$ which is homotopic to $\id_B$ and which satisfies that $f(B)=B'$. By the result proved in the previous paragraph we obtain that $f=\id_B$, from where it follows that $B'=B$. Therefore $B$ is minimal.
\end{proof}

\begin{theo}\label{theo:Ue_cap_fibra_b}
Let $E$ and $B$ be finite T$_0$--spaces and let $p\colon E\to B$ be a fibration. Let $e\in E$ and let $b\in U_{p(e)}$. Then $U_e\cap p^{-1}(b)\neq\varnothing$.
\end{theo}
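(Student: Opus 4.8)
The plan is to use the homotopy lifting property of $p$ applied to the specific path $\eta(b \leq p(e))$ that witnesses the order relation $b \leq p(e)$ in the Alexandroff space $B$, lifting it to a path in $E$ starting at $e$, and then to show that the endpoint of this lifted path lies in $U_e \cap p^{-1}(b)$.

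More concretely, first I would consider the one-point space $\ast$, the map $f \colon \ast \to E$ with $f(\ast) = e$, and the homotopy $H \colon \ast \times I \to B$ given (after identifying $\ast \times I$ with $I$) by $H = \eta(p(e) \geq b)$, the inverse path of $\eta(b \leq p(e))$ from Definition~\ref{defi:eta}; this path starts at $p(e)$ and is constantly $b$ for $t > 0$. Since $H i_0 = p(e) = p f(\ast)$, the homotopy lifting property of $p$ yields a lift $\widetilde{H} \colon I \to E$ with $p \widetilde{H} = H$ and $\widetilde{H}(0) = e$. Writing $\gamma = \widetilde{H}$, we have a path in $E$ with $\gamma(0) = e$ and $p(\gamma(t)) = b$ for all $t > 0$.

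The key step — and where the structure of Alexandroff/finite spaces enters — is to extract from this path a point of $U_e$. Here I would invoke the local structure near $t = 0$: since $\gamma$ is continuous and $\gamma(0) = e$, the preimage $\gamma^{-1}(U_e)$ is an open subset of $I$ containing $0$, so it contains some interval $[0, \varepsilon)$ with $\varepsilon > 0$. Picking any $t_0$ with $0 < t_0 < \varepsilon$, the point $e' = \gamma(t_0)$ satisfies $e' \in U_e$ (i.e.\ $e' \leq e$) and $p(e') = b$, hence $e' \in U_e \cap p^{-1}(b)$, which is therefore nonempty.

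**The main obstacle** I anticipate is purely notational rather than conceptual: one must be careful about which of the two paths $\eta(b \leq p(e))$ or $\eta(p(e) \geq b)$ to lift, since the homotopy lifting problem fixes the value at $t = 0$ (which must be $p(e)$), forcing the use of $\eta(p(e) \geq b)$ so that the lifted path \emph{begins} at $e$ and then moves \emph{down} into the fiber over $b$. With the correct orientation, the argument is short and uses only the homotopy lifting property together with the fact that $U_e$ is open — no finiteness beyond Alexandroff-ness is actually needed. One might alternatively phrase the first step via a path-lifting map $\Lambda$ for $p$ (which exists by the exponential-law characterization of fibrations), applying it to the pair $(e, \eta(p(e) \geq b)) \in E \times_p B^I$; both formulations are equivalent.
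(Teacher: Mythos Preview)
Your proof is correct and actually cleaner than the paper's. The paper does not lift a single path from a one-point space; instead it uses the Sierpinski space $\S=\{0,1\}$ (with $0<1$) as the test space, building a homotopy $H\colon \S\times I\to B$ that equals $p(e)$ on the closed set $\{(s,t):t=0\text{ or }s=1\}$ and equals $b$ elsewhere, lifting it from the constant map $C_e$, and then using that any open neighborhood of $(1,0)$ in $\S\times I$ contains a tube $\S\times[0,\varepsilon]$ to conclude that $\tilde H(0,\varepsilon)\in U_e\cap p^{-1}(b)$. Your one-point argument is the same idea stripped of the extra Sierpinski point: the step ``$\gamma^{-1}(U_e)$ contains $[0,\varepsilon)$'' is exactly the tube argument specialized to a singleton. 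What your version buys is economy and the observation that only the path-lifting property (indeed only Alexandroff-ness of $E$, not finiteness) is needed here; the paper's two-point test space gains nothing for this statement, although the multi-point technique does become genuinely necessary in the dual result Theorem~\ref{theo:Fe_cap_fibra_b}, where an infinite Alexandroff test space is used.
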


\begin{proof}
If $p(e)=b$ the result follows, since, in that case $e\in U_e\cap p^{-1}(p(e))$. Thus suppose that $b<p(e)$. Let $\S=\{0,1\}$ be the Sierpinski space with $0<1$ and let $H\colon \S\times I\to B$ the map defined by
\[
H(s,t)=\left\{
\begin{array}{ll}
p(e)&\text{if $t=0$ or $s=1$,}\\
b&\text{in other case.}
\end{array}
\right.
\]
It is clear that $H^{-1}(U_b)=\{0\}\times (0,1]$ and hence, $H$ is a continuous map. Let $i_0\colon \S\to\S\times I$ be defined by $i_0(x)=(x,0)$. Then $Hi_0=C_{p(e)}=pC_e$ and hence there exists a continuous map $\tilde{H}\colon \S\times I\to E$ such that $\tilde{H}i_0=C_e$ and $p\tilde{H}=H$. Since $\tilde{H}(1,0)=e$, we obtain that $(1,0)\in \tilde{H}^{-1}(U_e)$ and hence, there exists $\varepsilon>0$ such that $\S\times [0,\varepsilon]\subseteq \tilde{H}^{-1}(U_e)$. In particular, $\tilde{H}(0,\varepsilon)\in U_e$. But $p\tilde{H}(0,\varepsilon)=H(0,\varepsilon)=b$ and then $\tilde{H}(0,\varepsilon)\in p^{-1}(b)$. Hence $\tilde{H}(0,\varepsilon)\in U_e\cap p^{-1}(b)$.
\end{proof}

\begin{coro}\label{coro:p_abta}
Let $E$ and $B$ be finite T$_0$--spaces and let $p\colon E\to B$ be a fibration. Then $p$ is an open map.

If, in addition, $B$ is connected, then $p$ is a quotient map.
\end{coro}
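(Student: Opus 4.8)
The plan is to read off openness of $p$ directly from Theorem~\ref{theo:Ue_cap_fibra_b} by matching it against the characterization of open maps in Proposition~\ref{prop:abta_cerr}, and then to upgrade ``open'' to ``quotient'' using the surjectivity of non-empty fibrations over path-connected spaces.

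First I would note that for $e\in E$ and $b\in B$ the condition $b\in U_{p(e)}$ is literally the condition $b\leq p(e)$. Hence Theorem~\ref{theo:Ue_cap_fibra_b} asserts exactly that, for every $e\in E$ and every $b\leq p(e)$, one has $U_e\cap p^{-1}(b)\neq\varnothing$; this is clause~(3) of Proposition~\ref{prop:abta_cerr}. Since $E$ and $B$ are finite T$_0$--spaces, and in particular Alexandroff spaces, that proposition applies and clause~(3) is equivalent to clause~(1), i.e.\ to $p$ being open. This establishes the first assertion.

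For the second assertion, assume in addition that $B$ is connected. A finite topological space is connected if and only if it is path-connected (a fence of comparabilities in the associated poset can be realized by concatenating the paths $\eta(x\leq y)$ of Definition~\ref{defi:eta}), so $B$ is path-connected. As recalled in Section~2, a non-empty Hurewicz fibration over a path-connected space is surjective, and by the standing convention all maps here are non-empty; hence $p$ is surjective. A continuous open surjection is a quotient map, so $p$ is a quotient map, as claimed.

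There is essentially no obstacle in this argument; the only points that need care are the identification $U_{p(e)}=\{b\in B: b\leq p(e)\}$, which makes Theorem~\ref{theo:Ue_cap_fibra_b} coincide verbatim with clause~(3) of Proposition~\ref{prop:abta_cerr}, and the two standard facts invoked for the second part (connectedness equals path-connectedness for finite spaces, and open continuous surjections are quotient maps).
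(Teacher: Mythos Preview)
Your proof is correct and follows exactly the route indicated in the paper: Theorem~\ref{theo:Ue_cap_fibra_b} gives precisely condition~(3) of Proposition~\ref{prop:abta_cerr}, whence $p$ is open, and the quotient assertion then follows from surjectivity (via path-connectedness of $B$) plus openness. You have simply filled in the details of what the paper records as ``Follows from \ref{theo:Ue_cap_fibra_b} and \ref{prop:abta_cerr}.''
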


\begin{proof}
Follows from \ref{theo:Ue_cap_fibra_b} and \ref{prop:abta_cerr}.
\end{proof}

In \cite{cianci2019combinatorial} we gave the following definition.

\begin{definition}[{\cite[Definition 3.15]{cianci2019combinatorial}}]
Let $X$ be a finite T$_0$--space and let $f\colon X\to X$ be a continuous function such that $f\leq\id_X$. We define $f^{\infty}$ by $f^{N}$ where $N\in\N$ is such that $f^{N}=f^{N+1}$. 
\end{definition}

It is easy to verify that, under the assumptions of the previous definition, the map $f^{\infty}$ is well defined \cite[p.240]{cianci2019combinatorial}.

The following lemma will be applied to prove \ref{theo:E_d_dbpr_de_p-1(B_d)}.

\begin{lemma}\label{lemm:dbpr_minimo_U_id_X}
Let $X$ be a finite T$_0$--space, let $A$ be the smallest dbp--retract of $X$ and let $f\colon X\to X$ be the map associated to the dbp--retract $A$ given by \ref{theo:dbpr_equivalences}, that is, the unique continuous map from $X$ to itself such that $f\leq \id_X$, $f^{2}=f$ and $f(X)=A$. Then $f$ is the minimum element of $U_{\id_X}$.
\end{lemma}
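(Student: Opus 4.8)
The plan is to first identify the minimal open set $U_{\id_X}$ inside the function space $X^X$. Since $X$ is a finite T$_0$--space, $X^X$ is again a finite T$_0$--space whose associated partial order is the pointwise order, so that $U_{\id_X}=\{g\in X^X\tq g\leq\id_X\}$. As $f\leq\id_X$ holds by hypothesis, we have $f\in U_{\id_X}$, and it remains only to check that $f\leq g$ for every continuous map $g\colon X\to X$ with $g\leq\id_X$.

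So I would fix such a $g$ and pass to the map $g^{\infty}$. First I would record the routine facts that every iterate $g^{k}$ satisfies $g^{k}\leq\id_X$, that $g^{\infty}$ is idempotent, and that $g^{\infty}=g^{N-1}\circ g\leq g$, where $N\in\N$ is chosen with $g^{N}=g^{N+1}$ (the last inequality using $g^{N-1}\leq\id_X$). Since $g^{\infty}$ is a continuous idempotent map bounded above by $\id_X$, the implication $(2)\Rightarrow(1)$ of Theorem \ref{theo:dbpr_equivalences} shows that $g^{\infty}(X)$ is a dbp--retract of $X$.

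Now, since $A$ is the smallest dbp--retract of $X$, it is in particular a dbp--retract of $g^{\infty}(X)$, and therefore $f(X)=A\subseteq g^{\infty}(X)$. Both $f$ and $g^{\infty}$ are continuous, idempotent and bounded above by $\id_X$, so Corollary \ref{coro:subseteq_iff_f1_leq_f2} applies and yields $f\leq g^{\infty}$. Combining this with $g^{\infty}\leq g$ gives $f\leq g$, which is what we wanted, and hence $f$ is the minimum element of $U_{\id_X}$. There is no serious obstacle in this argument; the only points requiring a little attention are the elementary verification that $g^{\infty}$ is idempotent and lies below $g$, together with the observation that the smallest dbp--retract of $X$ is contained in every dbp--retract of $X$, which is immediate from the definition of the order on the set of dbp--retracts.
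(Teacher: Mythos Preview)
Your proof is correct and follows essentially the same route as the paper's: take $g\in U_{\id_X}$, pass to the idempotent $g^{\infty}\leq g$ with $g^{\infty}\leq\id_X$, conclude from Theorem~\ref{theo:dbpr_equivalences} that $g^{\infty}(X)$ is a dbp--retract so that $A\subseteq g^{\infty}(X)$, and then apply Corollary~\ref{coro:subseteq_iff_f1_leq_f2} to get $f\leq g^{\infty}\leq g$. The only cosmetic difference is that the paper cites an external lemma for the basic properties of $g^{\infty}$, whereas you verify them directly.
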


\begin{proof}
Let $g\in U_{\id_X}$. By Lemma 3.16 of \cite{cianci2019combinatorial}, $g^{\infty}\leq\id_X$ and $g^{\infty}\circ g^{\infty}=g^{\infty}$. 
By \ref{theo:dbpr_equivalences}, $g^{\infty}(X)$ is a dbp--retract of $X$ and thus $f(X)=A\subseteq g^{\infty}(X)$. By  \ref{coro:subseteq_iff_f1_leq_f2}, $f\leq g^{\infty}\leq g$. The result follows.
\end{proof}

\begin{theo}\label{theo:E_d_dbpr_de_p-1(B_d)}
Let $E$ and $B$ be finite T$_0$--spaces such that $B$ is connected. Let $p\colon E\to B$ be a fibration and let $E_d$ and $B_d$ the smallest dbp--retracts of $E$ and $B$ respectively. Then $E_d$ is a dbp--retract of $p^{-1}(B_d)$.
\end{theo}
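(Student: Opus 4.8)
The plan is to build the required retraction of $p^{-1}(B_d)$ onto $E_d$ from the map $f_E\colon E\to E$ provided by Theorem \ref{theo:dbpr_equivalences} for the dbp--retract $E_d$ (so $f_E\le\id_E$, $f_E^{2}=f_E$ and $f_E(E)=E_d$), together with the analogous map $f_B\colon B\to B$ for $B_d$. The whole argument rests on one nontrivial fact: that $p(E_d)\subseteq B_d$. Granting this, $f_E$ restricts to a continuous map $p^{-1}(B_d)\to p^{-1}(B_d)$ (its image $E_d$ being contained in $p^{-1}(B_d)$), this restriction is idempotent, lies below $\id_{p^{-1}(B_d)}$ and has image $E_d$, so the implication $(2)\Rightarrow(1)$ of Theorem \ref{theo:dbpr_equivalences} immediately yields that $E_d$ is a dbp--retract of $p^{-1}(B_d)$.

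To establish $p(E_d)\subseteq B_d$ I would first produce a continuous map $h\colon E\to E$ with $h\le\id_E$ and $p\circ h=f_B\circ p$; this is the only place where the hypothesis that $p$ is a fibration is used. Since $E$ is finite, $p^{E}\colon E^{E}\to B^{E}$ is again a Hurewicz fibration, and $E^{E}$, $B^{E}$ are finite spaces carrying the pointwise order. From $f_B\le\id_B$ we get $f_B\circ p\le p$ in $B^{E}$, so Definition \ref{defi:eta} gives a continuous path $\eta(p\ge f_B\circ p)\colon I\to B^{E}$ starting at $p=p^{E}(\id_E)$. Applying the homotopy lifting property of $p^{E}$ to this path, with initial condition $\id_E$, yields a path $\widetilde H\colon I\to E^{E}$ with $\widetilde H(0)=\id_E$ and $p^{E}(\widetilde H(t))=f_B\circ p$ for every $t>0$; since $E^{E}$ is finite, $\widetilde H(t)\in U_{\id_E}$ for all sufficiently small $t$, so choosing such a $t>0$ and setting $h=\widetilde H(t)$ does it. (Alternatively, one can lift the analogous homotopy through $p$ itself, using the finiteness of $E$ to obtain a lift that drops below $\id_E$ uniformly for small times.)

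With $h$ at hand we have $h\in U_{\id_E}$, and Lemma \ref{lemm:dbpr_minimo_U_id_X} says that $f_E$ is the minimum of $U_{\id_E}$; hence $f_E\le h$ and therefore $p\circ f_E\le p\circ h=f_B\circ p$. For $y\in E_d$ one has $f_E(y)=y$, so $p(y)=p(f_E(y))\le f_B(p(y))\le p(y)$, which forces $f_B(p(y))=p(y)$; as $f_B(B)=B_d$ and $f_B$ is idempotent, this says exactly that $p(y)\in B_d$. Hence $p(E_d)\subseteq B_d$, and the proof concludes as in the first paragraph.

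The main obstacle is the construction of $h$. The bookkeeping with idempotents below the identity is routine, but the inclusion $p(E_d)\subseteq B_d$ genuinely fails for arbitrary continuous $p$ (for instance for the natural map from a two--point discrete space onto the Sierpinski space), so the homotopy lifting property must really be invoked; some care is also needed when extracting $h$ from the lifted path at a positive time, where one exploits the finiteness of the mapping space $E^{E}$.
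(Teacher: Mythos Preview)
Your proof is correct and follows essentially the same approach as the paper. Both arguments use that $p^{E}\colon E^{E}\to B^{E}$ is a fibration between finite T$_0$--spaces, combined with Lemma~\ref{lemm:dbpr_minimo_U_id_X}, to establish the key inclusion $p(E_d)\subseteq B_d$; the final passage from this inclusion to the dbp--retract statement is identical (the paper cites an external proposition, you spell out the same argument via Theorem~\ref{theo:dbpr_equivalences}).

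The only difference is in packaging: the paper invokes Corollary~\ref{coro:p_abta} to say that the fibration $p^{E}$ is open, hence sends the minimal element $f_E\in E^{E}$ to a minimal element of $B^{E}$, which forces $f_B\circ p\circ f_E=p\circ f_E$; you instead lift the path $\eta(p\ge f_B\circ p)$ directly through $p^{E}$ to obtain $h\le\id_E$ with $p\circ h=f_B\circ p$, and then use minimality of $f_E$ in $U_{\id_E}$. Your construction of $h$ is essentially the content of Theorem~\ref{theo:Ue_cap_fibra_b} (applied to $p^{E}$) or, equivalently, the later Corollary~\ref{coro:g=ph} specialized to $X=E$, $f=\id_E$, $g=f_B\circ p$. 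So the paper factors through results it has already established, while you carry out the same lifting argument inline; the underlying idea is the same.
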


\begin{proof}
Let $g\colon E\to E$ and $h\colon B\to B$ be the continuous maps which are associated to the dbp--retracts $E_d$ and $B_d$ respectively, given by \ref{theo:dbpr_equivalences}. By \ref{lemm:dbpr_minimo_U_id_X}, $g$ and $h$ are the minimum elements of $U_{\id_E}$ and $U_{\id_B}$ respectively. 

Since $E$ is a finite space, $p^{E}\colon E^{E}\to B^{E}$ is a Hurewicz fibration between finite T$_0$--spaces. It follows from \ref{coro:p_abta} that $p^{E}$ is an open map. In particular, $p^{E}$ sends each minimal element of $E^{E}$ to a minimal element of $B^{E}$. Since $g$ is a minimal element of $E^{E}$, $pg=p^{E}(g)$ is a minimal element of $B^{E}$. On the other hand, as $h\leq\id_B$, we have that $hpg\leq pg$ from where we obtain that $hpg=pg$. Hence $p(E_d)=p(g(E))=h(p(g(E))\subseteq \im h=B_d$ and then $E_d\subseteq p^{-1}(B_d)$. By \cite[Proposition 2.8]{cianci2019combinatorial}, we obtain that $E_d$ is a dbp--retract of $p^{-1}(B_d)$.
\end{proof}

\begin{coro}\label{coro:g=ph}
Let $E$, $B$ and $X$ be finite T$_0$--spaces and let $p\colon E\to B$ be a fibration. Let $f\colon X\to E$ be a continuous map and let $g\colon X\to B$ be a continuous map such that $g\leq pf$. Then there exists a continuous map $h\colon X\to E$ such that $h\leq f$ and $g=ph$.  
\end{coro}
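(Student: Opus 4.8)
The plan is to deduce the statement from the openness of fibrations between finite T$_0$--spaces, after passing to mapping spaces. Since $X$ is a finite space and $p$ is a fibration, the induced map $p^{X}\colon E^{X}\to B^{X}$ is again a fibration. Moreover, $E^{X}$ and $B^{X}$ are finite T$_0$--spaces (exponentiability of finite spaces, \cite[Lemma 1]{stong1966finite}), and the partial order associated to such a mapping space is the pointwise order of continuous maps. In particular, $f$ is a point of $E^{X}$, the maps $g$ and $pf=p^{X}(f)$ are points of $B^{X}$, and the hypothesis $g\leq pf$ says precisely that $g\in U_{pf}$, the minimal open neighbourhood of $pf$ in $B^{X}$.

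Next I would apply \ref{coro:p_abta} to the fibration $p^{X}$ to conclude that $p^{X}$ is an open map, and then the equivalence $(1)\Leftrightarrow(2)$ of \ref{prop:abta_cerr} (applied to $p^{X}$, whose domain and codomain are finite, hence Alexandroff, T$_0$--spaces) to obtain
\[
p^{X}(U_{f})=U_{p^{X}(f)}=U_{pf}.
\]

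Finally, since $g\in U_{pf}=p^{X}(U_{f})$, there exists $h\in U_{f}$ with $p^{X}(h)=g$. Unwinding the definitions: $h\colon X\to E$ is a continuous map; the condition $h\in U_{f}$ means $h(x)\leq f(x)$ for every $x\in X$, i.e.\ $h\leq f$; and $p^{X}(h)=g$ means $ph=g$. This is exactly the desired conclusion.

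The only mildly delicate point, and the closest thing to an obstacle, is the identification of the order of the finite T$_0$--space $E^{X}$ (and of $B^{X}$) with the pointwise order on continuous maps; this is a standard consequence of Stong's exponentiability result and is already implicit in the way comparisons such as $g\leq pf$ are used throughout the paper, so it requires essentially no extra work. Everything else is a direct chain of the quoted results.
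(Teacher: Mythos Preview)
Your proof is correct and follows essentially the same approach as the paper: pass to mapping spaces to make $p^{X}\colon E^{X}\to B^{X}$ a fibration between finite T$_0$--spaces, and then use the key lifting property of such fibrations at the point $f$. The only cosmetic difference is that the paper cites \ref{theo:Ue_cap_fibra_b} directly to get $h\in U_f\cap (p^{X})^{-1}(g)$, whereas you route through its immediate consequence \ref{coro:p_abta} together with the equivalence of \ref{prop:abta_cerr}; these are the same statement in two equivalent forms.
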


\begin{proof}
Since $X$ is a finite space, $p^{X}\colon E^{X}\to B^{X}$ is a fibration between finite T$_0$--spaces. Since $g\leq pf=p^{X}(f)$ by \ref{theo:Ue_cap_fibra_b} we obtain that there exists $h\in U_f\cap (p^{X})^{-1}(g)$. Then $h\leq f$ and $ph=g$. 
\end{proof}

In \ref{coro:E_mnl_entonces_B_mnl} we proved that if $p\colon E\to B$ is a fibration between finite T$_0$--spaces and $B$ is connected and $E$ does not have beat points, then $B$ does not have beat points either. The following result shows that is also true if we consider only down beat points. We will see in \ref{subs:fib_no_cerrada}, that this result does not hold if we only consider up beat points.

\begin{coro}\label{coro:E_sin_dbp_entonces_B_sin_dbp}
Let $E$ and $B$ be finite T$_0$--spaces and let $p\colon E\to B$ be a fibration. Suppose that $E$ does not have down beat points. Then $p$ is a minimal element of $B^{E}$.

If, in addition, $B$ is connected, then $B$ does not have down beat points.
\end{coro}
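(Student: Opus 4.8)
The plan is to exploit the exponential functoriality of fibrations together with Corollary~\ref{coro:p_abta} and the characterization of the smallest dbp--retract as a minimum of $U_{\id_X}$ (Lemma~\ref{lemm:dbpr_minimo_U_id_X}), exactly as in the proof of Theorem~\ref{theo:E_d_dbpr_de_p-1(B_d)}. First, since $E$ is a finite space and $p$ is a fibration, the map $p^{E}\colon E^{E}\to B^{E}$ is again a Hurewicz fibration between finite T$_0$--spaces, hence an open map by Corollary~\ref{coro:p_abta}. By Proposition~\ref{prop:abta_cerr}, an open map between Alexandroff spaces sends minimal elements (of the poset) to minimal elements. Now the hypothesis that $E$ has no down beat points means, by Proposition~\ref{prop_stong}(1), that $\id_E$ is itself the minimum of $U_{\id_E}$ (equivalently, the smallest dbp--retract of $E$ is $E$), so $\id_E$ is a minimal element of $E^{E}$. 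Therefore $p^{E}(\id_E)=p$ is a minimal element of $B^{E}$, which proves the first assertion.

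For the second assertion, assume in addition that $B$ is connected. I would argue as in Corollary~\ref{coro:E_mnl_entonces_B_mnl}: let $h\colon B\to B$ be any continuous map with $h\le\id_B$. Then $hp\le p$ in $B^{E}$, and since $p$ is a minimal element of $B^{E}$ we get $hp=p$. Because $B$ is connected and $p$ is a (non-empty) Hurewicz fibration over a path-connected space, $p$ is surjective (this was already used in Corollary~\ref{coro:E_mnl_entonces_B_mnl}), so $hp=p$ forces $h=\id_B$. By Theorem~\ref{theo:dbpr_equivalences} (the equivalence $(1)\Leftrightarrow(2)$ applied to a down beat point removal), if $B$ had a down beat point then there would be a continuous $f\colon B\to B$ with $f\le\id_B$, $f^2=f$ and $f(B)\subsetneq B$; the previous paragraph gives $f=\id_B$, a contradiction. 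Hence $B$ has no down beat points.

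Alternatively, and perhaps more cleanly, the second part is an immediate consequence of Theorem~\ref{theo:E_d_dbpr_de_p-1(B_d)}: if $E$ has no down beat points then $E_d=E$, and $E_d$ is a dbp--retract of $p^{-1}(B_d)\subseteq E$; since $E\subseteq p^{-1}(B_d)\subseteq E$ we get $p^{-1}(B_d)=E$, and surjectivity of $p$ then yields $B_d=B$, i.e.\ $B$ has no down beat points. I expect the only subtle point to be the assertion that open maps between finite posets send poset-minimal elements to poset-minimal elements, together with the identification of "$E$ has no down beat points" with "$\id_E$ is minimal in $E^{E}$"; both are handled by Proposition~\ref{prop:abta_cerr} and Proposition~\ref{prop_stong}(1) respectively, so no genuine obstacle remains — the statement follows by assembling results already proved.
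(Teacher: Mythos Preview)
Your proposal is correct and follows essentially the same route as the paper's proof: both parts use exactly the argument from Theorem~\ref{theo:E_d_dbpr_de_p-1(B_d)} (openness of $p^{E}$ via Corollary~\ref{coro:p_abta}, minimality of $\id_E$ via Proposition~\ref{prop_stong}), and for the second part the paper likewise takes $f\le\id_B$ associated to $B_d$, deduces $fp=p$ from minimality of $p$, and concludes $f=\id_B$ by surjectivity. Your alternative via Theorem~\ref{theo:E_d_dbpr_de_p-1(B_d)} is also valid but is not the route the paper chooses here.
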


\begin{proof} 
Since $E$ does not have down beat points, it follows from \ref{prop_stong} that $\id_E$ is a minimal element of $E^{E}$. Hence, as in the proof of \ref{theo:E_d_dbpr_de_p-1(B_d)}, we have that $p=p\id_E=p^{E}(\id_E)$ is a minimal element of $B^{E}$. 

Now suppose that $B$ is connected. Let $B_d$ be the minimal dbp--retract of $B$ and let $f\colon B\to B$ be the map associated to $B_d$ given by \ref{theo:dbpr_equivalences}. Since $f\leq \id_B$, we have as in \ref{theo:E_d_dbpr_de_p-1(B_d)} that $fp\leq p$ and hence $fp=p$. Since $B$ is connected, it follows that $p$ is surjective and hence $f=\id_B$. Then $B_d=B$ and thus $B$ does not have down beat points. 
\end{proof}

Combining \ref{prop:dbp_p_o_dbp_fibra} and \ref{coro:E_sin_dbp_entonces_B_sin_dbp} we immediately obtain the following result.

\begin{coro}
Let $E$ and $B$ be finite T$_0$--spaces such that $B$ is connected and let $p\colon E\to B$ be a fibration without down beat points. Then $E$ has down beat points if and only if $B$ has down beat points.
\end{coro}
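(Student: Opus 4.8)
The plan is to deduce this corollary from the two results cited, namely Proposition~\ref{prop:dbp_p_o_dbp_fibra} and Corollary~\ref{coro:E_sin_dbp_entonces_B_sin_dbp}, by a short case analysis. First I would record the two implications we have at our disposal. From Corollary~\ref{coro:p_abta} we know that every fibration between finite T$_0$--spaces is an open map, so Proposition~\ref{prop:dbp_p_o_dbp_fibra} applies to $p$ and tells us: if $e_0$ is a down beat point of $E$, then either $p(e_0)$ is a down beat point of $B$ or $e_0$ is a down beat point of $p$. Since $p$ has no down beat points by hypothesis, the second alternative is impossible, and hence the existence of a down beat point $e_0$ of $E$ forces $p(e_0)$ to be a down beat point of $B$. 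This gives the implication ``$E$ has down beat points $\Rightarrow$ $B$ has down beat points''.

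For the converse implication I would argue by contrapositive. Suppose $E$ has no down beat points. Then, since $B$ is connected and $p$ is a fibration, Corollary~\ref{coro:E_sin_dbp_entonces_B_sin_dbp} immediately yields that $B$ has no down beat points. Combining the two implications gives the stated equivalence. One should also note that the hypothesis ``$p$ without down beat points'' is not even needed for the converse direction (it is only used in the forward direction to rule out the alternative in Proposition~\ref{prop:dbp_p_o_dbp_fibra}), but including it does no harm and matches the statement as given.

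I do not expect any real obstacle here: both halves are direct invocations of previously established results, and the only thing to be careful about is checking that the hypotheses of the cited results are met --- connectedness of $B$ for Corollary~\ref{coro:E_sin_dbp_entonces_B_sin_dbp}, and openness of $p$ (via Corollary~\ref{coro:p_abta}) for Proposition~\ref{prop:dbp_p_o_dbp_fibra}. Since the excerpt itself introduces the corollary with the phrase ``Combining \ref{prop:dbp_p_o_dbp_fibra} and \ref{coro:E_sin_dbp_entonces_B_sin_dbp} we immediately obtain the following result,'' the intended proof is exactly this two-line combination, and writing it out carefully as above should suffice.
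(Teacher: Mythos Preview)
Your proposal is correct and matches the paper's intended argument exactly: the paper states the corollary as an immediate consequence of Proposition~\ref{prop:dbp_p_o_dbp_fibra} and Corollary~\ref{coro:E_sin_dbp_entonces_B_sin_dbp}, and you have spelled out precisely this combination (including the use of Corollary~\ref{coro:p_abta} to ensure $p$ is open). There is nothing to add.
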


The following result is a generalization of \ref{theo:Ue_cap_fibra_b}.

\begin{theo}
Let $E$ and $B$ be finite T$_0$--spaces and let $p\colon E\to B$ be a fibration. Let $e\in E$ and let $b\in U_{p(e)}$. Then $U_e\cap p^{-1}(b)$ is contractible.
\end{theo}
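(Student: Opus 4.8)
The goal is to strengthen Theorem~\ref{theo:Ue_cap_fibra_b}, which only asserts non-emptiness of $U_e\cap p^{-1}(b)$, to contractibility. The natural strategy is to realize $U_e\cap p^{-1}(b)$ as (fiber homotopy equivalent to) a fiber of a suitable auxiliary fibration, and then invoke the fact that fibers of Hurewicz fibrations over a contractible base are homotopy equivalent to the total space — or more directly, that since $U_e$ and $U_{p(e)}$ are contractible (they have maxima $e$ and $p(e)$), restricting $p$ appropriately yields a fibration with contractible base whose fibers are then forced to be contractible.

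\medskip

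First I would restrict attention to the open subspace $U_e\subseteq E$. Since $p$ is a fibration and $U_{p(e)}$ is open in $B$, the restriction $p|\colon p^{-1}(U_{p(e)})\to U_{p(e)}$ is a fibration (restriction over an open — indeed any — subspace is a pullback, hence a fibration). I claim that $U_e\subseteq p^{-1}(U_{p(e)})$: indeed by continuity $p(U_e)\subseteq U_{p(e)}$. However $p$ need not map $U_e$ onto $U_{p(e)}$ in general, although by Corollary~\ref{coro:p_abta} the map $p$ is open, and by Proposition~\ref{prop:abta_cerr} we do get $p(U_e)=U_{p(e)}$. So consider $q:=p|_{U_e}\colon U_e\to U_{p(e)}$; this is a surjective open map between finite $T_0$--spaces with contractible codomain (as $U_{p(e)}$ has maximum $p(e)$) and contractible domain (as $U_e$ has maximum $e$). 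The key step is to show $q$ is a fibration. For this I would verify that $q$ inherits the homotopy lifting property from $p$: given a homotopy into $U_{p(e)}$ with an initial lift into $U_e$, lift it via $p$ to a homotopy into $E$, and then observe that this lifted homotopy automatically lands in $U_e$. The last point is the crux: one uses that $U_e$ is the minimal open set around $e$ together with the structure of paths in Alexandroff spaces (cf.\ the maps $\eta(x\le y)$ of Definition~\ref{defi:eta} and the local behaviour of lifted homotopies exploited already in the proof of Theorem~\ref{theo:Ue_cap_fibra_b}).

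\medskip

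Granting that $q\colon U_e\to U_{p(e)}$ is a fibration with contractible base $U_{p(e)}$, I would then argue that every fiber of $q$ is homotopy equivalent to the total space $U_e$, which is contractible; hence $q^{-1}(b)=U_e\cap p^{-1}(b)$ is contractible. The homotopy equivalence of fibers with the total space over a contractible base is classical for Hurewicz fibrations: a contraction of $U_{p(e)}$ to $p(e)$ lifts to a fiber homotopy equivalence between $q$ and the product fibration $U_e\times\{p(e)\}\to U_{p(e)}$, wait — more carefully, a contraction $H\colon U_{p(e)}\times I\to U_{p(e)}$ from $\mathrm{id}$ to the constant map at $p(e)$ lifts through $q$ and yields a deformation of $U_e$ onto the fiber $q^{-1}(p(e))$ over the basepoint, and simultaneously shows $q^{-1}(b)\simeq U_e$ for every $b\in U_{p(e)}$. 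Since $U_e$ is contractible, so is $q^{-1}(b)$.

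\medskip

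The main obstacle I anticipate is proving that $q=p|_{U_e}\colon U_e\to U_{p(e)}$ is genuinely a fibration — i.e.\ that the restriction of a fibration to the minimal open sets $U_e\to U_{p(e)}$ stays a fibration. One cannot simply cite "restriction over a subspace" because $U_e$ is not of the form $p^{-1}(\text{subspace})$. The argument must instead show that any lifted homotopy $\widetilde H\colon X\times I\to E$ with $\widetilde H(x,0)\in U_e$ for all $x$ necessarily satisfies $\widetilde H(x,t)\in U_e$ throughout, or else must be corrected to one that does. Here the specific geometry of finite spaces is essential: because $U_{p(e)}$ has a maximum $p(e)$, any path in $U_{p(e)}$ starting at some point stays below $p(e)$, and using the comparability/pasting lemmas (Propositions~\ref{prop:f_leq_g_g_cont} and~\ref{prop:f_leq_g_f_cont}) one can replace $\widetilde H$ by a lift that factors through $U_e$, exactly as in the trick used in the proof of Theorem~\ref{theo:fib_al_sacar_dbp}. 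I expect the bulk of the work to be this verification; the remaining steps (contractibility of $U_e$ and $U_{p(e)}$, and fibers over a contractible base) are standard.
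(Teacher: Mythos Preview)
Your strategy is genuinely different from the paper's, and the crucial step you flag --- that $q=p|_{U_e}\colon U_e\to U_{p(e)}$ is itself a Hurewicz fibration --- is a real gap that your sketch does not close. The analogy with Theorem~\ref{theo:fib_al_sacar_dbp} is misleading: there the ``correction'' of a lift uses an existing retraction $r\colon E\to E\setminus\{e\}$, and the pasting lemmas of Propositions~\ref{prop:f_leq_g_g_cont}--\ref{prop:f_leq_g_f_cont} only certify continuity of a map one has already written down. In the present situation there is in general no retraction of $E$ (or even of $p^{-1}(U_{p(e)})$) onto $U_e$ --- think of $E$ minimal --- so you have no map available with which to push an arbitrary lift $\widetilde H$ back into $U_e$, and the pasting lemmas alone cannot manufacture one. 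Your approach can be rescued, but only by first passing to the smallest dbp--retract $p'$ of $p$, invoking Theorem~\ref{theo:fib_sin_dbp_regular} to obtain a \emph{regular} path-lifting map $\Lambda'$ for $p'$, and then building a path-lifting map for $p$ from $\Lambda'$ and the retraction $r\colon E\to E'$ as in the proof of Theorem~\ref{theo:fib_al_sacar_dbp}; regularity then gives $\Lambda'(r(e'),\gamma)\le\Lambda'(r(e),C_{p(e)})=C_{r(e)}\le C_e$ whenever $e'\le e$ and $\gamma\le C_{p(e)}$, so this particular lift does stay in $U_e$. That works, but it leans on results the paper proves only later and is far more circuitous than needed.

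The paper's own argument bypasses the question of whether $p|_{U_e}$ is a fibration entirely. With $X=U_e\cap p^{-1}(b)$ and $\widetilde X=X\cup\{e\}$, the inclusion $i\colon\widetilde X\hookrightarrow E$ satisfies $C_b\le p\,i$, so Corollary~\ref{coro:g=ph} (which exploits that $p^{\widetilde X}$ is a fibration between finite $T_0$--spaces) produces a continuous $h\colon\widetilde X\to E$ with $h\le i$ and $ph=C_b$. Then $h(\widetilde X)\subseteq X$, and the restriction $h|_X\colon X\to X$ satisfies both $h|_X\le\id_X$ and $h|_X\le C_{h(e)}$, giving a two-step fence $\id_X\ge h|_X\le C_{h(e)}$ and hence contractibility of $X$. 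This is three lines once Corollary~\ref{coro:g=ph} is in hand; the function-space trick replaces your entire detour through the auxiliary fibration $q$.
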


\begin{proof}
Let $X=U_e\cap p^{-1}(b)$ and let $\tilde{X}=X\cup \{e\}$. Let $i\colon \tilde{X}\to E$ be the inclusion map. Then $C_b\leq pi$ and by \ref{coro:g=ph} it follows that there exists $h\colon \tilde{X}\to E$ such that $h\leq i$ and $ph=C_b$. It is easy to see then that $\im h \subseteq X$.

Let $h|\colon X\to X$ be the restriction of $h$. Since $h\leq i$, it follows that $h|\leq \id_X$. On the other hand, $h|\leq C_{h(e)}$. Therefore $\id_X\simeq C_{h(e)}$ and thus $X$ is contractible.
\end{proof}

The following result can be considered as a notably weaker dual version of \ref{theo:Ue_cap_fibra_b}.

\begin{theo}\label{theo:Fe_cap_fibra_b}
Let $E$ and $B$ be finite T$_0$--spaces and let $p\colon E\to B$ be a fibration. Let $e\in E$ and let $b\in F_{p(e)}$. Then there exists $e'\in U_e \cap p^{-1}(p(e))$ such that $F_{e'}\cap p^{-1}(b)\neq\varnothing$.
\end{theo}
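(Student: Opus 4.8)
The plan is to use the homotopy lifting property of $p$ applied to the Sierpinski space $\S=\{0,1\}$ with $0<1$, mimicking the argument in the proof of \ref{theo:Ue_cap_fibra_b} but lifting a path that moves \emph{upwards}. Concretely, since $b\in F_{p(e)}$, i.e.\ $b\geq p(e)$, we want a path in $B$ that starts at $p(e)$ and reaches $b$; the natural candidate is $\eta(p(e)\leq b)$ from \ref{defi:eta}, which takes the value $p(e)$ on $[0,1)$ and $b$ at $t=1$. We build a homotopy $H\colon \S\times I\to B$ by letting the fiber over $s=0$ be the constant path at $p(e)$ and the fiber over $s=1$ be $\eta(p(e)\leq b)$; that is, $H(s,t)=b$ if $s=1$ and $t=1$, and $H(s,t)=p(e)$ otherwise. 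One checks $H^{-1}(U_b)$: since $b\geq p(e)$ we have $U_{p(e)}\subseteq U_b$, so the preimage of $U_b$ under $H$ is all of $\S\times I$ unless $U_b$ is a proper neighbourhood — actually $H^{-1}(U_b)=\S\times I$ here since both values $p(e),b$ lie in $U_b$; more to the point, for any open $V\ni p(e)$ with $b\notin V$ we get $H^{-1}(V)=\S\times[0,1)$, which is open. So $H$ is continuous.

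Next I apply the homotopy lifting property. We have $H i_0 = C_{p(e)} = p\,C_e$ (constant maps), so there is a lift $\tilde H\colon \S\times I\to E$ with $\tilde H i_0 = C_e$ and $p\tilde H = H$. Now I examine the point $\tilde H(1,1)$. Its image under $p$ is $H(1,1)=b$, so $\tilde H(1,1)\in p^{-1}(b)$. I also want to relate $\tilde H(1,1)$ to something in $U_e\cap p^{-1}(p(e))$. Consider $e' := \tilde H(0,1)$ (or some point along the $s=0$ fiber near $t=1$): we have $p\tilde H(0,t)=p(e)$ for all $t$, so the whole path $t\mapsto\tilde H(0,t)$ lies in $p^{-1}(p(e))$, and it starts at $\tilde H(0,0)=e$. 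Since $p^{-1}(p(e))$ is a finite $T_0$--space, a path in it connecting $e$ to $e'$ forces $e$ and $e'$ to be comparable along a fence; in particular, using that $\tilde H(0,0)=e$ and continuity near $t=0$, we get $\tilde H(0,t)\in U_e$ for small $t$, hence $e'\in U_e\cap p^{-1}(p(e))$ is a legitimate candidate provided we take $e'$ to be $\tilde H(0,t_0)$ for a suitably small $t_0$ rather than at $t=1$.

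The final step is to connect $e'$ to a point of $F_{e'}\cap p^{-1}(b)$. The key observation is that $\tilde H$ restricted to a small square $[t_0,1]\times$(endpoint identifications) should witness $\tilde H(0,t_0)\leq \tilde H(1,1)$ in $E$, or at least that $\tilde H(1,1)\in F_{e'}$. Here I would use that the Sierpinski direction gives $\tilde H(0,t)\leq \tilde H(1,t)$ whenever... hmm — actually the clean way: choose $\varepsilon>0$ with $\S\times[0,\varepsilon]\subseteq \tilde H^{-1}(U_e)$ (possible since $\tilde H(0,0)=\tilde H(1,0)=e\in U_e$, wait $\tilde H(1,0)=e$ as well since $i_0$ hits both). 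Then $e' := \tilde H(0,\varepsilon)\in U_e\cap p^{-1}(p(e))$ since $p\tilde H(0,\varepsilon)=p(e)$. Now run the argument of \ref{theo:Ue_cap_fibra_b} \emph{in the fiber direction}: I expect to need a second application of homotopy lifting, this time starting the lift at $e'$ and using a path $\eta(p(e)\leq b)$, to produce a point above $b$ that is $\geq e'$. The main obstacle, and the reason this is ``notably weaker'', is precisely that upward path-lifting in finite spaces is badly behaved — the lift of $\eta(p(e)\leq b)$ need not end at a point comparable to its starting point, so one cannot simply conclude $F_e\cap p^{-1}(b)\neq\varnothing$; one only recovers the existence of \emph{some} $e'$ below $e$ (in the same fiber) from which the fiberwise lift does work. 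I would organize the proof so that the first lifting produces the candidate $e'$, and a careful inspection of the \emph{same} lift $\tilde H$ near $(1,1)$ — using an $\varepsilon$-neighbourhood argument as in \ref{theo:Ue_cap_fibra_b} — produces the point of $F_{e'}\cap p^{-1}(b)$, so that a single homotopy lifting suffices.
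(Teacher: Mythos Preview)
Your approach has a genuine gap that the Sierpinski space cannot bridge. With your homotopy $H\colon\S\times I\to B$, the \emph{only} point mapped to $b$ is $(1,1)$, so the sole candidate in $p^{-1}(b)$ produced by the lift is $\tilde H(1,1)$. The order relation you can extract from the Sierpinski direction is $\tilde H(0,t)\leq\tilde H(1,t)$ for each fixed $t$; at $t=1$ this gives $\tilde H(0,1)\leq\tilde H(1,1)$. But $\tilde H(0,1)$ need not lie in $U_e$: the path $t\mapsto\tilde H(0,t)$ stays in the fiber $p^{-1}(p(e))$ and starts at $e$, yet in a finite $T_0$--space such a path may wander up and down arbitrarily, so by time $t=1$ it can have left $U_e$. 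Conversely, for the small $t_0\leq\varepsilon$ where you do know $e':=\tilde H(0,t_0)\in U_e\cap p^{-1}(p(e))$, the companion point $\tilde H(1,t_0)$ lies only in $p^{-1}(p(e))$, not in $p^{-1}(b)$. There is no mechanism to make your $\varepsilon$ (which depends on the \emph{lift}) interact with the fixed time $t=1$ at which $H$ reaches $b$. A second lifting from $e'$ runs into the same obstruction: the lift of $\eta(p(e)\leq b)$ starting at $e'$ need not end at a point $\geq e'$.

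The paper resolves this by replacing $\S$ with the infinite locally finite $T_0$--space $Z=\{\alpha_n:n\in\N\}\cup\{\beta\}$, where $\beta<\alpha_n$ for every $n$, and taking $H\colon Z\times I\to B$ equal to $p(e)$ on the open set $U=\bigcup_n U_{\alpha_n}\times[0,1/n)$ and to $b$ elsewhere. The point is that $H(\alpha_n,1/n)=b$ for \emph{every} $n$, so the jump to $b$ occurs at times $1/n$ accumulating at $0$. After lifting, one finds $\varepsilon>0$ with $\{\beta\}\times[0,\varepsilon)\subseteq\tilde H^{-1}(U_e)$, then chooses $n$ with $1/n<\varepsilon$; setting $e'=\tilde H(\beta,1/n)\in U_e\cap p^{-1}(p(e))$, the closure relation $(\alpha_n,1/n)\in\overline{\{(\beta,1/n)\}}$ forces $\tilde H(\alpha_n,1/n)\in F_{e'}\cap p^{-1}(b)$. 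The infinite family of $\alpha_n$'s is exactly what lets the argument adapt to whatever $\varepsilon$ the lift hands you, and is the missing ingredient in your proposal.
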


\begin{proof}
Let $Z$ be the topological space whose underlying set is $\{\alpha_n:n\in\N\}\cup \{\beta\}$ and whose topology is generated by the subbase $\left\{\{\alpha_n,\beta\}:n\in \N\right\}$. Observe that $Z$ is a locally finite T$_0$--space and that $U_\beta=\{\beta\}$ and $U_{\alpha_n}=\{\alpha_n,\beta\}$ for all $n\in\N$.

Let $U=\bigcup\limits_{n=1}^{\infty}U_{\alpha_n}\times [0,1/n)$. It is clear that $U$ is an open subset of $Z\times I$. We define the map $H\colon Z\times I\to B$ by 
\[
H(z,t)=\left\{
\begin{array}{ll}
p(e)&\text{if $(z,t)\in U$,}\\
b&\text{if $(z,t)\not\in U$.}
\end{array}
\right.
\]
It is easy to verify that $H$ is continuous. Note that $pC_e=Hi_0$, where $C_e\colon Z\to E$ is the constant map with value $e$. Hence, there exists a continuous map $\tilde{H}\colon Z\times I \to E$ such that $p\tilde{H}=H$ and $\tilde{H}i_0=C_e$. Now, $\tilde{H}(\beta,0)=\tilde{H}i_0(\beta)=C_e(\beta)=e$ and hence $\tilde{H}^{-1}(U_e)$ is an open subset of $Z\times I$ that contains $(\beta,0)$. Then there exists $\varepsilon>0$ such that $\{\beta\}\times [0,\varepsilon)\subseteq \tilde{H}^{-1}(U_e)$. 

Let $n\in \N$ be such that $1/n<\min\{\varepsilon,1\}$. Then $\tilde{H}(\beta,1/n)\leq e$ and $p\tilde{H}(\beta,1/n)=H(\beta,1/n)=p(e)$. Taking $e'=\tilde{H}(\beta,1/n)$ it follows that $e'\in U_e\cap p^{-1}(p(e))$. On the other hand, since $\alpha_n\in F_\beta=\overline{\{\beta\}}$, then $(\alpha_n,1/n)\in \overline{\left\{(\beta,1/n)\right\}}$ and hence
\[\tilde{H}(\alpha_n,1/n)\in \tilde{H}\left(\overline{\left\{(\beta,1/n)\right\}}\right)\subseteq \overline{\left\{\tilde{H}(\beta,1/n)\right\}}=F_{e'}.\]
But $p\tilde{H}(\alpha_n,1/n)=H(\alpha_n,1/n)=b$ and then $\tilde{H}(\alpha_n,1/n)\in p^{-1}(b)$. Hence $\tilde{H}(\alpha_n,1/n)\in F_{e'}\cap p^{-1}(b)$ and the result follows.
\end{proof}

The previous result shows that if $p\colon E\to B$ is a fibration between finite T$_0$--spaces, then for any $e\in E$ and for any $b\geq p(e)$, there exists a point $e'$ such that $e'\leq e$, $e'$ belongs to the same fiber than $e$ and $e'$ is smaller than some point of the fiber of $b$.
In subsection \ref{subs:fib_no_cerrada} we will give an example that shows that, in general, it is not true that $F_e\cap p^{-1}(b)\neq\varnothing$ for any $b\geq p(e)$.

\begin{coro}
Let $E$, $B$ and $X$ be finite T$_0$--spaces and let $p\colon E\to B$ be a fibration. Let $f\colon X\to E$ and $g\colon X\to B$ be continuous maps such that $g\geq pf$. Then there exist continuous maps $h,i\colon X\to E$ such that $i\geq h\leq f$, $pf=ph$ and $g=pi$.  
\end{coro}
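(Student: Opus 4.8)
The plan is to imitate the proof of Corollary~\ref{coro:g=ph}, simply replacing the use of Theorem~\ref{theo:Ue_cap_fibra_b} by its ``weak dual'' Theorem~\ref{theo:Fe_cap_fibra_b}, and carrying everything out in the function space fibration $p^{X}\colon E^{X}\to B^{X}$ instead of in $p$ itself. Since this is a statement about maps out of the fixed finite space $X$, the exponential adjunction should turn it verbatim into the statement of Theorem~\ref{theo:Fe_cap_fibra_b} applied to $p^{X}$.

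First I would recall that, as $X$ is a finite space, $p^{X}\colon E^{X}\to B^{X}$ is a Hurewicz fibration between finite T$_0$--spaces, and that in these function spaces the order is pointwise, so that for $\alpha,\beta$ in $E^{X}$ (or in $B^{X}$) one has $\alpha\leq\beta$ if and only if $\alpha(x)\leq\beta(x)$ for all $x\in X$; moreover $p^{X}(f)=pf$ directly from the definition of $p^{X}$. Under this identification $f$ becomes a point of $E^{X}$, $g$ becomes a point of $B^{X}$, and the hypothesis $g\geq pf$ reads $g\in F^{B^{X}}_{p^{X}(f)}$. Next I would apply Theorem~\ref{theo:Fe_cap_fibra_b} to the fibration $p^{X}$ with $e:=f$ and $b:=g$: this produces a point $h\in U^{E^{X}}_{f}\cap (p^{X})^{-1}(p^{X}(f))$ such that $F^{E^{X}}_{h}\cap (p^{X})^{-1}(g)\neq\varnothing$. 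Translating back, $h\colon X\to E$ is continuous with $h\leq f$ and $ph=p^{X}(h)=p^{X}(f)=pf$; and choosing any $i$ in the nonempty set $F^{E^{X}}_{h}\cap (p^{X})^{-1}(g)$ gives a continuous map $i\colon X\to E$ with $i\geq h$ and $pi=p^{X}(i)=g$. Hence $i\geq h\leq f$, $pf=ph$ and $g=pi$, which is exactly what is claimed.

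I do not expect any real obstacle here: the whole content is the exponential adjunction together with Theorem~\ref{theo:Fe_cap_fibra_b}, mirroring the way Corollary~\ref{coro:g=ph} is deduced from Theorem~\ref{theo:Ue_cap_fibra_b}. The only points that warrant an explicit line are the identification of the posets $E^{X}$ and $B^{X}$ with the pointwise order (which rests on Stong's exponentiability of finite spaces) and the compatibility $p^{X}(f)=pf$; both are routine and are already used implicitly in the proof of Corollary~\ref{coro:g=ph}.
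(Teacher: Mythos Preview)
Your proposal is correct and is precisely the argument the paper has in mind: the paper simply states that the proof is analogous to that of Corollary~\ref{coro:g=ph}, meaning one applies Theorem~\ref{theo:Fe_cap_fibra_b} to the fibration $p^{X}\colon E^{X}\to B^{X}$ with $e=f$ and $b=g$, exactly as you describe.
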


\begin{proof}
The proof of this result is analogous to the proof of \ref{coro:g=ph}.
\end{proof}

\begin{prop}\label{prop:fin_sin_dbp_cerrada}
Let $E$ and $B$ be finite T$_0$--spaces and let $p\colon E\to B$ be a fibration without down beat points. Then $p$ is a closed map.
\end{prop}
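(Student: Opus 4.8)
The plan is to invoke the characterization of closed maps in \ref{prop:abta_cerr}: it suffices to show that for every $e\in E$ and every $b\in B$ with $b\geq p(e)$ one has $F_e\cap p^{-1}(b)\neq\varnothing$. So I would fix such an $e$ and $b$ and look for a point of $E$ lying above $e$ and over $b$. The trick is to pass to the open subspace $U_e\subseteq E$: on $U_e$ the constant map with value $b$, which I denote $C_b\colon U_e\to B$, dominates $p|_{U_e}$, since $p(x)\leq p(e)\leq b$ for all $x\in U_e$. Regarding the inclusion $\iota\colon U_e\hookrightarrow E$ as a map $U_e\to E$ with $C_b\geq p\iota$, the corollary following \ref{theo:Fe_cap_fibra_b} (applied to the fibration $p$, with $X=U_e$, $f=\iota$, $g=C_b$) produces continuous maps $h,i\colon U_e\to E$ such that $i\geq h\leq \iota$, $ph=p\iota$ and $pi=C_b$.

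Next I would check that $p|_{U_e}\colon U_e\to B$ has no down beat points. Since $U_e$ is open in $E$, for every $x_0\in U_e$ one has $x<x_0\Rightarrow x<x_0\leq e\Rightarrow x\in U_e$, and hence $\widehat U^{U_e}_{x_0}=\widehat U^{E}_{x_0}$ and $\widehat U^{U_e\cap p^{-1}(p(x_0))}_{x_0}=\widehat U^{p^{-1}(p(x_0))}_{x_0}$; therefore a down beat point of $p|_{U_e}$ would be a down beat point of both $E$ and $p^{-1}(p(x_0))$, i.e.\ a down beat point of $p$, contradicting the hypothesis. Now $h\leq\iota$ forces $h(x)\leq x\leq e$ for all $x\in U_e$, so $h$ takes values in $U_e$ and may be viewed as a map $U_e\to U_e$ with $h\leq\id_{U_e}$; moreover $ph=p\iota$ says precisely that, viewed this way, $h$ is an arrow over $B$ from $p|_{U_e}$ to itself. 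By \ref{prop:sin_dbp_comparable_con_id_implica_id}, $h=\id_{U_e}$, i.e.\ $h=\iota$. Consequently $i\geq h=\iota$ gives $i(e)\geq e$, while $pi=C_b$ gives $p(i(e))=b$, so $i(e)\in F_e\cap p^{-1}(b)$. Applying \ref{prop:abta_cerr} then yields that $p$ is closed.

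I do not anticipate a genuine obstacle: the argument is a direct combination of the weak dual lifting statement \ref{theo:Fe_cap_fibra_b} (in the functional form of its corollary) with the rigidity statement \ref{prop:sin_dbp_comparable_con_id_implica_id}. The one point that requires care is the verification that absence of down beat points descends from $p$ to the restriction $p|_{U_e}$; this reduces to the elementary observation that, because $U_e$ is open, the sets $\widehat U_{x_0}$ — both in the whole space and inside the fiber of $p(x_0)$ — are unchanged upon passing to $U_e$, so that the notion of down beat point of $p|_{U_e}$ at a point of $U_e$ coincides with that of $p$. (It is worth noting that this proof genuinely uses the down beat point hypothesis, in line with the asymmetry already seen after \ref{theo:fib_al_sacar_dbp}, since the constant map $C_b$ dominates $p$ only on down-sets.)
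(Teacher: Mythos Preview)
Your proof is correct and follows essentially the same strategy as the paper's: reduce to showing $F_e\cap p^{-1}(b)\neq\varnothing$ via \ref{prop:abta_cerr}, apply the corollary of \ref{theo:Fe_cap_fibra_b} to a suitable domain on which $C_b$ dominates $p$, and then use \ref{prop:sin_dbp_comparable_con_id_implica_id} on an open restriction (which inherits the absence of down beat points) to force $h$ to be the identity, whence $i(e)\in F_e\cap p^{-1}(b)$.

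The only difference is cosmetic. The paper restricts to the open set $p^{-1}(U_b)$ and applies the corollary to the pullback fibration $p|\colon p^{-1}(U_b)\to U_b$ with $f=\id$; you instead keep the original fibration $p$, take $X=U_e$ with $f=\iota$, and then observe a posteriori that $h$ lands in $U_e$. Your variant saves the (easy) check that the pullback is again a fibration, at the cost of the (equally easy) check that $h(U_e)\subseteq U_e$; in either case the rigidity step is applied to the restriction of $p$ to an open subspace of $E$, and your verification that down beat points of such a restriction would be down beat points of $p$ is the same as the paper's.
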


\begin{proof}
By \ref{prop:abta_cerr}, it suffices to prove that for all $e\in E$ and all $b\geq p(e)$, there exists $e'\in F_e\cap p^{-1}(b)$. Let $e\in E$, let $b\geq p(e)$ and let $p|\colon p^{-1}(U_b)\to U_b$ be the restriction of $p$. Then $p|$ is the pullback of $p$ along the inclusion map $U_b\hookrightarrow B$ and hence it is a fibration. Since $p^{-1}(U_b)$ is an open subset of $E$, any down beat point of $p^{-1}(U_b)$ is a down beat point of $E$. Thus the down beat points of $p|$ are down beat points of $p$. In particular, $p|$ does not have down beat points.

Now, $p|\circ\id_{p^{-1}(U_b)}=p|\leq C_b$ and hence there exist continuous maps $h,i\colon p^{-1}(U_b)\to p^{-1}(U_b)$ such that $i\geq h\leq \id_{p^{-1}(U_b)}$, $p|i=C_b$ and $p|h=p|$. In particular, $h$ is an arrow over $B$ from $p|$ to $p|$ and $h\leq \id_{p^{-1}(U_b)}$. Since $p|$ does not have down beat points, it follows from \ref{prop:sin_dbp_comparable_con_id_implica_id} that $p|=\id_{p^{-1}(U_b)}$ and hence $i\geq \id_{p^{-1}(U_b)}$. Since $i(e)\geq e$ and $p(i(e))=b$, we obtain that $i(e)\in F_e\cap p^{-1}(b)$.
\end{proof}

\begin{prop}\label{prop:p_fib_cerrada_ubp_p_o_ubp_fibra}
Let $E$ and $B$ be finite T$_0$--spaces and let $p\colon E\to B$ be a Hurewicz fibration.
\begin{enumerate}
\item If $e_0$ is a down beat point of $E$, then $p(e_0)$ is a down beat point of $B$ or $e_0$ is a down beat point of $p$.
\item If $p$ does not have down beat points and $e_0$ is an up beat point of $E$, then $p(e_0)$ is an up beat point of $B$ or $e_0$ is an up beat point of $p$.
\end{enumerate}
\end{prop}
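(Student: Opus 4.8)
The plan is to derive both statements by combining Proposition~\ref{prop:dbp_p_o_dbp_fibra} with the topological properties of Hurewicz fibrations between finite T$_0$--spaces established earlier. Proposition~\ref{prop:dbp_p_o_dbp_fibra} already records precisely the two dichotomies we want---one for open maps and down beat points, one for closed maps and up beat points---so in each case the only thing to verify is that $p$ enjoys the requisite property.

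For part (1), I would first invoke Corollary~\ref{coro:p_abta}, by which every Hurewicz fibration between finite T$_0$--spaces is an open map. With $p$ open and $e_0$ a down beat point of $E$, the first assertion of Proposition~\ref{prop:dbp_p_o_dbp_fibra} applies directly and yields that $p(e_0)$ is a down beat point of $B$ or $e_0$ is a down beat point of $p$.

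For part (2), I would instead use Proposition~\ref{prop:fin_sin_dbp_cerrada}, which states that a fibration without down beat points is a closed map. Under the standing hypothesis of (2) that $p$ has no down beat points, $p$ is therefore closed, so the second assertion of Proposition~\ref{prop:dbp_p_o_dbp_fibra}, applied with $e_0$ the given up beat point of $E$, produces the desired alternative.

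Since each part is merely a concatenation of previously proved results, I do not expect a substantive obstacle. The only point that deserves care is that the ``no down beat points'' hypothesis in (2) is genuinely used---through Proposition~\ref{prop:fin_sin_dbp_cerrada}---to ensure $p$ is closed; without it a fibration need not be closed (see the remark after Theorem~\ref{theo:Fe_cap_fibra_b} and the examples in subsection~\ref{subs:fib_no_cerrada}), so one should not expect an unconditional up--beat--point analogue of (1).
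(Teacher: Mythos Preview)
Your proof is correct and follows exactly the same approach as the paper: item (1) from Corollary~\ref{coro:p_abta} together with Proposition~\ref{prop:dbp_p_o_dbp_fibra}, and item (2) from Proposition~\ref{prop:fin_sin_dbp_cerrada} together with Proposition~\ref{prop:dbp_p_o_dbp_fibra}. Your remark on the necessity of the ``no down beat points'' hypothesis in (2) is also consistent with the discussion in subsection~\ref{subs:fib_no_cerrada}.
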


\begin{proof}
The first item follows from \ref{prop:dbp_p_o_dbp_fibra} and \ref{coro:p_abta}. The second item follows from \ref{prop:dbp_p_o_dbp_fibra} and \ref{prop:fin_sin_dbp_cerrada}.
\end{proof}

\begin{theo}
Let $E$ and $B$ be finite T$_0$--spaces such that $B$ is connected and let $p\colon E\to B$ be a fibration which is a minimal map. Then $E$ is minimal if and only if $B$ is minimal.
\end{theo}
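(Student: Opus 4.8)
The plan is to prove the two implications separately, and to notice that almost all the real work has already been done in the preceding results, so that what remains is a short case analysis.

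For the forward direction, suppose $E$ is minimal. Since $B$ is assumed connected and $p$ is a fibration, the conclusion that $B$ is minimal is exactly the content of Corollary~\ref{coro:E_mnl_entonces_B_mnl}, so I would simply invoke it; the hypothesis that $p$ is a minimal map is not even needed here.

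For the converse, suppose $B$ is minimal. I would argue by contradiction: assume some $e_0\in E$ is a beat point of $E$ and split into two cases. If $e_0$ is a down beat point of $E$, then part~(1) of Proposition~\ref{prop:p_fib_cerrada_ubp_p_o_ubp_fibra} gives that either $p(e_0)$ is a down beat point of $B$ or $e_0$ is a down beat point of $p$; the first alternative contradicts minimality of $B$, the second contradicts minimality of the map $p$, so this case is impossible. If instead $e_0$ is an up beat point of $E$, I would first observe that since $p$ is a minimal map it has in particular no down beat points, so the hypothesis of part~(2) of Proposition~\ref{prop:p_fib_cerrada_ubp_p_o_ubp_fibra} is met; that proposition then yields that $p(e_0)$ is an up beat point of $B$ or $e_0$ is an up beat point of $p$, and again each alternative contradicts minimality of $B$ or of $p$ respectively. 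Hence $E$ has no beat points, i.e.\ $E$ is minimal.

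The only subtlety worth flagging is in the up--beat--point case: one must check that part~(2) of Proposition~\ref{prop:p_fib_cerrada_ubp_p_o_ubp_fibra} is applicable, which requires $p$ to have no down beat points, and this is immediate from $p$ being minimal. Beyond that there is no genuine obstacle at this level: the difficulty is entirely concentrated in Proposition~\ref{prop:p_fib_cerrada_ubp_p_o_ubp_fibra} and, below it, in the closedness of a down--beat--point--free fibration (Proposition~\ref{prop:fin_sin_dbp_cerrada}) together with the weak dual lifting statement of Theorem~\ref{theo:Fe_cap_fibra_b}; granting those, the present theorem is pure bookkeeping.
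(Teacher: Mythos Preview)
Your proposal is correct and follows essentially the same approach as the paper: both directions are handled exactly as you describe, invoking Corollary~\ref{coro:E_mnl_entonces_B_mnl} for the forward implication and Proposition~\ref{prop:p_fib_cerrada_ubp_p_o_ubp_fibra} (using that a minimal $p$ has no down beat points) for the converse. Your explicit case split and contradiction framing are slightly more verbose than the paper's one-line appeal to Proposition~\ref{prop:p_fib_cerrada_ubp_p_o_ubp_fibra}, but the logic is identical.
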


\begin{proof}
In \ref{coro:E_mnl_entonces_B_mnl} we have proved that if $E$ is minimal then $B$ is also minimal. Thus suppose that $B$ is minimal. Since $p$ does not have down beat points, it follows from \ref{prop:p_fib_cerrada_ubp_p_o_ubp_fibra} that if $e_0$ is a beat point of $E$ then $e_0$ is a beat point of $p$ or $p(e_0)$ is a beat point of $B$. Since $p$ and $B$ do not have beat points, it is clear that $E$ will not have beat points either.
\end{proof}

A Hurewicz fibration $p\colon E\to B$ is called \emph{trivial} if it is fiber homotopy equivalent to the projection $B\times F\to B$ for some space $F$ which is homotopy equivalent to the fibers of $p$. If $F'$ is a space which is homotopy equivalent to $F$, then the projection $B\times F\to B$ is fiber homotopy equivalent to the projection $B\times F'\to B$. It follows that $p$ is fiber homotopy equivalent to the projection $B\times F'\to B$.

\begin{prop}
Let $E$ and $B$ be finite T$_0$--spaces such that $B$ is connected and let $p\colon E\to B$ be a trivial fibration which is a minimal map. Then $p$ is isomorphic to the projection $\pi_B\colon B\times F\to B$ for some minimal finite T$_0$--space $F$. In particular, the fibers of $p$ are minimal spaces.
\end{prop}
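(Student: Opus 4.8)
The plan is to reduce to a projection of a genuinely \emph{minimal} finite T$_0$--space and then to invoke a rigidity property of minimal maps. By definition of triviality, $p$ is fiber homotopy equivalent to the projection $B\times F'\to B$ for some space $F'$ homotopy equivalent to the fibers of $p$. Since $E$ and $B$ are finite, each fiber of $p$ is a finite T$_0$--space, and since a Hurewicz fibration over a path-connected space is surjective, all fibers are nonempty; fix $b_0\in B$ and let $F$ be a core of $p^{-1}(b_0)$, which is then a minimal finite T$_0$--space homotopy equivalent to $p^{-1}(b_0)$, hence to $F'$. By the observation recalled just before the statement, the projection $B\times F'\to B$ is fiber homotopy equivalent to $\pi_B\colon B\times F\to B$, and therefore so is $p$.

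Next I would observe that $\pi_B\colon B\times F\to B$ is itself a minimal map: for each $b\in B$ the fiber $\pi_B^{-1}(b)=\{b\}\times F$ is homeomorphic to $F$, which has no beat points, and a beat point of $\pi_B$ must in particular be a beat point of the fiber through it, so $\pi_B$ has none.

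The heart of the argument is then immediate from \ref{coro:p_mnl_entonces_h_es_id}. Let $f\colon p\to\pi_B$ be a fiber homotopy equivalence with fiber homotopy inverse $g\colon\pi_B\to p$; then $gf\colon p\to p$ and $fg\colon\pi_B\to\pi_B$ are arrows over $B$ which are homotopic over $B$ to $\id_p$ and $\id_{\pi_B}$ respectively. Since $p$ is minimal, \ref{coro:p_mnl_entonces_h_es_id} gives $gf=\id_p$, and since $\pi_B$ is minimal the same corollary gives $fg=\id_{\pi_B}$. Thus $f$ is an isomorphism in $\fintopzero/B$, so $p$ is isomorphic over $B$ to $\pi_B\colon B\times F\to B$. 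For the last assertion, an isomorphism over $B$ restricts, over each $b\in B$, to a homeomorphism $p^{-1}(b)\to\{b\}\times F\cong F$, so every fiber of $p$ is homeomorphic to the minimal space $F$.

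I do not expect a serious obstacle. The two substantive points are (i) replacing the abstract model fiber $F'$ by a genuine minimal finite T$_0$--space $F$ (a core of a fiber), using the stability of the fiber homotopy type of product fibrations under homotopy equivalence of the fiber, and (ii) checking that $\pi_B\colon B\times F\to B$ is a minimal map so that the rigidity corollary \ref{coro:p_mnl_entonces_h_es_id} applies on both sides; once these are in place, the conclusion is a two-line consequence of that corollary.
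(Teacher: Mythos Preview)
Your proposal is correct and follows essentially the same approach as the paper: take $F$ to be a core of a fiber, note that $\pi_B\colon B\times F\to B$ is then a minimal map, and apply \ref{coro:p_mnl_entonces_h_es_id} on both sides to upgrade the fiber homotopy equivalence to an isomorphism over $B$. Your write-up is slightly more explicit about why one may replace the model fiber by a minimal one and why $\pi_B$ is minimal, but the argument is the same.
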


\begin{proof}
Let $F$ be the core of a fiber of $p$. Since $p$ is a trivial fibration and its fibers have the homotopy type of $F$, then $p$ is fiber homotopy equivalent to the projection $\pi_B\colon B\times F\to B$. Let $f\colon p\to \pi_B$ be a homotopy equivalence over $B$ with inverse $g$. Since $F$ is minimal, then $\pi_B$ is minimal. And since $p$ and $\pi_B$ are minimal maps, it follows from \ref{coro:p_mnl_entonces_h_es_id} that $fg=\id_{\pi_B}$ and $gf=\id_{p}$. Hence $f$ and $g$ are isomorphisms over $B$. Thus $p\cong \pi_B$. In particular, the fibers of $p$ are all homeomorphic to $F$, which is a minimal space.
\end{proof}

\section{Hurewicz and Grothendieck fibrations}

A continuous map between T$_0$ topological spaces can be viewed as a functor between posets and hence we can study whether it is a Grothendieck fibration or not (cf. definition \ref{defi:fib_grothendieck}). Since in a poset the arrows only depend on its domain and its codomain, the definition of Grothendieck fibration between posets adopts a much simpler form that is stated in the following lemma.

\begin{lemma}\label{lemm:fib_groth_entre_posets}
Let $E$ and $B$ be Alexandroff T$_0$--spaces and let $p\colon E\to B$ be a continuous map. For each $e\in E$ and each $b\leq p(e)$, the unique arrow in $B$ of $b$ to $p(e)$ has a cartesian lift to $e$ if and only if there exists $e'\in p^{-1}(b)$ such that $e'=\max (U_e\cap p^{-1}(U_b))$, in which case, the arrow $e'\to e$ is the only cartesian lift of $b\to p(e)$ to $e$.

In particular, $p$ is a Grothendieck fibration if and only if for all $e\in E$ and all $b\leq p(e)$, the set $U_e\cap p^{-1}(U_b)$ has a maximum element and this maximum element belongs to $p^{-1}(b)$.
\end{lemma}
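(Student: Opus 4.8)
The plan is to unwind the definition of cartesian arrow in the case where $E$ and $B$ are posets (regarded as categories), exploiting the fact that between any two objects of a poset there is at most one arrow, so that most of the data and conditions in the definition of cartesian arrow become automatic. Concretely, I would first record the dictionary: for $e,e''\in E$ an arrow $e''\to e$ exists iff $e''\leq e$, i.e.\ iff $e''\in U_e$; and for $b\in B$ an arrow $p(e'')\to b$ exists iff $p(e'')\leq b$, i.e.\ iff $e''\in p^{-1}(U_b)$. Moreover, whenever the relevant arrows exist, the equalities $p(g)=p(f)h$, $f\tilde h=g$ and $p(\tilde h)=h$ hold automatically (all of them are identities between the unique arrows joining their endpoints), and the uniqueness of $\tilde h$ is automatic. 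Hence, for fixed $e\in E$ and $b\leq p(e)$, an arrow $f\colon e'\to e$ with $p(e')=b$ is cartesian \emph{if and only if} every $e''$ with $e''\leq e$ and $p(e'')\leq b$ satisfies $e''\leq e'$, equivalently $U_e\cap p^{-1}(U_b)\subseteq U_{e'}$.

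With this characterization in hand, the first equivalence is immediate. If the arrow $b\to p(e)$ has a cartesian lift $f\colon e'\to e$, then by definition $p(e')=b$, so $e'\in p^{-1}(b)$, and $e'\in U_e\cap p^{-1}(U_b)$ (since $e'\leq e$ and $p(e')=b\leq b$); the characterization then gives $U_e\cap p^{-1}(U_b)\subseteq U_{e'}$, so $e'$ is the greatest element of $U_e\cap p^{-1}(U_b)$. Conversely, if there is $e'\in p^{-1}(b)$ with $e'=\max(U_e\cap p^{-1}(U_b))$, then $e'\leq e$ yields an arrow $f\colon e'\to e$ lifting $b\to p(e)$, and since every element of $U_e\cap p^{-1}(U_b)$ is $\leq e'$, the characterization shows $f$ is cartesian. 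For the uniqueness clause, any cartesian lift of $b\to p(e)$ to $e$ must, by the above, have domain equal to $\max(U_e\cap p^{-1}(U_b))=e'$, and there is exactly one arrow $e'\to e$ in $E$.

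Finally, the ``in particular'' statement follows by applying this equivalence to every $e\in E$ and every $b\leq p(e)$, since these $b\leq p(e)$ are precisely the arrows $b\to p(e)$ of $B$ occurring in the definition of Grothendieck fibration. I do not expect a genuine obstacle; the only point requiring care is to keep the two conditions ``$e'\in p^{-1}(b)$'' and ``$e'=\max(U_e\cap p^{-1}(U_b))$'' logically separate, because the set $U_e\cap p^{-1}(U_b)$ may well possess a maximum lying in a fiber strictly below $b$, and in that situation there is no cartesian lift at all; and to double-check that the defining equations and uniqueness really do become vacuous in the poset setting.
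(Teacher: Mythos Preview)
Your proposal is correct and follows essentially the same approach as the paper: both arguments unwind the definition of a cartesian arrow in the poset setting, using that all equalities of arrows and the uniqueness of $\tilde h$ are automatic, to reduce cartesianness of $e'\to e$ to the condition $U_e\cap p^{-1}(U_b)\subseteq U_{e'}$. The only cosmetic difference is that the paper derives uniqueness of the cartesian lift from the general ``unique up to unique isomorphism'' fact, whereas you obtain it directly from the fact that the domain must equal $\max(U_e\cap p^{-1}(U_b))$; both are fine.
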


\begin{proof}
If the cartesian lift of an arrow to a given object exists, then it is unique up to composition with a unique isomorphism. Since in a poset there do not exist non-trivial isomorphisms, then the cartesian lift of an arrow $b\to p(e)$ for $e\in E$ and $b\leq p(e)$ is unique if it exists.

Let $e\in E$ and let $b\leq p(e)$. Suppose that there exists $e'$ such that the unique arrow $e'\to e$ is a cartesian lift of $b\to p(e)$. It is clear that $e'\in p^{-1}(b)$ and that $e'\in U_e\cap p^{-1}(U_b)$. Now let $e''\in U_e\cap p^{-1}(U_b)$. Since the arrow $e'\to e$ is cartesian, there exists an arrow $e''\to e'$. It follows that $e''\leq e'$. Hence $e'=\max U_e\cap p^{-1}(U_b)$.

Now suppose that $U_e\cap p^{-1}(U_b)$ has a maximum element $e'\in p^{-1}(b)$. It is clear that the arrow $e'\to e$ is a lift of $b\to p(e)$ to $e$. On the other hand, for each arrow $e''\to e$ such that $p(e'')\leq b$, we have that $e''\in p^{-1}(U_b)$ and hence $e''\leq e'$. Therefore there exists a unique arrow $e''\to e'$ and the composition $e''\to e'\to e$ is equal to $e''\to e$. Thus, the arrow $e'\to e$ is cartesian. The result follows.
\end{proof}

\begin{rem}\label{rema:cleavage_for_opfibration}
It follows from \ref{lemm:fib_groth_entre_posets} that a continuous map $p\colon E\to B$ between Alexandroff T$_0$--spaces is a Grothendieck opfibration if and only if for all $e\in E$ and all $b\geq p(e)$, the set $F_e\cap p^{-1}(F_b)$ has a minimum element and this minimum element belongs to $p^{-1}(b)$.
\end{rem}

In \cite{grothendieck1971revetement} it is observed that if a category $E$ does not have non-trivial isomorphisms, then the unique possible cleavage for a Grothendieck fibration $p\colon E\to B$ is closed. This is also mentioned in section B1.3 of \cite{johnstone2002sketches}. In particular, $p$ is a split fibration. We will prove the particular case of this relevant fact in our context, that is, in the case that $p$ is a Grothendieck fibration between posets.

\begin{lemma}\label{lemm:fib_entre_posets_es_escindida}
Let $E$ and $B$ be Alexandroff T$_0$--spaces and let $p\colon E\to B$ be a Grothendieck fibration (resp. Grothendieck opfibration). Then there exists a unique cleavage (resp. opcleavage) for $p$ and that cleavage is closed.
\end{lemma}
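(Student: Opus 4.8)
The plan is to reduce everything to Lemma~\ref{lemm:fib_groth_entre_posets} and then verify the two closure conditions by a short computation with the smallest-neighbourhood sets. Since $B$ is a poset, an arrow $f\colon b\to p(e)$ is nothing but the relation $b\le p(e)$ and there is exactly one such arrow; by the first part of Lemma~\ref{lemm:fib_groth_entre_posets} its cartesian lift to $e$, if it exists, is unique, and it is the arrow $e_b\to e$ with $e_b=\max\big(U_e\cap p^{-1}(U_b)\big)\in p^{-1}(b)$. Since $p$ is a Grothendieck fibration this maximum exists for every $e\in E$ and every $b\le p(e)$, so there is one and only one cleavage $\phi$ for $p$, given by $\phi^f_e=(e_b\to e)$; this already settles uniqueness, and it remains only to check that $\phi$ is closed.

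For the identity axiom I would observe that $e\in U_e\cap p^{-1}(U_{p(e)})$ and $e=\max U_e$, so $e$ is also the maximum of $U_e\cap p^{-1}(U_{p(e)})$, whence $\phi^{\id_{p(e)}}_e$ is the arrow $e\to e$, i.e.\ $\phi^{\id_{p(e)}}_e=\id_e$. For the composition axiom, given arrows $f\colon b'\to p(e)$ and $g\colon b''\to b'$ of $B$, I would write $e'=\max\big(U_e\cap p^{-1}(U_{b'})\big)$ for the domain of $\phi^f_e$, note that $b''\le b'=p(e')$ so that $\phi^g_{e'}$ is defined with domain $e'''=\max\big(U_{e'}\cap p^{-1}(U_{b''})\big)$, and note that $\phi^{fg}_e$ has domain $e''=\max\big(U_e\cap p^{-1}(U_{b''})\big)$. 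Because $E$ and $B$ are posets, the equality $\phi^{fg}_e=\phi^f_e\phi^g_{e'}$ of arrows with codomain $e$ is equivalent to $e''=e'''$. The inclusion $U_{e'}\subseteq U_e$ (from $e'\le e$) gives $U_{e'}\cap p^{-1}(U_{b''})\subseteq U_e\cap p^{-1}(U_{b''})$, hence $e'''\le e''$; conversely $e''\le e$ together with $p(e'')=b''\le b'$ shows $e''\in U_e\cap p^{-1}(U_{b'})$, so $e''\le e'$, and then $e''\in U_{e'}\cap p^{-1}(U_{b''})$, so $e''\le e'''$. Thus $e''=e'''$, which is exactly the closure condition.

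The statement for opfibrations will follow by applying the fibration case to $p^{\op}\colon E^{\op}\to B^{\op}$, or equivalently by repeating the argument with $F_e$, $F_b$ in place of $U_e$, $U_b$ and invoking Remark~\ref{rema:cleavage_for_opfibration} instead of Lemma~\ref{lemm:fib_groth_entre_posets}. The only step that is not pure bookkeeping is the identity $\max\big(U_e\cap p^{-1}(U_{b''})\big)=\max\big(U_{e'}\cap p^{-1}(U_{b''})\big)$ used for closure under composition; I expect everything else to be forced immediately by the poset structure through Lemma~\ref{lemm:fib_groth_entre_posets}, so the whole proof should be quite short.
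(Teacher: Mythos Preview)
Your proof is correct and follows essentially the same approach as the paper: both deduce uniqueness of the cleavage from the uniqueness of cartesian lifts in Lemma~\ref{lemm:fib_groth_entre_posets}, and both verify closure under composition by showing $\max\big(U_e\cap p^{-1}(U_{b''})\big)=\max\big(U_{e'}\cap p^{-1}(U_{b''})\big)$ via the two inclusions $U_{e'}\subseteq U_e$ and $p^{-1}(U_{b''})\subseteq p^{-1}(U_{b'})$. The only difference is cosmetic---your treatment of the identity axiom is slightly more explicit than the paper's, and the opfibration case is handled identically by passing to $p^{\op}$.
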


\begin{proof}
Suppose that $p$ is a Grothendieck fibration. By the previous lemma, if $e\in E$ and $b\leq p(e)$ then there exists a unique cartesian lift $e'\to e$ of $b\to p(e)$ to $e$ that is the unique arrow of $e'$ to $e$, where $e'=\max (U_e\cap p^{-1}(U_b))$ and $e'\in p^{-1}(b)$. It follows that there exists a unique cleavage for $p$. We will prove that this cleavage is closed. 

It is clear that the cartesian lift of $\id_{p(e)}$ to $e$ is $\id_e$, for all $e\in E$. Now, suppose that $b'\leq b\leq p(e)$. We want to prove that the cartesian lift of $b'\to p(e)$ to $e$ coincides with the composition $e''\to e'\to e$ where $e'\to e$ is the cartesian lift of $b\to p(e)$ to $e$ and $e''\to e'$ is the cartesian lift of $b'\to b$ to $e'$. In other words, we want to prove that 
\[\max(U_e \cap p^{-1}(U_{b'}))=\max (U_{e'}\cap p^{-1}(U_{b'}))\]
where $e'=\max (U_e\cap p^{-1}(U_{b}))$.

Since $e'\leq e$ it follows that $U_{e'}\subseteq U_e$, from where we obtain that $U_{e'}\cap p^{-1}(U_{b'})\subseteq U_e\cap p^{-1}(U_{b'})$ and hence, that
\[\max(U_e \cap p^{-1}(U_{b'}))\geq\max (U_{e'}\cap p^{-1}(U_{b'})).\]
On the other hand, as $b'\leq b$, then $U_{b'}\subseteq U_b$ from where we obtain that $p^{-1}(U_{b'})\subseteq p^{-1}(U_b)$. Thus, \[\max (U_e\cap p^{-1}(U_{b'}))\leq \max(U_e\cap p^{-1}(U_b))=e'.\]
Hence,
\[\max (U_e\cap p^{-1}(U_{b'}))\in U_{e'}\cap p^{-1}(U_{b'}).\]
Thus, it is clear that 
\[\max (U_e\cap p^{-1}(U_{b'}))\leq \max( U_{e'}\cap p^{-1}(U_{b'})).\]
The result follows.

The case in which $p$ is a Grothendieck opfibration follows applying the previous case to $p^{\op}$.
\end{proof}

It is known that the split Grothendieck fibrations are those functors that can be realized as Grothendieck constructions over contravariant functors to $\cat$ (Theorem 1.3.5, B1.3 of \cite{johnstone2002sketches}, see also \cite{gray1966fibred}). In a similar way, the split Grothendieck opfibrations are those functors that can be realized as Grothendieck constructions over covariant functors to $\cat$.

Now, it is clear that if $p\colon E\to B$ is a Grothendieck opfibration between posets, then the fibers of $p$ are posets and hence $p$ can be realized as the Grothendieck construction over a covariant functor to $\pos$, or equivalently, as the topological Grothendieck construction over a covariant functor to $\Top$. Conversely, if $p$ is the projection associated to the topological Grothendieck construction over a functor from a poset $B$ in $\Top$ that maps each element of $B$ to an Alexandroff T$_0$--space, then it coincides with the classical Grothendieck construction over a functor to $\cat$ and hence it is a Grothendieck opfibration. Thus, the Grothendieck opfibrations over $B$ between posets are those functors that can be realized, in a canonical way, as projections associated to topological Grothendieck constructions of functors from $B$ to $\pos$.

In a similar way, the Grothendieck fibrations over $B$ between posets are those functors that can be realized as projections associated to topological Grothendieck constructions of functors from $B^{\op}$ to $\pos$.

\begin{theo}\label{theo:p_alpha_beta}
Let $E$ and $B$ be Alexandroff T$_0$--spaces and let $p\colon E\to B$ be a continuous map.
\begin{enumerate}
\item The map $p$ is a Grothendieck fibration if and only if there exists a functor $\alpha\colon B^{\op}\to \Top$ 
such that $\pi^{\alpha}_{B^{\op}}\cong p^{\op}$. In that case, the functor $\alpha$ can be defined canonically by
\[\alpha(b)=(p^{\op})^{-1}(b)=p^{-1}(b)^{\op}\] 
for all $b\in B$ and by
\[\alpha(b'\geq b)(e)=\max(U_e\cap p^{-1}(b))\]
for all $e\in p^{-1}(b')$ and all $b,b'\in B$ such that $b\leq b'$.
\item $p$ is a Grothendieck opfibration if and only if there exists a functor $\beta\colon B\to \Top$ such that $\pi^{\beta}_{B}\cong p$. In that case, the functor $\beta$ can be canonically defined by
\[\beta(b)=p^{-1}(b)\] 
for all $b\in B$ and by 
\[\beta(b\leq b')(e)=\min(F_e\cap p^{-1}(b'))\] 
for all $e\in p^{-1}(b)$ and all $b,b'\in B$ such that $b\leq b'$.
\end{enumerate}
\end{theo}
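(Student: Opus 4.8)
The plan is to prove part (2) directly and then derive part (1) from it by passing to opposite categories. For the ``if'' direction of (2) I would first note that each fibre $p^{-1}(b)$ is an Alexandroff T$_0$--space, being a subspace of $E$; hence, if $\beta\colon B\to\Top$ is a functor with $\pi^{\beta}_{B}\cong p$ over $B$, then restricting this isomorphism to the fibre over $b$ shows that $\beta(b)$ is homeomorphic to $p^{-1}(b)$, so $\beta$ automatically takes Alexandroff T$_0$--values. For such a functor on a poset the topological Grothendieck construction agrees with the ordinary Grothendieck construction of $\beta$ regarded as a functor into $\cat$ (cf.\ \cite{cianci2019classification} and the discussion before the theorem), and the latter is always a Grothendieck opfibration; since being a Grothendieck opfibration is preserved by isomorphisms over $B$, it follows that $p$ is a Grothendieck opfibration.

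For the ``only if'' direction of (2) I would assume $p$ is a Grothendieck opfibration, define $\beta$ by the stated formulas, and verify the three things required: that $\beta$ is well defined, that it is a functor, and that $\int\beta$ is isomorphic to $p$ over $B$. Well-definedness is where \ref{rema:cleavage_for_opfibration} enters: for $e\in p^{-1}(b)$ and $b'\geq b$ the set $F_e\cap p^{-1}(F_{b'})$ has a minimum lying in $p^{-1}(b')$, whence that minimum is the least element of $F_e\cap p^{-1}(b')$ and is the codomain of the unique cocartesian lift of $p(e)\to b'$ from $e$. Order-preservation of each $\beta(b\leq b')$ and preservation of identities are immediate, while preservation of composites is precisely the assertion that the unique opcleavage of $p$ is closed, i.e.\ \ref{lemm:fib_entre_posets_es_escindida}. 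For the isomorphism I would take $\Phi\colon\int\beta\to E$, $\Phi(b,e)=e$; it is clearly a bijection over $B$, and to see it is a homeomorphism I would use that, for a functor on a poset taking Alexandroff values, $(b_1,e_1)\leq(b_2,e_2)$ in $\int\beta$ if and only if $b_1\leq b_2$ and $\beta(b_1\leq b_2)(e_1)\leq e_2$, and since $e_1\leq\beta(b_1\leq b_2)(e_1)$ this is equivalent to $e_1\leq e_2$ in $E$.

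Part (1) I would obtain by applying part (2) to $p^{\op}\colon E^{\op}\to B^{\op}$, which is again a continuous map between Alexandroff T$_0$--spaces: by Definition \ref{defi:fib_grothendieck}, $p$ is a Grothendieck fibration if and only if $p^{\op}$ is a Grothendieck opfibration, if and only if there is a functor $\alpha\colon B^{\op}\to\Top$ with $\pi^{\alpha}_{B^{\op}}\cong p^{\op}$. The canonical $\alpha$ coming from (2) is $\alpha(b)=(p^{\op})^{-1}(b)=p^{-1}(b)^{\op}$ together with $\alpha(c\leq c')(e)=\min_{E^{\op}}\big(F^{E^{\op}}_e\cap(p^{\op})^{-1}(c')\big)$; since $F^{E^{\op}}_e=U^{E}_e$ and minima in $E^{\op}$ are maxima in $E$, rewriting the order relation back in $B$ turns this into $\alpha(b'\geq b)(e)=\max(U_e\cap p^{-1}(b))$ for $e\in p^{-1}(b')$ and $b\leq b'$, as stated.

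I expect the main obstacle to be the passage between the topological and the classical Grothendieck constructions for Alexandroff-valued functors on a poset — which is what licenses the use of ``Grothendieck constructions are opfibrations'' — together with the verification, via \ref{lemm:fib_entre_posets_es_escindida}, that the canonically defined $\beta$ respects composition; by comparison, the order-isomorphism $\int\beta\cong E$ and the bookkeeping with opposite categories in part (1) should be routine.
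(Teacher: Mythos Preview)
Your proposal is correct and follows essentially the same route as the paper: prove item~(2) using the unique closed opcleavage furnished by \ref{rema:cleavage_for_opfibration} and \ref{lemm:fib_entre_posets_es_escindida}, then deduce item~(1) by applying~(2) to $p^{\op}$. The only cosmetic difference is that the paper dispatches the ``only if'' direction of~(2) by invoking the covariant version of \ref{theo:pi_B_iso_p_sobre_B}, whereas you unpack that reference by writing down the map $\Phi(b,e)=e$ and checking directly that it is an order-isomorphism; your verification of this (using $e_1\leq\beta(b_1\leq b_2)(e_1)$ and, conversely, $e_2\in F_{e_1}\cap p^{-1}(b_2)$ whenever $e_1\leq e_2$) is exactly what lies behind that citation.
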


\begin{proof}
Item $(2)$ follows from the version of \ref{theo:pi_B_iso_p_sobre_B} for covariant functors applying the unique opcleavage for $p$ that can be obtained from \ref{rema:cleavage_for_opfibration}, which is closed by \ref{lemm:fib_entre_posets_es_escindida}.

Item $(1)$ can be proved as follows. If $p$ is a Grothendieck fibration then $p^{\op}$ is a Grothendieck opfibration and hence, by $(2)$, there exists a functor $\alpha\colon B^{\op}\to \Top$ such that $\pi^{\alpha}_{B^{\op}}\cong p^{\op}$. Conversely, if there exists a functor $\alpha$ such that $\pi^{\alpha}_{B^{\op}}\cong p^{\op}$, since $\pi^{\alpha}_{B^{\op}}$ is a Grothendieck opfibration, we have that $p^{\op}$ is also a Grothendieck opfibration. Then $p$ is a Grothendieck fibration.
\end{proof}

In the following lemma, we state some properties of the functors $\alpha$ and $\beta$ constructed in \ref{theo:p_alpha_beta}.

\begin{lemma}\label{lemm:props_de_alpha_beta}
Let $E$ and $B$ be finite T$_0$--spaces and let $p\colon E\to B$ be a Grothendieck bifibration. Let $\alpha$ and $\beta$ the functors defined in \ref{theo:p_alpha_beta} and let $b,b'\in B$ such that $b\leq b'$. Then
\begin{enumerate}
\item $\alpha(b'\geq b)\beta(b\leq b')\geq \id_{p^{-1}(b)}$.
\item $\beta(b\leq b')\alpha(b'\geq b)\beta(b\leq b')=\beta(b\leq b')$.
\item $\beta(b\leq b')\alpha(b'\geq b)\leq \id_{p^{-1}(b')}$.
\item $\alpha(b'\geq b)\beta(b\leq b')\alpha(b'\geq b)=\alpha(b'\geq b)$.
\end{enumerate}
\end{lemma}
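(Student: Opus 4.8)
The plan is to reduce the four statements to items (1) and (3), which are pointwise facts that I would read off directly from the formulas for $\alpha$ and $\beta$ in Theorem~\ref{theo:p_alpha_beta}, and then derive (2) and (4) from (1) and (3) by a purely formal argument.

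First I would introduce the shorthand $\bar\alpha=\alpha(b'\geq b)\colon p^{-1}(b')\to p^{-1}(b)$ and $\bar\beta=\beta(b\leq b')\colon p^{-1}(b)\to p^{-1}(b')$, so that $\bar\alpha(e)=\max\bigl(U_e\cap p^{-1}(b)\bigr)$ for $e\in p^{-1}(b')$ and $\bar\beta(e)=\min\bigl(F_e\cap p^{-1}(b')\bigr)$ for $e\in p^{-1}(b)$; both extrema exist since $p$ is a Grothendieck bifibration (Theorem~\ref{theo:p_alpha_beta}, cf.\ Lemma~\ref{lemm:fib_groth_entre_posets} and Remark~\ref{rema:cleavage_for_opfibration}), and both maps are order-preserving, being continuous maps between finite T$_0$–spaces. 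For (1), given $e\in p^{-1}(b)$, I would observe that $\bar\beta(e)\in F_e$, so $e\leq\bar\beta(e)$, i.e.\ $e\in U_{\bar\beta(e)}$; since also $e\in p^{-1}(b)$, this gives $e\in U_{\bar\beta(e)}\cap p^{-1}(b)$ and hence $\bar\alpha\bar\beta(e)=\max\bigl(U_{\bar\beta(e)}\cap p^{-1}(b)\bigr)\geq e$. Dually, for (3), given $e\in p^{-1}(b')$, one has $\bar\alpha(e)\in U_e$, so $\bar\alpha(e)\leq e$, i.e.\ $e\in F_{\bar\alpha(e)}$; since also $e\in p^{-1}(b')$, we get $e\in F_{\bar\alpha(e)}\cap p^{-1}(b')$ and hence $\bar\beta\bar\alpha(e)=\min\bigl(F_{\bar\alpha(e)}\cap p^{-1}(b')\bigr)\leq e$.

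To finish, I would obtain (2) and (4) formally. Composing the inequality $\bar\alpha\bar\beta\geq\id_{p^{-1}(b)}$ of (1) on the left with the order-preserving map $\bar\beta$ yields $\bar\beta\bar\alpha\bar\beta\geq\bar\beta$, and composing the inequality $\bar\beta\bar\alpha\leq\id_{p^{-1}(b')}$ of (3) on the right with $\bar\beta$ yields $\bar\beta\bar\alpha\bar\beta\leq\bar\beta$; together these give (2). Symmetrically, composing (3) on the left with $\bar\alpha$ and (1) on the right with $\bar\alpha$ gives $\bar\alpha\bar\beta\bar\alpha\leq\bar\alpha$ and $\bar\alpha\bar\beta\bar\alpha\geq\bar\alpha$, hence (4). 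The argument involves no real obstacle; the only point requiring attention is the bookkeeping of domains and codomains, ensuring that each of $\bar\alpha\bar\beta$ and $\bar\beta\bar\alpha$ is an endomap of the fibre against whose identity it is being compared, and that the two triple composites indeed have the same domain and codomain as $\bar\beta$ and $\bar\alpha$ respectively.
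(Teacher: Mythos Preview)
Your proof is correct and follows essentially the same approach as the paper: both arguments prove (1) and (3) directly from the explicit formulas $\bar\alpha(e)=\max(U_e\cap p^{-1}(b))$ and $\bar\beta(e)=\min(F_e\cap p^{-1}(b'))$, and then derive (2) and (4) by composing the inequalities (1) and (3) with $\bar\beta$ and $\bar\alpha$ on the appropriate side. Your write-up is in fact slightly more explicit than the paper's about why the compositions preserve the inequalities (order-preservation of $\bar\alpha$ and $\bar\beta$).
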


\begin{proof}
First we will prove item $(1)$. Let $e\in p^{-1}(b)$. Then $e\leq \beta(b\leq b')(e)$ and hence, $e\in U_{\beta(b\leq b')(e)}\cap p^{-1}(b)$ from where it is clear that $\alpha(b'\geq b)\beta(b\leq b')(e)\geq e$. 

Item $(3)$ can be proved in a similar way.

From $(1)$ it follows that 
\[\alpha(b'\geq b)\beta(b\leq b')\alpha(b'\geq b)\geq \alpha(b'\geq b)\] 
and that 
\[\beta(b\leq b')\alpha(b'\geq b)\beta(b\leq b')\geq \beta(b\leq b'),\] 
while from $(3)$ it follows that 
\[\beta(b\leq b')\alpha(b'\geq b)\beta(b\leq b')\leq \beta(b\leq b')\] 
and that \[\alpha(b'\geq b)\beta(b\leq b')\alpha(b'\geq b)\leq \alpha(b'\geq b).\] 
Thus, items $(2)$ and $(4)$ follow.
\end{proof}

\begin{rem} \label{rem_alpha_beta_ubp_dbp_retracts}
Let $E$ and $B$ be finite T$_0$--spaces, let $p\colon E\to B$ be a Grothendieck bifibration and let $\alpha$ and $\beta$ be the functors defined in \ref{theo:p_alpha_beta}. Let $b,b'\in B$ be such that $b\leq b'$. From \ref{lemm:props_de_alpha_beta} we obtain that $\alpha(b'\geq b)$ and $\beta(b\leq b')$ are homotopy equivalences. 
Moreover, from \ref{lemm:props_de_alpha_beta} and \ref{theo:dbpr_equivalences} it follows that $\alpha(b'\geq b)\beta(b\leq b')(p^{-1}(b))$ is an ubp--retract of $p^{-1}(b)$ that is homeomorphic to the space $\beta(b\leq b')\alpha(b'\geq b)(p^{-1}(b'))$, which is a dbp--retract of $p^{-1}(b')$.

However, as the following example shows, in general it is not true that the minimum ubp--retract of $p^{-1}(b)$ is homeomorphic to a dbp--retract of $p^{-1}(b')$, neither is it true that the smallest dbp--retract of $p^{-1}(b')$ is homeomorphic to an ubp--retract of $p^{-1}(b)$. 

In particular, in general it is not true that the minimum ubp--retract of $p^{-1}(b)$ is homeomorphic to the smallest dbp--retract of $p^{-1}(b')$.
\end{rem}

\begin{ex}
Let $B$ be the poset with underlying set $\{a,b,c,d\}$ with partial order generated by $a<b$, $c<b$ and $c<d$ and let $\S$ be the Sierpinski space. Let $\pi_\S\colon B\times\S\to \S$ be the canonical projection, which is represented in the following diagram.

\begin{center}
\begin{tikzpicture}[x=1.5cm,y=1.5cm]
\tikzstyle{every node}=[font=\footnotesize]

\draw (0,1) node (d0){$\bullet$} node[left]{$(d,0)$};
\draw (1,0) node (c0){$\bullet$} node[below=1]{$(c,0)$};
\draw (2,1) node (b0){$\bullet$} node[above right]{$(b,0)$};
\draw (3,0) node (a0){$\bullet$} node[below=1]{$(a,0)$};

\draw (0,3) node (d1){$\bullet$} node[above=1]{$(d,1)$};
\draw (1,2) node (c1){$\bullet$} node[below left]{$(c,1)$};
\draw (2,3) node (b1){$\bullet$} node[above=1]{$(b,1)$};
\draw (3,2) node (a1){$\bullet$} node[right]{$(a,1)$};

\draw (a0)--(a1);
\draw (b0)--(b1);
\draw (c0)--(c1);
\draw (d0)--(d1);
\draw (a0)--(b0);
\draw (c0)--(b0);
\draw (c0)--(d0);
\draw (a1)--(b1);
\draw (c1)--(b1);
\draw (c1)--(d1);

\draw (3.5,1) node (s){};
\draw (4.5,1) node (t){};
\draw[\arr] (s) -> (t) node [midway,below] {$\pi_\S$};
\draw (5,0.5) node (0){$\bullet$} node [below=1]{$0$};
\draw (5,1.5) node (1){$\bullet$} node [above=1]{$1$};
\draw (0)--(1);
\end{tikzpicture}
\end{center}

It is easy to see that $\pi_\S$ is a Grothendieck bifibration, that the minimal ubp--retract of $\pi^{-1}_\S(0)$ is not homeomorphic to any dbp--retract of $\pi^{-1}_\S(1)$ and that the minimal dbp--retract of $\pi^{-1}_\S(1)$ is not homeomorphic to any ubp--retract of $\pi^{-1}_\S(0)$.
\end{ex}

The following proposition follows immediately from \ref{rem_alpha_beta_ubp_dbp_retracts}.

\begin{prop}\label{prop_alpha_y_beta_fib_hom_eq}
Let $E$ and $B$ be finite T$_0$--spaces with $B$ connected and let $p\colon E\to B$ be a Grothendieck bifibration. Then the fibers of $p$ are homotopy equivalent.
\end{prop}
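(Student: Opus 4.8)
The plan is to reduce the statement to the relation case $b \leq b'$ already handled in Remark \ref{rem_alpha_beta_ubp_dbp_retracts}, and then propagate fiber homotopy equivalence along arbitrary zig-zags of order relations in $B$. First I would recall that, since $p$ is a Grothendieck bifibration, Theorem \ref{theo:p_alpha_beta} furnishes the two functors $\alpha$ and $\beta$, and that by Remark \ref{rem_alpha_beta_ubp_dbp_retracts}, for any $b \leq b'$ in $B$ the maps $\alpha(b' \geq b)\colon p^{-1}(b') \to p^{-1}(b)$ and $\beta(b \leq b')\colon p^{-1}(b) \to p^{-1}(b')$ are homotopy equivalences (mutually homotopy-inverse in the appropriate sense, via items (1) and (3) of Lemma \ref{lemm:props_de_alpha_beta} together with \ref{prop:f_leq_g_son_hom}). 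Hence whenever $b$ and $b'$ are comparable in $B$, the fibers $p^{-1}(b)$ and $p^{-1}(b')$ are homotopy equivalent.

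Next I would use the hypothesis that $B$ is connected. A finite $T_0$--space (equivalently a finite poset) is connected as a topological space if and only if its underlying order complex — or, what amounts to the same, the graph on $B$ whose edges join comparable pairs — is connected. Thus for any two points $b, b' \in B$ there is a finite sequence $b = b_0, b_1, \ldots, b_n = b'$ in which consecutive terms $b_{i-1}$ and $b_i$ are comparable. Applying the previous paragraph to each consecutive pair gives a chain of homotopy equivalences $p^{-1}(b) = p^{-1}(b_0) \simeq p^{-1}(b_1) \simeq \cdots \simeq p^{-1}(b_n) = p^{-1}(b')$, and composing them yields a homotopy equivalence $p^{-1}(b) \simeq p^{-1}(b')$. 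Since $b$ and $b'$ were arbitrary, all fibers of $p$ are homotopy equivalent, as claimed.

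The only point requiring any care — and the step I expect to be the mildest obstacle — is the passage from topological connectedness of $B$ to the existence of such a fence of comparable elements; this is standard for finite $T_0$--spaces (it is exactly the statement that the comparability graph of the associated poset is connected), so the argument is essentially immediate once this is recorded. Everything else is a direct invocation of Remark \ref{rem_alpha_beta_ubp_dbp_retracts} followed by transitivity of the homotopy equivalence relation.
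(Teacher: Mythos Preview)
Your proposal is correct and takes essentially the same approach as the paper: the paper simply states that the proposition follows immediately from Remark \ref{rem_alpha_beta_ubp_dbp_retracts}, and your argument is precisely the unpacking of that word ``immediately'' --- comparable fibers are homotopy equivalent by the remark, and connectedness of the finite $T_0$--space $B$ yields a fence of comparable elements between any two points, so transitivity finishes it.
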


\begin{theo}
Let $E$ and $B$ be finite T$_0$--spaces with $B$ connected and let $p\colon E\to B$ be a Grothendieck bifibration with minimal fibers. Then $p$ is a fiber bundle.
\end{theo}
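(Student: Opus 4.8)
The plan is to show that all fibers of $p$ are homeomorphic (not merely homotopy equivalent), and then invoke the classification of fiber bundles via the topological Grothendieck construction, Theorem \ref{theo:classification_fiber_bundles_Grothendieck_construction}. By Proposition \ref{prop_alpha_y_beta_fib_hom_eq} the fibers are already known to be homotopy equivalent; since they are all minimal finite T$_0$--spaces, Stong's theorem that homotopy equivalent minimal finite spaces are homeomorphic upgrades this to: all fibers of $p$ are homeomorphic to a single minimal space $F$.

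The key step is to promote the functor $\beta\colon B\to\Top$ of Theorem \ref{theo:p_alpha_beta}(2) — which satisfies $\pi^{\beta}_B\cong p$ — to a \emph{morphism-inverting} functor, so that Proposition \ref{prop:int_D_is_fiber_bundle} applies. Concretely, fix $b\leq b'$ in $B$. By Remark \ref{rem_alpha_beta_ubp_dbp_retracts} the maps $\beta(b\leq b')\colon p^{-1}(b)\to p^{-1}(b')$ and $\alpha(b'\geq b)\colon p^{-1}(b')\to p^{-1}(b)$ are mutually inverse homotopy equivalences. Now use that $p^{-1}(b)$ and $p^{-1}(b')$ are \emph{minimal}: from Lemma \ref{lemm:props_de_alpha_beta}(1) we have $\alpha(b'\geq b)\beta(b\leq b')\geq\id_{p^{-1}(b)}$, and since $p^{-1}(b)$ has no up beat points, Proposition \ref{prop_stong}(2) forces $\alpha(b'\geq b)\beta(b\leq b')=\id_{p^{-1}(b)}$; symmetrically, Lemma \ref{lemm:props_de_alpha_beta}(3) together with Proposition \ref{prop_stong}(1) applied to $p^{-1}(b')$ gives $\beta(b\leq b')\alpha(b'\geq b)=\id_{p^{-1}(b')}$. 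Hence $\beta(b\leq b')$ is a homeomorphism for every comparable pair, i.e.\ $\beta$ is morphism-inverting.

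It then follows from Proposition \ref{prop:int_D_is_fiber_bundle} that $\pi^{\beta}_B\colon\int\beta\to B$ is a fiber bundle over the connected space $B$ with fiber $p^{-1}(b_0)$ for any $b_0\in B$; and since $p\cong\pi^{\beta}_B$ as objects over $B$, the map $p$ is a fiber bundle as well.

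I expect the only genuine subtlety to be the correct bookkeeping of variance and of the two comparisons ``$\leq\id$'' versus ``$\geq\id$'': the fiber $p^{-1}(b)$ being minimal means it has neither up nor down beat points, so both halves of Proposition \ref{prop_stong} are available, but one must match each idempotent-up-to-order identity in Lemma \ref{lemm:props_de_alpha_beta} with the right clause. No serious obstacle arises beyond this; everything else is an application of results already established in the paper.
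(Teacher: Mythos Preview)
Your proof is correct and follows essentially the same route as the paper: use Lemma~\ref{lemm:props_de_alpha_beta} together with Proposition~\ref{prop_stong} (minimality of the fibers) to conclude that $\alpha(b'\geq b)$ and $\beta(b\leq b')$ are mutually inverse homeomorphisms, hence $\beta$ is morphism-inverting, and then apply Proposition~\ref{prop:int_D_is_fiber_bundle} and the isomorphism $p\cong\pi^\beta_B$ from Theorem~\ref{theo:p_alpha_beta}. The opening reference to Theorem~\ref{theo:classification_fiber_bundles_Grothendieck_construction} is superfluous (it is Proposition~\ref{prop:int_D_is_fiber_bundle} that you actually use, as does the paper), but this is a harmless expository detour, not a gap.
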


\begin{proof}
Consider the functors $\alpha$ and $\beta$ defined in \ref{theo:p_alpha_beta}. Since the fibers of $p$ are minimal, by \ref{prop_stong} and \ref{lemm:props_de_alpha_beta}, the maps $\alpha(b'\geq b)$ and $\beta(b\leq b')$ are mutually inverse homeomorphisms for every $b,b'\in B$ such that $b\leq b'$. In particular, $\beta$ is a morphism-inverting functor. It follows from \ref{prop:int_D_is_fiber_bundle} that $\pi^{\beta}_B$ is a fiber bundle. By \ref{theo:p_alpha_beta}, we have that $p\cong \pi^{\beta}_B$ and hence $p$ is also a fiber bundle.
\end{proof}

However, it is not true that a minimal Grothendieck bifibration between finite T$_0$--spaces is a fiber bundle, as the following example shows.

\begin{ex}
Let $B$ be the finite topological space with underlying set $\{a,b,c,d\}$ and with topology generated by the basis $\{\{a\},\{b\},\{a,c,d\},\{b,c,d\}\}$. Let $C$ be the finite topological space with underlying set $\{0,1,2\}$ and with topology generated by the basis $\{\{0\},\{0,1\},\{0,2\}\}$. Let $E=B\times C$ and let $p\colon E\to B$ be the projection map.
\medskip
\begin{center}
\begin{tikzpicture}[x=2cm,y=2cm]
\tikzstyle{every node}=[font=\footnotesize]
\draw (0.5,0) node (a0){$\bullet$} node[below=1]{$(a,0)$};
\draw (1,0) node (a1){$\bullet$} node[below=1]{$(a,1)$};
\draw (0,0.5) node (a2){$\bullet$} node[left=1]{$(a,2)$};
\draw (1.5,0) node (b0){$\bullet$} node[below=1]{$(b,0)$};
\draw (2,0) node (b1){$\bullet$} node[below=1]{$(b,1)$};
\draw (2.5,0.5) node (b2){$\bullet$} node[right=1]{$(b,2)$};
\draw (0.5,1.5) node (c0){$\bullet$} node[left=1]{$(c,0)$};
\draw (0,2) node (c1){$\bullet$} node[above=1]{$(c,1)$};
\draw (1,2) node (c2){$\bullet$} node[above=1]{$(c,2)$};
\draw (2,1.5) node (d0){$\bullet$} node[right=1]{$(d,0)$};
\draw (1.5,2) node (d1){$\bullet$} node[above=1]{$(d,1)$};
\draw (2.5,2) node (d2){$\bullet$} node[above=1]{$(d,2)$};

\draw (a0)--(a2);
\draw (a0)--(c0);
\draw (a1)--(a2);
\draw (a1)--(d0);
\draw (b0)--(b2);
\draw (b0)--(c0);
\draw (b1)--(b2);
\draw (b1)--(d0);
\draw (c0)--(c1);
\draw (c0)--(c2);
\draw (d0)--(d1);
\draw (d0)--(d2);
\draw (a2)--(c1);
\draw (a2)--(d1);
\draw (b2)--(c2);
\draw (b2)--(d2);
\draw (3,1) node (s){};
\draw (4,1) node (t){};
\draw[\arr] (s) -> (t) node [midway,below] {$p$};
\draw (4.5,0.5) node (a){$\bullet$} node [below=1]{$a$};
\draw (4.5,1.5) node (c){$\bullet$} node [above=1]{$c$};
\draw (5.5,0.5) node (b){$\bullet$} node [below=1]{$b$};
\draw (5.5,1.5) node (d){$\bullet$} node [above=1]{$d$};
\draw (a)--(c);
\draw (a)--(d);
\draw (b)--(c);
\draw (b)--(d);
\end{tikzpicture}
\end{center}

An exhaustive verification shows that $p$ is a Grothendieck bifibration and that $p$ is a minimal map. Since the fibers of $p$ are not homeomorphic, it is clear that $p$ is not a fiber bundle. 

Observe that, by \ref{prop:core_fibrado}, $p$ can not be obtained from a fiber bundle between finite T$_0$--spaces by successively removing beat points of that fiber bundle.
\end{ex}

\begin{lemma}\label{lemm:altura_1_con_maximo}
Let $E$ and $B$ be finite T$_0$--spaces and let $p\colon E\to B$ be a Grothendieck bifibration. Suppose, in addition, that $B$ has a maximum element $b_0$ and that $h(B)=1$ (that is, $B$ is homeomorphic to the non-Hausdorff cone of a discrete finite topological space). Then $p$ is a retract of the canonical projection $\pi_B\colon E\times B\to B$.
\end{lemma}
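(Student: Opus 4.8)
The plan is to construct explicit morphisms over $B$, namely $u\colon(E,p)\to(E\times B,\pi_B)$ and $v\colon(E\times B,\pi_B)\to(E,p)$, with $v u=\id_E$; this exhibits $p$ as a retract of $\pi_B$ as objects over $B$. First I record the structure of $B$: since $B$ has height $1$ and a maximum $b_0$, every $b\in B$ with $b\neq b_0$ is minimal and satisfies $b<b_0$, and these are the only nontrivial comparabilities in $B$; in particular $U_b=\{b\}$ for minimal $b$ and $F_{b_0}=\{b_0\}$. Let $\alpha,\beta$ be the functors attached to the Grothendieck bifibration $p$ by \ref{theo:p_alpha_beta}. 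I define $\mu\colon E\to E$ by $\mu(x)=\beta\big(p(x)\leq b_0\big)(x)=\min\big(F_x\cap p^{-1}(b_0)\big)$; it is well defined and takes values in $p^{-1}(b_0)$, satisfies $\mu(x)\geq x$, restricts to the identity on $p^{-1}(b_0)$, and is order preserving on all of $E$ because $x\leq x'$ gives $F_{x'}\cap p^{-1}(b_0)\subseteq F_x\cap p^{-1}(b_0)$. For each minimal $b\in B$ I also set $\rho_b=\alpha(b_0\geq b)\colon p^{-1}(b_0)\to p^{-1}(b)$, which is order preserving and satisfies $\rho_b(z)=\max\big(U_z\cap p^{-1}(b)\big)\leq z$.

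I then take $u(e)=(e,p(e))$, which is order preserving with $\pi_B u=p$, and define
\[
v(x,b)=\begin{cases} x & \text{if } p(x)=b,\\ \mu(x) & \text{if } b=b_0,\\ \rho_b\big(\mu(x)\big) & \text{if $b$ is minimal and } p(x)\neq b.\end{cases}
\]
The cases are exhaustive and overlap only when $p(x)=b=b_0$, in which case the first two agree (both equal $\mu(x)=x$), so $v$ is well defined; also $p\,v(x,b)=b$ in every case, so $v$ is a morphism over $B$, and $v u(e)=v(e,p(e))=e$. Everything up to here is immediate; the substance of the proof is the continuity of $v$, which — all the spaces being finite — is the same as $v$ being order preserving.

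To prove the latter I would take $(x,b)\leq(x',b')$ in $E\times B$, i.e.\ $x\leq x'$ and $b\leq b'$, and show $v(x,b)\leq v(x',b')$, distinguishing $b=b'$ from $b<b'=b_0$ (the only possibilities, by the structure of $B$). On the slice $E\times\{b_0\}$ one has $v=\mu\circ\mathrm{pr}_E$, which is order preserving. On a slice $E\times\{b\}$ with $b$ minimal, $p^{-1}(b)$ is open in $E$, hence down-closed, so $v$ coincides with $\mathrm{pr}_E$ on the down-set $p^{-1}(b)\times\{b\}$ and with $\rho_b\circ\mu\circ\mathrm{pr}_E$ on its complement, and a comparable pair in this slice lies inside one of the two pieces unless $x\in p^{-1}(b)$ and $x'\notin p^{-1}(b)$; in that remaining case $x\leq x'\leq\mu(x')$ together with $p(x)=b$ give $x\in U_{\mu(x')}\cap p^{-1}(b)$, whence $x\leq\rho_b(\mu(x'))=v(x',b)$, while within each piece monotonicity comes from that of $\mathrm{pr}_E$, $\mu$ and $\rho_b$. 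Finally, for $b<b'=b_0$ we have $v(x',b_0)=\mu(x')$, and $v(x,b)=x\leq x'\leq\mu(x')$ if $p(x)=b$, whereas $v(x,b)=\rho_b(\mu(x))\leq\mu(x)\leq\mu(x')$ if $p(x)\neq b$, using $\rho_b(z)\leq z$ and monotonicity of $\mu$. I expect this bookkeeping-heavy verification of monotonicity to be the only real difficulty; it goes through because in a height-one poset with a top element the only route between two elements runs through $b_0$, which is exactly what lets the single map $\mu$ — ``push everything up into $p^{-1}(b_0)$'' — mediate all of $v$. Once $v$ is seen to be order preserving, $u$ and $v$ are the desired morphisms over $B$ with $v u=\id_E$, so $p$ is a retract of $\pi_B$.
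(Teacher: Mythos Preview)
Your proof is correct, and the section $u$ and retraction $v$ you construct coincide literally with the composites the paper uses: your $u$ is the paper's $i_X\circ i$ (with $i(e)=(e,p(e))$), and your $v$ is the paper's $\rho\circ r_X$, where $r_X(e,b)$ pushes $e$ up to $\mu(e)\in p^{-1}(b_0)$ whenever $p(e)\neq b$ and $\rho(e,b)=\max(U_e\cap p^{-1}(b))$ pulls back down. The only genuine difference is organisational: the paper factors through the intermediate subspace $X=\bigcup_{b\in B}p^{-1}(F_b)\times\{b\}$ and shows, using \ref{theo:dbpr_equivalences_sobre_B} and \ref{prop:f_leq_g_g_cont}, that $\pi_B|_X$ is an ubp--retract of $\pi_B$ and that $p$ is a dbp--retract of $\pi_B|_X$, whereas you establish monotonicity of the composite $v$ by a direct case analysis. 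Your route is more elementary and entirely adequate for the lemma as stated; the paper's two–step factorisation additionally records that the retraction can be realised by first removing up beat points and then down beat points, information that ties into the paper's later discussion (e.g.\ in \S\ref{subs:no_retracto_de_proy}) of which maps arise from fiber bundles by beat--point removal.
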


\begin{proof}
The idea of the proof is the following. First observe that the fiber $p^{-1}(b_0)$ is an ubp--retract of $E$ and hence, we may retract $E\times B$ to $E\times \widehat{U}_{b_0}\cup p^{-1}(b_0)\times \{b_0\}$ by successively removing up beat points of the projection to $B$. After that, we may retract each subspace $E\times \{b\}$ with $b<b_0$ to $p^{-1}(F_b)\times\{b\}$, removing up beat points again. These two steps can be done simultaneously, obtaining an ubp--retract of $\pi_B$. Finally, we may retract each subspace $p^{-1}(F_b)\times\{b\}$ with $b<b_0$ to $p^{-1}(b)\times \{b\}$ removing down beat points. The space obtained is homeomorphic over $B$ to $E$, and thus it follows that $p$ is a retract of $\pi_B$. In what follows, we will state formally and prove these facts.

Let $X=\bigcup\limits_{b\in B} p^{-1}(F_b)\times \{b\}$, let $i_X\colon X\to E\times B$ be the inclusion map and let $r_X\colon E\times B\to X$ be the map defined by
\[r_X(e,b)=\left\{
\begin{array}{ll}
(e,b)&\text{if $p(e)=b$,}\\
(\min (F_e\cap p^{-1}(b_0)),b)&\text{if $p(e)\neq b$.}
\end{array}
\right.
\] 

It is easy to see that $r_X$ is well defined, that $\pi_B i_X r_X=\pi_B$, that $r_X i_X=\id_X$ and that $i_X r_X\geq \id_{E\times B}$. Hence, if we prove that $r_X$ is continuous, from the dual version for ubp--retracts of \ref{theo:dbpr_equivalences_sobre_B} it will follow that $\pi_B i_X$ is an ubp--retract of $\pi_B$. And since $i_X$ is a subspace map, it suffices to prove that $i_Xr_X$ is continuous. Moreover, since $\pi_B i_X r_X=\pi_B$, it suffices to prove that $\phi=\pi_E i_X r_X$ is continuous, where $\pi_E\colon E\times B\to E$ is the canonical projection.

Note that $\phi\geq \pi_E$. Let $U=\bigcup\limits_{b\in \mnl B}(p^{-1}(b)\times \{b\})$. Clearly $U$ is an open subset of $E\times B$ and $\phi$ coincides with $\pi_E$ in $U$. On the other hand, the restriction of $\phi$ to $U^{c}$ is the map $\phi|$ defined by
\[\phi|(e,b)=\min (F_e \cap p^{-1}(b_0))\]
for all $(e,b)\in U^{c}$. Then, it is easy to prove that $\phi|$ is continuous. From \ref{prop:f_leq_g_g_cont}, it follows that $\phi$ is continuous as desired. Thus, $X$ is an ubp--retract of $E\times B$ and $\pi_B i_X$ is an ubp--retract of $\pi_B$.

Let $i\colon E\to X$ be the map defined by $i(e)=(e,p(e))$ and let $\rho\colon X\to E$ be the map defined by $\rho(e,b)=\max (U_e\cap p^{-1}(b))$ for all $(e,b)\in X$. It is clear that $i$ is continuous, and since $\pi_E i_X i=\id_E$, it follows that $i$ is a subspace map.

We will prove now that $\rho$ is continuous. Let $(e,b),(e',b')\in X$ be such that $(e,b)\leq (e',b')$. Then $U_e\subseteq U_{e'}$ and $U_b\subseteq U_{b'}$. It follows that
\begin{align*}
\rho(e,b)&=\max(U_e\cap p^{-1}(b))=\max(U_e\cap p^{-1}(U_b))\leq\\
&\leq \max (U_{e'}\cap p^{-1}(U_{b'}))=\max(U_{e'}\cap p^{-1}(b'))=\rho(e',b')
\end{align*}
where the second and third equalities hold by \ref{lemm:fib_groth_entre_posets}. Thus, $\rho$ is continuous.

On the other hand, it is easy to see that $\pi_B i_X i=p$, that $p\rho=\pi_B i_X$, that $\rho i=\id_E$ and that $i\rho\leq \id_X$.
It follows that $E$ is a dbp--retract of $X$ and that $p=\pi_B i_X i$ is a dbp--retract of $\pi_B i_X$. In particular, $p$ is a retract of $\pi_B$ as we wanted to prove.
\end{proof}

In  order to stablish a relationship between Hurewicz fibrations between finite T$_0$--spaces and Grothendieck fibrations, we will need to study the regularity of path lifting maps associated to Hurewicz fibrations and its relationship with the existence of beat points.

\begin{theo}\label{theo:fib_sin_dbp_regular}
Let $E$ and $B$ be finite T$_0$--spaces and let $p\colon E\to B$ be a fibration. Then:
\begin{enumerate}
\item If $p$ is minimal, all path-lifting maps for $p$ are regular.
\item If $p$ does not have down beat points, then there exists a regular path-lifting map for $p$.
\end{enumerate}
\end{theo}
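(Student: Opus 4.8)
\emph{Proposal.} Recall that a path-lifting map $\Lambda\colon E\times_p B^{I}\to E^{I}$ for $p$ is \emph{regular} if $\Lambda(e,C_{p(e)})=C_e$ for every $e\in E$, where $C_x$ denotes the constant path at $x$. The two items call for different arguments: item (1) is a rigidity statement deduced from \ref{coro:p_mnl_entonces_h_es_id}, whereas item (2) will be obtained by modifying an arbitrary path-lifting map, the absence of down beat points being exactly what makes the modification work without destroying regularity.

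For (1), let $p$ be minimal and let $\Lambda$ be any path-lifting map for $p$. The assignment $e\mapsto (e,C_{p(e)})$ is a continuous map $E\to E\times_p B^{I}$ (its second coordinate is $p$ followed by the continuous map $B\to B^{I}$, $b\mapsto C_b$), so $\sigma\colon E\to E^{I}$ defined by $\sigma(e)=\Lambda(e,C_{p(e)})$ is continuous, and from $\ev_0\Lambda=\pi_E$ and $p^{I}\Lambda=\pi_{B^{I}}$ we get $\ev_0\sigma=\id_E$ and $p^{I}\sigma(e)=C_{p(e)}$. Since $I$ is compact Hausdorff, the adjoint $H\colon E\times I\to E$ of $\sigma$ is continuous; it satisfies $H i_0=\id_E$ and $pH(e,t)=p(e)$ for every $t$, so each $H(-,t)\colon E\to E$ is an arrow $p\to p$ over $B$, and reparametrizing the restriction of $H$ to $E\times [0,t]$ shows that $H(-,t)\simeq \id_p$ over $B$. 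By \ref{coro:p_mnl_entonces_h_es_id}, $H(-,t)=\id_E$ for all $t\in I$, i.e.\ $\sigma(e)=C_e$ for every $e$; thus $\Lambda$ is regular, and (1) is proved.

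For (2), assume $p$ is a fibration without down beat points, and let $p_C\colon E_C\to B$ be a core of $p$, with structure maps $i\colon E_C\to E$ and $r\colon E\to E_C$ over $B$ satisfying $ri=\id_{E_C}$ and $\id_E\simeq ir$ over $B$ relative to $E_C$. Then $p_C$ is a retract of $p$, hence a fibration, and being minimal it admits, by part (1), a regular path-lifting map $\Lambda_C$. The plan is to travel back from $p_C$ to $p$ along a chain $p_C=q_0,q_1,\dots,q_n=p$, where each $q_{k+1}$ is obtained from $q_k$ by re-adding a single beat point of $q_{k+1}$, and to promote, at each stage, a regular path-lifting map of $q_k$ to one of $q_{k+1}$. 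When the re-added point is an up beat point of $q_{k+1}$ one uses the construction dual to the one in the proof of \ref{theo:fib_al_sacar_dbp}; when it is a down beat point $e$ one uses the construction of that proof, but with a correction near the constant paths, since the bare construction of \ref{theo:fib_al_sacar_dbp} sends a constant path $C_b$ (with $b=p(e)$) to the lift that equals $e$ at time $0$ and $\max(\widehat{U}_e)$ afterwards, which is not constant. In carrying this out the reparametrizations $\gamma_{[t_0,t_1]}$ of \ref{defi:gamma_[0,t]} and the elementary paths $\eta(x\le y)$, $\eta(y\ge x)$ of \ref{defi:eta} are the natural bookkeeping devices, and the absence of down beat points of $p$ enters, via the pasting lemma \ref{prop:f_leq_g_g_cont} together with \ref{prop:sin_dbp_comparable_con_id_implica_id}, both to guarantee that the corrected formulas still define continuous lifting maps and to make the correction trivial over the subspace of constant paths.

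The main obstacle is precisely this last point: arranging the promotion step so that regularity survives the re-addition of a down beat point. Item (1), the reduction to the minimal core, and the routine continuity verifications pose no real difficulty; the delicate work — and the only place the hypothesis ``without down beat points'' is genuinely used — is to choose the reparametrized lift so that it collapses to the constant lift exactly on constant base paths, which is false for the naive construction and is what distinguishes this situation from the one in \ref{theo:fib_al_sacar_dbp}.
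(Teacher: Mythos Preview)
Your argument for item (1) is correct and is essentially the paper's proof: form the homotopy $H(e,t)=\Lambda(e,C_{p(e)})(t)$, note that each $H(-,t)$ is a self-map of $p$ over $B$ homotopic to $\id_p$, and apply \ref{coro:p_mnl_entonces_h_es_id}.

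For item (2), however, your plan diverges from the paper and has a genuine gap. The paper does not pass through the core at all. It takes an arbitrary path-lifting map $\Lambda$, forms the same homotopy $\tilde H(e,t)=\Lambda(e,C_{p(e)})(t)$, and uses finiteness of $E$ to find a single $\varepsilon>0$ with $\tilde H i_t\le \id_E$ for all $t\in[0,\varepsilon]$; since $p$ has no down beat points, \ref{prop:sin_dbp_comparable_con_id_implica_id} forces $\tilde H i_t=\id_E$ on $[0,\varepsilon]$, so $\Lambda$ already lifts constant paths to paths constant on $[0,\varepsilon]$. A single reparametrization $\Lambda_0(e,\gamma)(t)=\Lambda(e,\tilde\gamma)(t\varepsilon)$, with $\tilde\gamma(s)=\gamma(\min\{1,s/\varepsilon\})$, then yields a regular path-lifting map. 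No induction, no intermediate maps.

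The gap in your inductive plan is the up beat step, which you treat as routine (``the construction dual to the one in the proof of \ref{theo:fib_al_sacar_dbp}''). That duality fails here. In \ref{theo:fib_al_sacar_dbp} one has $\lambda'\ge(\text{continuous map})$ with equality off the \emph{closed} set $\{t=0\}$, and \ref{prop:f_leq_g_g_cont} applies. For an up beat point one gets instead $\lambda'\le(\text{continuous map})$, and the relevant pasting lemma \ref{prop:f_leq_g_f_cont} requires the discrepancy to live on an \emph{open} set; the set $\{t=0\}$ is closed, so the argument does not go through. This is the same asymmetry that makes the up beat analogue of \ref{theo:fib_al_sacar_dbp} false (see subsection \ref{subs:fib_no_cerrada}). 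Furthermore, your claim that ``the absence of down beat points of $p$'' is what saves the down beat promotion step is not justified: the intermediate maps $q_{k+1}$ can very well have down beat points (removing an up beat point may create one), so \ref{prop:sin_dbp_comparable_con_id_implica_id} is not available for $q_{k+1}$; you would need to explain how the hypothesis on $p$ itself propagates to the intermediate stages, and your proposal does not do this.
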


\begin{proof}
For each $t\in I$, let $i_t\colon E\to E\times I$ be defined by $e\mapsto (e,t)$. Let $\pi_E\colon E\times I \to E$ be the canonical projection. 

First we will prove $(1)$. Suppose that $p$ is a minimal map and that $\Lambda\colon E\times_p B^{I}\to E^{I}$ is a path-lifting map for $p$.
Then $p\pi_E i_0=p=p\id_E$ and hence we may apply $\Lambda$ to lift $p\pi_E$ from $\id_E$, obtaining a continuous map $\tilde{H}\colon E\times I\to E$ such that $\tilde{H}i_0=\id_E$ and $p\tilde{H}=p\pi_E$. It is not difficult to verify that $\widetilde{H}$ is equal to $(\Lambda\phi)^{\flat}$, the map induced by $\Lambda\phi$ by the exponential law, where $\phi\colon E\to E\times_p B^{I}$ is the map induced in the pullback by the maps $\id_E$ and $(p\pi_E)^{\sharp}$. Explicitly, $\widetilde{H}(e,t)=\Lambda(e,C_{p(e)})(t)$ for all $e\in E$ and $t\in I$. 
Hence, it suffices to prove that $\widetilde{H}(e,t)=e$ for every $e\in E$ and $t\in I$.
 
Note that, in particular, $p\tilde{H}i_t=p\pi_E i_t=p$ for all $t\in I$. Hence, $\tilde{H}$ is a homotopy over $B$.
Then $\tilde{H}i_t$ is homotopic to $\id_p$ over $B$ for all $t\in I$. Since $p$ is minimal, it follows from \ref{coro:p_mnl_entonces_h_es_id} that $\tilde{H}i_t=\id_p$. Hence $\tilde{H}(e,t)=e$ for all $e\in E$ and all $t\in I$ as desired. The result follows.

Now we will prove $(2)$. Suppose that $p$ does not have down beat points and that $\Lambda\colon E\times_p B^{I}\to E^{I}$ is a path-lifting map for $p$. As we have done previously, we use the map $\Lambda$ to lift $p\pi_E$ from $\id_E$, obtaining again a homotopy $\tilde{H}\colon E\times I\to E$ such that $\tilde{H}i_0=\id_E$ and $p\tilde{H}=p\pi_E$. For each $e\in E$, $\tilde{H}(e,0)=e\in U_e$ and hence there exists $\varepsilon_e>0$ such that $\{e\}\times [0,\varepsilon_e]\subseteq \tilde{H}^{-1}(U_e)$. Let $\varepsilon=\min\{\varepsilon_e:e\in E\}>0$. Then $\tilde{H}i_t\leq \id_E$ for all $t\in[0,\varepsilon]$. Since $p$ does not have down beat points, it follows from \ref{prop:sin_dbp_comparable_con_id_implica_id} that $\tilde{H}i_t=\id_E$ for all $t\in[0,\varepsilon]$. In particular, $\Lambda$ lifts constant paths to paths that are constant in the interval $[0,\varepsilon]$.

Given $\gamma\colon I\to B$, we define $\tilde{\gamma}\colon I\to B$ by $\tilde{\gamma}(t)=\gamma(\min\{1,t/\varepsilon\})$.
Note that the assignment  $t\mapsto \min\{1,t/\varepsilon\}$ from $I$ to $I$ is continuous. Applying the exponential law it is easy to prove that the map $\Lambda_0\colon E\times_p B^{I}\to E^{I}$ defined by
\[\Lambda_0(e,\gamma)(t)=\Lambda(e,\tilde{\gamma})(t\varepsilon)\] 
is continuous. A direct computation shows that $\Lambda_0$ is a path-lifting map for $p$.

Now, if $\gamma$ is a constant path in $B$ and $e\in p^{-1}(\gamma(0))$, then $\tilde{\gamma}$ is a constant path and hence $\Lambda(e,\tilde{\gamma})(t)=e$ for all $t\in[0,\varepsilon]$. It follows that $\Lambda_0(e,\gamma)(t)=e$ for all $t\in I$. Hence, $\Lambda_0$ is a regular path-lifting map for $p$.
\end{proof}

\begin{ex}
In this example we will exhibit a Hurewicz fibration between finite T$_0$--spaces that does not have down beat points and that admits a path-lifting map which is not regular. This shows that a fibration without down beat points can have non-regular path-lifting maps.

Let $E$ be the finite T$_0$--space whose underlying set is $\{a,b,c\}$ and whose topology is $\{\varnothing,\{b\},\{c\},\{b,c\},\{a,b,c\}\}$. Let $B=\{*\}$ be the singleton and let $p\colon E\to B$ the only possible map. Note that $p$ is a Hurewicz fibration which does not have down beat points.

Let $\Lambda\colon E\times_p B^{I}\to E^{I}$ be defined by
\[
\Lambda(e,\gamma)(t)=\left\{
\begin{array}{ll}
e&\text{if $t<1$,}\\
a&\text{if $t=1$,}
\end{array}
\right.
\]
for all $e\in E$ and all $t\in I$, where $\gamma$ is the only possible path in $B$. It is easy to verify that $\Lambda$ is a path-lifting map for $p$ which is not regular.
\end{ex}

In subsection \ref{subs:fib_no_cerrada} we will exhibit a Hurewicz fibration that does not have up beat points and that does not have regular path-lifting maps.

Recall that if $\gamma$ is a path in a topological space $X$ and $t\in I$, the map $\gamma_{[0,t]}$ is the path in $X$ defined by $\gamma_{[0,t]}(s)=\gamma(st)$ for all $s\in I$ (definition \ref{defi:gamma_[0,t]}).

\begin{definition}
Let $E$ and $B$ be topological spaces and let $p\colon E\to B$ be a fibration. Let $\Lambda$ be a regular path-lifting map for $p$. We say that $\Lambda$ is a \emph{normalized} regular path-lifting map if $\Lambda(e,\gamma)_{[0,t]}=\Lambda(e,\gamma_{[0,t]})$ for all $t\in I$ and for all $(e,\gamma)\in E\times_p B^{I}$.
\end{definition}

\begin{lemma}\label{lemm:fib_reg_N} 
Let $E$ and $B$ be topological spaces and let $p\colon E\to B$ be a fibration which admits a regular path-lifting map $\Lambda$. Then there exists a normalized regular path-lifting map $N(\Lambda)$ for $p$.
\end{lemma}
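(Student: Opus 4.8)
The plan is to construct $N(\Lambda)$ by precomposing $\Lambda$ with a reparametrization that is applied path-by-path in a coherent way. Given $(e,\gamma)\in E\times_p B^I$, one would like $N(\Lambda)(e,\gamma)$ to be a lift of $\gamma$ starting at $e$ that satisfies the self-similarity condition $N(\Lambda)(e,\gamma)_{[0,t]}=N(\Lambda)(e,\gamma_{[0,t]})$. The standard device (going back to Hurewicz) is to define
\[
N(\Lambda)(e,\gamma)(t)=\Lambda\bigl(e,\gamma^{(t)}\bigr)(t),
\]
where $\gamma^{(t)}$ is a suitable path depending on $t$; a convenient choice is $\gamma^{(t)}=\gamma_{[0,t]}$ stretched out, i.e.\ the path $s\mapsto\gamma(\min\{s,t\})$ (which agrees with $\gamma$ on $[0,t]$ and is constant afterwards). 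Regularity of $\Lambda$ is exactly what makes this work: since $\gamma^{(t)}$ is eventually constant, the tail of $\Lambda(e,\gamma^{(t)})$ beyond $t$ does not interfere, and at $t=0$ the path $\gamma^{(0)}$ is constant, so $\Lambda(e,\gamma^{(0)})(0)=e$, giving $N(\Lambda)(e,\gamma)(0)=e$.

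First I would verify that $N(\Lambda)$ is a path-lifting map: that $\ev_0 N(\Lambda)=\pi_E$ (immediate from the $t=0$ computation above, using regularity of $\Lambda$) and that $p^I N(\Lambda)=\pi_{B^I}$, i.e.\ $p\,N(\Lambda)(e,\gamma)(t)=\gamma(t)$. For the latter, $p\Lambda(e,\gamma^{(t)})(t)=\gamma^{(t)}(t)=\gamma(t)$ since $\gamma^{(t)}$ agrees with $\gamma$ at the parameter $t$. Next I would check regularity of $N(\Lambda)$: if $\gamma$ is constant then every $\gamma^{(t)}$ is the same constant path, so $N(\Lambda)(e,\gamma)(t)=\Lambda(e,\gamma^{(t)})(t)=e$ by regularity of $\Lambda$. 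Then comes the normalization identity itself: for fixed $t_0\in I$ and $s\in I$ one computes
\[
N(\Lambda)(e,\gamma)_{[0,t_0]}(s)=N(\Lambda)(e,\gamma)(st_0)=\Lambda\bigl(e,\gamma^{(st_0)}\bigr)(st_0),
\]
and on the other side $N(\Lambda)(e,\gamma_{[0,t_0]})(s)=\Lambda\bigl(e,(\gamma_{[0,t_0]})^{(s)}\bigr)(st_0\cdot\text{(scaling)})$; here one must be careful with how $\gamma_{[0,t_0]}$ rescales the parameter. The key reparametrization identity to establish is that $(\gamma_{[0,t_0]})^{(s)}$ and $\gamma^{(st_0)}$, when fed into $\Lambda$ and evaluated at the matching parameters, produce the same point — this reduces to the fact that both paths agree with $\gamma$ (up to an affine change of variable) on the relevant initial segment, combined again with regularity to discard the constant tails.

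The main obstacle I anticipate is twofold: (i) getting the parameter bookkeeping exactly right so that the affine reparametrizations match on the nose — in particular ensuring $\gamma^{(t)}$ is jointly continuous in $(t,\gamma)$ as a map $I\times B^I\to B^I$, which is needed for $N(\Lambda)$ to be continuous (this uses the exponential law and continuity of $(s,t)\mapsto\min\{s,t\}$, much as in the proof of Theorem~\ref{theo:fib_sin_dbp_regular}(2)); and (ii) verifying the self-similarity identity, where the two sides are built from $\Lambda$ applied to superficially different paths, and one must invoke regularity of $\Lambda$ precisely at the point where the tails differ. Everything else — that $N(\Lambda)$ lands in $E^I$, that the defining formula assembles into a continuous map on $E\times_p B^I$ — is routine once the reparametrization map is shown continuous. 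I would organize the write-up as: define $\gamma^{(t)}$ and check its continuity in $(t,\gamma)$; define $N(\Lambda)$ via the exponential law and check continuity; verify the path-lifting-map axioms; verify regularity; verify normalization.
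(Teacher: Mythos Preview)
Your construction does produce a continuous regular path-lifting map, but it does \emph{not} satisfy the normalization identity in general, and the obstacle you flag in (ii) cannot be overcome with regularity alone. Concretely, take $B=I$, $E=I\times\mathbb{R}$, $p$ the first projection, and
\[
\Lambda\bigl((c,f),\gamma\bigr)(t)=\bigl(\gamma(t),\,f+t(\gamma(1)-c)\bigr).
\]
This is a regular path-lifting map. Your formula gives $N(\Lambda)\bigl((c,f),\gamma\bigr)(t)=\bigl(\gamma(t),\,f+t(\gamma(t)-c)\bigr)$, and then
\[
N(\Lambda)\bigl((c,f),\gamma\bigr)_{[0,t_0]}(s)=\bigl(\gamma(st_0),\,f+st_0(\gamma(st_0)-c)\bigr),
\]
whereas
\[
N(\Lambda)\bigl((c,f),\gamma_{[0,t_0]}\bigr)(s)=\bigl(\gamma(st_0),\,f+s(\gamma(st_0)-c)\bigr),
\]
which differ whenever $t_0\neq 1$ and $\gamma(st_0)\neq c$. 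The point is that regularity only controls $\Lambda$ on \emph{constant} paths; it says nothing about how $\Lambda$ interacts with affine reparametrizations, so ``discarding the constant tails'' does not force the two lifts to agree.

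The paper's construction avoids this by using the rescaled path $\gamma_{[0,t]}(s)=\gamma(st)$ (rather than the truncated path $\gamma^{(t)}$) and always evaluating at the fixed time $1$:
\[
N(\Lambda)(e,\gamma)(t)=\Lambda(e,\gamma_{[0,t]})(1).
\]
The normalization identity is then an immediate consequence of the exact equality of paths $(\gamma_{[0,t]})_{[0,s]}=\gamma_{[0,st]}$, with no appeal to regularity needed at that step. Regularity enters only to verify $N(\Lambda)(e,\gamma)(0)=\Lambda(e,C_{p(e)})(1)=e$. (Incidentally, in your version the verification of $N(\Lambda)(e,\gamma)(0)=e$ does not require regularity either: $\Lambda(e,\gamma^{(0)})(0)=e$ just because every lift starts at $e$.) Replacing $\gamma^{(t)}$ by $\gamma_{[0,t]}$ and the evaluation time $t$ by $1$ fixes your argument; the continuity checks you outlined go through with $(\gamma,t)\mapsto\gamma_{[0,t]}$ in place of $(\gamma,t)\mapsto\gamma^{(t)}$.
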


\begin{proof}
We define $N(\Lambda)\colon E\times_p B^{I}\to E^{I}$ by
\[N(\Lambda)(e,\gamma)(t)=\Lambda(e,\gamma_{[0,t]})(1)\] 
for all $t\in I$ and for all $(e,\gamma)\in E\times_p B^{I}$. The assignment $(\gamma,t,s)\mapsto \gamma(st)$ of $B^{I}\times I\times I$ in $B$ is clearly continuous, since it can be factored as the composition 
\[B^{I}\times I\times I\xrightarrow{\id_{B^{I}}\times\mu}B^{I}\times I\xrightarrow{\ev}B\]
where $\mu\colon I\times I\to I$ is the multiplication map. By the exponential law, the assignment $(\gamma,t)\mapsto \gamma_{[0,t]}$ from $B^{I}\times I$ to $B^{I}$ is also continuous.

It is not difficult to prove that this map induces a continuous map 
\[(E\times_p B^{I})\times I\to E\times_p B^{I}\]
which sends the element $(e,\gamma,t)\in (E\times_p B^{I})\times I$ to $(e,\gamma_{[0,t]})$. Composing this map with 
\[E\times_p B^{I}\xrightarrow{\Lambda}E^{I}\xrightarrow{\ev_1}E,\]
we conclude that $N(\Lambda)^{\flat}$ is continuous. Thus $N(\Lambda)$ is also a continuous map.

Since $\Lambda$ is regular and $\gamma_{[0,0]}=C_{\gamma(0)}$ for all $\gamma\in B^{I}$, it follows that
\[N(\Lambda)(e,\gamma)(0)=\Lambda(e,\gamma_{[0,0]})(1)=\Lambda(e,C_{\gamma(0)})(1)=C_e(1)=e\]
for all $(e,\gamma)\in E\times_p B^{I}$. On the other hand,
\[pN(\Lambda)(e,\gamma)(t)=p\Lambda(e,\gamma_{[0,t]})(1)=\gamma_{[0,t]}(1)=\gamma(t)\]
for all $(e,\gamma)\in E\times_p B^{I}$ and $t\in I$. Thus, $N(\Lambda)$ is a path-lifting map for $p$.

Now, since for every $\gamma\in B^{I}$ and $s,t\in I$, $\left(\gamma_{[0,t]}\right)_{[0,s]}=\gamma_{[0,st]}$, it follows that
\begin{align*}
N(\Lambda)(e,\gamma)_{[0,t]}(s)&=N(\Lambda)(e,\gamma)(st)=\Lambda(e,\gamma_{[0,st]})(1)=\\
&=\Lambda\left(e,\left(\gamma_{[0,t]}\right)_{[0,s]}\right)(1)=N(\Lambda)(e,\gamma_{[0,t]})(s)
\end{align*}
for every $(e,\gamma)\in E\times_p B^{I}$ and $s,t\in I$. Thus $N(\Lambda)(e,\gamma)_{[0,t]}=N(\Lambda)(e,\gamma_{[0,t]})$ for all $t\in I$ and for all $(e,\gamma)\in E\times_p B^{I}$ as desired.
\end{proof}

\begin{rem}
Under the hypotheses of the previous lemma, we may consider the assignment $\Lambda\mapsto N(\Lambda)$ as an operator in the set of regular path-lifting maps of $p$. In this case, it is easy to verify that $N^{2}=N$.
\end{rem}

\begin{definition}
Let $B$ be a finite T$_0$--space and let $\gamma$ be a path in $B$. We say that $\gamma$ is a path of \emph{type $D$} if it is constant in the interval $(0,1]$ and we say that $\gamma$ is a path of \emph{type $U$} if it is constant in the interval $[0,1)$.
\end{definition}

In informal terms, the paths of type $D$ ``go down'' in time $t=0$ and remain constant in the interval $(0,1]$, while the paths of type $U$ remain constant in the interval $[0,1)$ and then ``go up'' in time $t=1$. We make these claims precise in the following remark.

\begin{rem}\label{rema:tipo_caminos}
Let $B$ be a finite T$_0$--space and let $\gamma$ be a path in $B$. It is easy to see that
\begin{enumerate}
\item if $\gamma$ is of type $D$, then $\gamma(0)\geq \gamma(t)=\gamma(1)$ for all $t\in (0,1]$, and 
\item if $\gamma$ is of type $U$, then $\gamma(0)=\gamma(t)\leq\gamma(1)$ for all $t\in [0,1)$. 
\end{enumerate}
Equivalently, $\gamma$ is of type $D$ if $\gamma(0)\geq \gamma(1)$ and $\gamma=\eta(\gamma(0)\geq \gamma(1))$ (cf. definition \ref{defi:eta}) and $\gamma$ is of type $U$ if $\gamma(0)\leq\gamma(1)$ and $\gamma=\eta(\gamma(0)\leq\gamma(1))$.
\end{rem}

\begin{lemma}\label{lemm:tipo_caminos}
Let $E$ and $B$ be finite T$_0$--spaces, let $p\colon E\to B$ be a fibration and let $\Lambda$ be a normalized regular path-lifting map for $p$. 
Let $\gamma$ be a path in $B$ and let $e\in p^{-1}(\gamma(0))$.
\begin{enumerate}
\item If $\gamma$ is of type $D$, then $\Lambda(e,\gamma)$ is of type $D$.
\item If $\gamma$ is of type $U$, then $\Lambda(e,\gamma)$ is of type $U$.
\end{enumerate}
\end{lemma}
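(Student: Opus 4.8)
The plan is to exploit the normalization identity $\Lambda(e,\gamma)_{[0,t]}=\Lambda(e,\gamma_{[0,t]})$ together with the explicit descriptions of type $D$ and type $U$ paths recorded in Remark \ref{rema:tipo_caminos}, reducing everything to a computation with the reparametrizations $\gamma_{[0,t]}$, which by Definition \ref{defi:gamma_[0,t]} satisfy $\gamma_{[0,t]}(s)=\gamma(ts)$.

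For item $(1)$, I would first observe that if $\gamma$ is of type $D$, then $\gamma_{[0,t]}=\gamma$ for every $t\in(0,1]$: indeed, $\gamma_{[0,t]}(0)=\gamma(0)$, while for $s\in(0,1]$ we have $ts\in(0,1]$, so $\gamma(ts)=\gamma(1)=\gamma(s)$. Applying normalization then gives $\Lambda(e,\gamma)_{[0,t]}=\Lambda(e,\gamma_{[0,t]})=\Lambda(e,\gamma)$ for all $t\in(0,1]$, and evaluating both sides at $s=1$ yields $\Lambda(e,\gamma)(t)=\Lambda(e,\gamma)(1)$ for every $t\in(0,1]$. Thus $\Lambda(e,\gamma)$ is constant on $(0,1]$, i.e. of type $D$.

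For item $(2)$, the analogous first observation is that if $\gamma$ is of type $U$, then for every $t\in[0,1)$ the path $\gamma_{[0,t]}$ is the constant path $C_{\gamma(0)}$, since $ts\le t<1$ forces $\gamma(ts)=\gamma(0)$ for all $s\in I$. Because $e\in p^{-1}(\gamma(0))$ and $\Lambda$ is regular, this gives $\Lambda(e,\gamma_{[0,t]})=C_e$, and normalization then yields $\Lambda(e,\gamma)_{[0,t]}=C_e$ for all $t\in[0,1)$; evaluating at $s=1$ gives $\Lambda(e,\gamma)(t)=e$ for all $t\in[0,1)$, so $\Lambda(e,\gamma)$ is constant on $[0,1)$, i.e. of type $U$. (Note that item $(1)$ uses only normalization, whereas item $(2)$ also uses regularity, to identify the lift of the constant path.)

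I do not expect any real obstacle here: the entire argument is short, and the only points requiring mild care are the bookkeeping with $\gamma_{[0,t]}(s)=\gamma(ts)$ and the handling of the endpoint parameters $s=0$ and $t\in\{0,1\}$, all of which are routine. The whole conceptual content lies in the two elementary observations about $\gamma_{[0,t]}$ for type $D$ and type $U$ paths, which are immediate from the definitions.
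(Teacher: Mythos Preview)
Your proof is correct and follows essentially the same route as the paper: both arguments hinge on the identities $\gamma_{[0,t]}=\gamma$ for $t\in(0,1]$ in the type~$D$ case and $\gamma_{[0,t]}=C_{\gamma(0)}$ for $t\in[0,1)$ in the type~$U$ case, then apply normalization (and regularity for $(2)$) and evaluate at $s=1$. Your added remark that $(1)$ uses only normalization while $(2)$ also needs regularity is a nice observation not made explicit in the paper.
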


\begin{proof}
Suppose that $\gamma$ is of type $D$. Then $\gamma=\gamma_{[0,t]}$ for all $t\in (0,1]$ and hence 
\[\Lambda(e,\gamma)(t)=\Lambda(e,\gamma)_{[0,t]}(1)=\Lambda(e,\gamma_{[0,t]})(1)=\Lambda(e,\gamma)(1)\]
for all $t\in (0,1]$. Hence, $\Lambda(e,\gamma)$ is of type $D$.

Now suppose that $\gamma$ is of type $U$. Then $\gamma_{[0,t]}=C_{\gamma(0)}$ for all $t\in [0,1)$. It follows that 
\[\Lambda(e,\gamma)(t)=\Lambda(e,\gamma)_{[0,t]}(1)=\Lambda(e,\gamma_{[0,t]})(1)=\Lambda(e,C_{\gamma(0)})(1)=C_e(1)=e\]
for all $t\in [0,1)$. Hence, $\Lambda(e,\gamma)$ is of type $U$.
\end{proof}

The following theorem is the principal result of this section.

\begin{theo}\label{theo:fib_reg_es_bif}
Let $E$ and $B$ be finite T$_0$--spaces and let $p\colon E\to B$ be a Hurewicz fibration which admits a regular path-lifting map. Then $p$ is a Grothendieck bifibration.
\end{theo}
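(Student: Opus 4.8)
The plan is to check the two combinatorial criteria isolated in Lemma~\ref{lemm:fib_groth_entre_posets} and Remark~\ref{rema:cleavage_for_opfibration}: that for every $e\in E$ and every $b\le p(e)$ the set $U_e\cap p^{-1}(U_b)$ has a maximum lying in $p^{-1}(b)$, and dually that for every $e\in E$ and every $b\ge p(e)$ the set $F_e\cap p^{-1}(F_b)$ has a minimum lying in $p^{-1}(b)$. By Lemma~\ref{lemm:fib_reg_N} we may assume throughout that $\Lambda$ is a normalized regular path-lifting map for $p$.

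I would treat the opfibration part first. Fix $e\in E$ and $b\ge p(e)$, let $\gamma=\eta(p(e)\le b)$ (a path of type $U$) and set $e'=\Lambda(e,\gamma)(1)$. By Lemma~\ref{lemm:tipo_caminos} the lift $\Lambda(e,\gamma)$ is again of type $U$, so Remark~\ref{rema:tipo_caminos} gives $e=\Lambda(e,\gamma)(0)\le\Lambda(e,\gamma)(1)=e'$, while $p(e')=\gamma(1)=b$; hence $e'\in F_e\cap p^{-1}(b)$. It remains to see that $e'$ is a lower bound for $F_e\cap p^{-1}(F_b)$. Given $e''$ with $e\le e''$ and $b\le p(e'')$, consider the Sierpinski space $\S=\{0<1\}$, the map $f'\colon\S\to E$ with $f'(0)=e$, $f'(1)=e''$ (continuous since $e\le e''$), and the homotopy $H\colon\S\times I\to B$ whose $0$-row is $\gamma$ and whose $1$-row is the constant path at $p(e'')$. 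One checks that $H$ is continuous via Proposition~\ref{prop:f_leq_g_g_cont}: $H$ dominates the continuous map $(s,t)\mapsto pf'(s)$, agrees with it off the closed set $\S\times\{1\}$, and its restriction to $\S\times\{1\}$ is continuous because $b\le p(e'')$. Since $pf'=Hi_0$, feeding $H$ and $f'$ through $\Lambda$ and the exponential law (exactly as in the proof of Theorem~\ref{theo:Ue_cap_fibra_b}) produces a \emph{continuous} map $\widetilde H\colon\S\times I\to E$ with $p\widetilde H=H$, $\widetilde Hi_0=f'$ and $\widetilde H(s,t)=\Lambda(f'(s),H(s,\cdot))(t)$; in particular $\widetilde H(0,\cdot)=\Lambda(e,\gamma)$ and $\widetilde H(1,\cdot)=\Lambda(e'',C_{p(e'')})=C_{e''}$, the last equality by regularity. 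Since $\widetilde H$ is continuous and $(0,1)\le(1,1)$ in $\S\times I$, we get $e'=\widetilde H(0,1)\le\widetilde H(1,1)=e''$. Thus $e'=\min\bigl(F_e\cap p^{-1}(F_b)\bigr)$ and $e'\in p^{-1}(b)$, so $p$ is a Grothendieck opfibration by Remark~\ref{rema:cleavage_for_opfibration}.

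The fibration part is dual. Fix $e\in E$ and $b\le p(e)$, let $\gamma'=\eta(p(e)\ge b)$ (a path of type $D$) and set $e'=\Lambda(e,\gamma')(1)$; Lemma~\ref{lemm:tipo_caminos} together with Remark~\ref{rema:tipo_caminos} yields $e'\le e$ and $p(e')=b$, so $e'\in U_e\cap p^{-1}(b)\subseteq U_e\cap p^{-1}(U_b)$. For $e''\in U_e\cap p^{-1}(U_b)$, i.e.\ $e''\le e$ and $p(e'')\le b$, I would take $f'\colon\S\to E$ with $f'(0)=e''$, $f'(1)=e$ and the homotopy $H\colon\S\times I\to B$ with $0$-row the constant path at $p(e'')$ and $1$-row $\gamma'$. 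Again $H$ is continuous by Proposition~\ref{prop:f_leq_g_g_cont}, comparing it with the continuous map which is $p(e'')$ on $\{0\}\times I$ and $b$ on $\{1\}\times I$ (continuous because $p(e'')\le b$), which lies below $H$ and agrees with it off the closed point $(1,0)$. The same exponential-law construction gives a continuous $\widetilde H$ with $\widetilde H(0,\cdot)=\Lambda(e'',C_{p(e'')})=C_{e''}$ and $\widetilde H(1,\cdot)=\Lambda(e,\gamma')$, whence $e''=\widetilde H(0,1)\le\widetilde H(1,1)=e'$. Therefore $e'=\max\bigl(U_e\cap p^{-1}(U_b)\bigr)$ and $e'\in p^{-1}(b)$, so $p$ is a Grothendieck fibration by Lemma~\ref{lemm:fib_groth_entre_posets}. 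Being simultaneously a Grothendieck fibration and a Grothendieck opfibration, $p$ is a Grothendieck bifibration.

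I expect the main work to be the two continuity verifications: choosing the auxiliary homotopies $H$ so that each sits above (resp.\ below) an obvious continuous map and differs from it only on a closed subset of $\S\times I$ on which $H$ is visibly continuous — which is precisely what lets Proposition~\ref{prop:f_leq_g_g_cont} apply — and transporting the lift produced by $\Lambda$ across the exponential law so that $\widetilde H$ is genuinely continuous on the non-Alexandroff space $\S\times I$. Once these are in place the conclusion is forced: $\widetilde H$, being continuous into an Alexandroff space, is monotone for the specialization order, and evaluating the comparison $(0,1)\le(1,1)$ delivers the required extremality of $e'$, while Lemma~\ref{lemm:tipo_caminos} is exactly what guarantees that $e'$ itself sits below (resp.\ above) $e$ and projects to $b$.
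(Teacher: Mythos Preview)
Your proof is correct and follows essentially the same strategy as the paper: normalize $\Lambda$, use Lemma~\ref{lemm:tipo_caminos} to see that the lift of $\eta(p(e)\geq b)$ (resp.\ $\eta(p(e)\leq b)$) is itself of type $D$ (resp.\ $U$) and so lands at some $e'$ comparable to $e$ in the fiber over $b$, and then exploit regularity so that the constant paths over the comparison points lift to constant paths, whence continuity of the global lift forces $e''\leq e'$ (resp.\ $e'\leq e''$). The only difference is the choice of test space: the paper lifts a single homotopy with domain $X=\{e\}\cup(U_e\cap p^{-1}(U_b))$ (resp.\ $\{e\}\cup(F_e\cap p^{-1}(F_b))$), handling all comparison points at once, whereas you take the Sierpinski space $\S$ and run the argument for each $e''$ separately; both routes rely on the same ingredients and your continuity checks via Proposition~\ref{prop:f_leq_g_g_cont} are sound.
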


\begin{proof}
We have to prove that
\begin{enumerate}
\item for all $e\in E$ and for all $b\leq p(e)$, the set $U_e\cap p^{-1}(U_b)$ has a maximum element which belongs to $p^{-1}(b)$, and
\item for all $e\in E$ and for all $b\geq p(e)$, the set $F_e\cap p^{-1}(F_b)$ has a minimum element which belongs to $p^{-1}(b)$.
\end{enumerate}
By \ref{lemm:fib_reg_N}, we may consider a normalized regular path-lifting map $\Lambda$ for $p$. 

First we will prove $(1)$. Let $e\in E$ and let $b\leq p(e)$. Note that if $p(e)=b$ the result follows trivially since in that case $e\in p^{-1}(b)$ and $e=\max (U_e\cap p^{-1}(U_{b}))$. Then, we may suppose that $b<p(e)$.

Let $X=\{e\}\cup (U_e \cap p^{-1}(U_b))$ and let $i\colon X\to E$ be the inclusion map. Let $f\colon X\to B$ be the map defined by 
\[
f(x)=\left\{
\begin{array}{ll}
p(x)&\text{if $x\neq e$,}\\
b&\text{if $x=e$.}
\end{array}
\right.
\]
It is easy to see that $f$ is continuous and that $f\leq pi$ in $B^{X}$. The path $\eta(pi\geq f)$ induces a homotopy $H\colon X\times I\to B$ from $pi$ to $f$ (cf. definition \ref{defi:eta}). Since $Hi_0=pi$, we may use the map $\Lambda$ to obtain a lift $\widetilde{H}$ of $H$ by $p$ from $i$, which is the map $\tilde{H}=(\Lambda\phi)^{\flat}$ induced by $\Lambda\phi$ by the exponential law, where $\phi\colon X\to E\times_p B^{I}$ is the induced map in the pullback by $i$ and $H^{\sharp}\colon X\to B^{I}$.

Note that, for all $x\in U_e\cap p^{-1}(U_b)$, the path $Hi_x$ is the constant path $C_{p(x)}$. By the regularity of $\Lambda$ we obtain that $\widetilde{H}i_x$ is the constant path $C_x$. On the other hand, the path $Hi_e$ is the path $\eta(p(e)\geq b)$ which is a path of type $D$. It follows from \ref{lemm:tipo_caminos} that $\widetilde{H}i_e$ is the path $\eta(e\geq e')$ of type $D$, where $e'=\widetilde{H}(e,1)$.

It is easy to see that $e'\in p^{-1}(b)$. Indeed,
\[p(e')=p\widetilde{H}(e,1)=H(e,1)=f(e)=b.\]
We will prove that $e'=\max(U_e\cap p^{-1}(U_b))$. Let $x\in U_e\cap p^{-1}(U_b)$. By the continuity of $\widetilde{H}i_1$ and since $x\leq e$, we obtain that
\[x=C_x(1)=\widetilde{H}i_x(1)=\widetilde{H}i_1(x)\leq\widetilde{H}i_1(e)=e'.\]
The result follows.

Now we will prove $(2)$. Let $e\in E$ and let $b>p(e)$. Let $X=\{e\}\cup (F_e \cap p^{-1}(F_b))$, let $i\colon X\to E$ be the inclusion map and let $f\colon X\to B$ be defined by
\[
f(x)=\left\{
\begin{array}{ll}
p(x)&\text{if $x\neq e$,}\\
b&\text{if $x=e$.}
\end{array}
\right.
\]
It is not difficult to prove that $f$ is continuous. Note that $f\geq pi$. Now, the path $\eta(pi\leq f)$ in $B^{X}$ induces a homotopy $H\colon X\times I\to B$ from $pi$ to $f$ and we may use $\Lambda$ to obtain a lift $\widetilde{H}$ of $H$ by $p$ from $pi$. Note that the path $\widetilde{H}i_x$ is the constant path $C_x$ for all $x\in F_e\cap p^{-1}(F_b)$. Note also that the path $Hi_e$ is the path $\eta(p(e)\leq b)$ which is a path of type $U$. It follows from \ref{lemm:tipo_caminos} that $\widetilde{H}i_e$ is the path $\eta(e\leq e')$ of type $U$, where $e'=\widetilde{H}(e,1)$. Thus $e'\in p^{-1}(b)$. The proof that $e'=\min(F_e\cap p^{-1}(F_b))$ is similar to the one that was done for the previous case.
\end{proof}

From the results given in this section, we obtain the following theorem. 

\begin{theo}\label{theo:fib_sin_dbp_bif_groth}
Let $E$ and $B$ be finite T$_0$--spaces and let $p\colon E\to B$ be a Hurewicz fibration without down beat points. Then
\begin{enumerate}
\item $p$ has a normalized regular path-lifting map.
\item $p$ is a Grothendieck bifibration.
\end{enumerate}
\end{theo}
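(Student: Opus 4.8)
The plan is to deduce both items directly from the three main results established in this section, since this theorem is essentially their combination. For item~(1), I would first invoke Theorem~\ref{theo:fib_sin_dbp_regular}(2): because $p$ is a Hurewicz fibration without down beat points, there exists a regular path-lifting map $\Lambda$ for $p$. Then I would feed this $\Lambda$ into Lemma~\ref{lemm:fib_reg_N}, which produces the normalized regular path-lifting map $N(\Lambda)$ for $p$. This establishes~(1).

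For item~(2), the key observation is that $p$ is a Hurewicz fibration which admits a regular path-lifting map (either the $\Lambda$ obtained above or its normalization $N(\Lambda)$, both being regular path-lifting maps for $p$). Hence Theorem~\ref{theo:fib_reg_es_bif} applies verbatim and yields that $p$ is a Grothendieck bifibration, establishing~(2).

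No genuine obstacle arises here: the whole argument is a chain of three citations. The only point requiring (minimal) care is checking that the hypotheses line up correctly, namely that ``fibration without down beat points'' is precisely the hypothesis of Theorem~\ref{theo:fib_sin_dbp_regular}(2), and that ``admits a regular path-lifting map'', which is the conclusion of that theorem, is exactly what both Lemma~\ref{lemm:fib_reg_N} and Theorem~\ref{theo:fib_reg_es_bif} require as input.
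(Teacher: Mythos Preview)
Your proposal is correct and matches the paper's own proof essentially line for line: the paper also derives (1) by combining \ref{theo:fib_sin_dbp_regular}(2) with \ref{lemm:fib_reg_N}, and then obtains (2) from (1) via \ref{theo:fib_reg_es_bif}.
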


\begin{proof}
Since $p$ does not have down beat points, it follows from \ref{theo:fib_sin_dbp_regular} and \ref{lemm:fib_reg_N} that there exists a normalized regular path-lifting map for $p$. This proves $(1)$, while proposition $(2)$ follows from $(1)$ and \ref{theo:fib_reg_es_bif}.
\end{proof}

The following result is immediate.

\begin{coro}\label{coro:fib_tiene_dbpr_bif}
Let $E$ and $B$ be finite T$_0$--spaces and let $p\colon E\to B$ be a Hurewicz fibration. Then the smallest dbp--retract of $p$ is a Grothendieck bifibration.
\end{coro}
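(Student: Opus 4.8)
The plan is to reduce the statement to Theorem \ref{theo:fib_sin_dbp_bif_groth} via the characterization of fibrations in terms of dbp--retracts. Write $p_d\colon E_d\to B$ for the smallest dbp--retract of $p$, which exists by the proposition immediately preceding \ref{prop:core_fibrado} (the set $\Omega$ of dbp--retracts of $p$, ordered by ``being a dbp--retract of'', has a minimum). First I would note that $p_d$ is itself a Hurewicz fibration: by the implication $(1)\Rightarrow(2)$ of Corollary \ref{coro:dbpr_minimo_fibracion}, every dbp--retract of a Hurewicz fibration is a Hurewicz fibration, and in particular so is $p_d$.

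The second step is to check that $p_d$ has no down beat points. Suppose, for contradiction, that $e$ is a down beat point of $p_d$. Then by Proposition \ref{prop:r_flecha_sobre_B_sii_e_bp_de_p} the restriction $p_d|\colon E_d-\{e\}\to B$ is obtained from $p_d$ by removing a down beat point, hence is a dbp--retract of $p_d$; since $p_d$ is a dbp--retract of $p$, concatenating the two chains of removals shows $p_d|$ is a dbp--retract of $p$, so $p_d|\in\Omega$. As $p_d|$ has strictly fewer points than $p_d$, it cannot be that $p_d\leq p_d|$ in $\Omega$, contradicting the minimality of $p_d$. Therefore $p_d$ is a Hurewicz fibration without down beat points.

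Finally, I would apply part $(2)$ of Theorem \ref{theo:fib_sin_dbp_bif_groth} to $p_d$, which gives directly that $p_d$ is a Grothendieck bifibration, as claimed.

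The argument is essentially formal: the genuine content has already been carried out in Theorem \ref{theo:fib_sin_dbp_bif_groth} (and, behind it, in \ref{theo:fib_sin_dbp_regular}, \ref{lemm:fib_reg_N} and \ref{theo:fib_reg_es_bif}), so the only point requiring mild care is the second step, where one must verify that deleting a down beat point of $p_d$ really produces an element of $\Omega$ strictly below $p_d$, so that minimality can be invoked to exclude such beat points. No serious obstacle remains beyond this bookkeeping.
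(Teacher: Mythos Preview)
Your proof is correct and follows exactly the route the paper intends: the paper marks this corollary as ``immediate'' from Theorem \ref{theo:fib_sin_dbp_bif_groth}, and your three steps (the smallest dbp--retract is a fibration by \ref{coro:dbpr_minimo_fibracion}, it has no down beat points by minimality, hence \ref{theo:fib_sin_dbp_bif_groth}(2) applies) are precisely the unpacking of that word. The only thing you spell out that the paper leaves implicit is the minimality argument excluding down beat points of $p_d$, and your treatment of it is sound.
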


As a corollary of the previous results, we obtain a combinatorial proof of the fact that the fibers of a fibration with connected codomain are homotopy equivalent for the particular case of Hurewicz fibrations between finite T$_0$--spaces.

\begin{coro}
Let $E$ and $B$ be finite T$_0$--spaces such that $B$ is connected and let $p\colon E\to B$ be a Hurewicz fibration. Then the fibers of $p$ are homotopy equivalent.
\end{coro}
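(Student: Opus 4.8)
The plan is to reduce to the Grothendieck--bifibration case by passing to the smallest dbp--retract of $p$, which was already shown to be a Grothendieck bifibration in \ref{coro:fib_tiene_dbpr_bif}, and whose fibers carry the same homotopy types as those of $p$.

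First I would let $p_d\colon E_d\to B$ denote the smallest dbp--retract of $p$. By construction $p_d$ is obtained from $p$ by successively removing down beat points of $p$, so applying \ref{prop:fib_sacar_bp} at each step and composing the resulting fiber homotopy equivalences, I get that $p_d$ is a Hurewicz fibration which is fiber homotopy equivalent to $p$. Restricting a fiber homotopy equivalence $p\to p_d$, its homotopy inverse, and the homotopies witnessing the equivalence to the fiber over a fixed $b\in B$ shows that $p_d^{-1}(b)$ is homotopy equivalent to $p^{-1}(b)$ for every $b\in B$. (Alternatively, one can argue directly: each time a down beat point $e_0$ of $p$ is removed, the fiber over $p(e_0)$ loses a beat point, hence keeps its homotopy type by Stong's theorem, while all other fibers are left unchanged.)

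Next, by \ref{coro:fib_tiene_dbpr_bif} the map $p_d$ is a Grothendieck bifibration, and since $B$ is connected, \ref{prop_alpha_y_beta_fib_hom_eq} applies and yields that the fibers of $p_d$ are pairwise homotopy equivalent. Combining this with the previous paragraph, the fibers of $p$ are pairwise homotopy equivalent as well, which is the desired assertion.

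There is essentially no serious obstacle remaining here, since the substantive work is contained in \ref{coro:fib_tiene_dbpr_bif} and \ref{prop_alpha_y_beta_fib_hom_eq}; the only point requiring a little care is the observation that passing to the smallest dbp--retract alters each individual fiber only up to homotopy equivalence, and this is immediate from the definition of a down beat point of $p$ (it is in particular a beat point of the fiber containing it, whose removal gives a strong deformation retract) together with \ref{prop:fib_sacar_bp}.
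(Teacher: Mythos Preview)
Your proof is correct and follows essentially the same route as the paper: pass to the smallest dbp--retract of $p$, invoke \ref{coro:fib_tiene_dbpr_bif} to see it is a Grothendieck bifibration, apply \ref{prop_alpha_y_beta_fib_hom_eq}, and then observe that each fiber of the dbp--retract is obtained from the corresponding fiber of $p$ by removing beat points (the paper phrases this last step as ``$p_0^{-1}(b)$ is a dbp--retract of $p^{-1}(b)$'', which is exactly your alternative argument).
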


\begin{proof}
Let $p_0$ be the smallest dbp--retract of $p$. Then $p_0$ is a Hurewicz fibration without down beat points. It follows from \ref{theo:fib_sin_dbp_bif_groth} that $p_0$ is a Grothendieck bifibration. Since $B$ is connected, it follows from  \ref{prop_alpha_y_beta_fib_hom_eq} that the fibers of $p_0$ are homotopy equivalent. But for each $b\in B$, the fiber $p_0^{-1}(b)$ is a dbp--retract of the fiber $p^{-1}(b)$ since it can be obtained from it by successively removing down beat points of $p$. The result follows.
\end{proof}

\begin{lemma}
Let $E$ and $B$ be finite T$_0$--spaces and let $p\colon E\to B$ be a continuous map without down beat points. Suppose, in addition, that $B$ has a maximum element and that $h(B)=1$. Then the following propositions are equivalent.
\begin{enumerate}
\item $p$ is a Hurewicz fibration.
\item $p$ is a Grothendieck bifibration.
\item $p$ is a retract of the canonical projection $\pi_B\colon E\times B\to B$.
\end{enumerate}
\end{lemma}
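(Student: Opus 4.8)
The plan is to establish the cycle of implications $(1)\Rightarrow(2)\Rightarrow(3)\Rightarrow(1)$, from which the three-way equivalence follows. The pleasant feature here is that each implication is essentially an invocation of a result already proved, so almost no new argument is needed; the only thing to be careful about is the precise meaning of ``retract'' in clause (3).

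First I would note that $(1)\Rightarrow(2)$ is exactly Theorem \ref{theo:fib_sin_dbp_bif_groth}(2): since $p$ is a Hurewicz fibration between finite $T_0$--spaces without down beat points, it is a Grothendieck bifibration. (Only the hypothesis on beat points is used here, not the assumptions on $B$.) Next, $(2)\Rightarrow(3)$ is precisely Lemma \ref{lemm:altura_1_con_maximo}: a Grothendieck bifibration whose base $B$ has a maximum element and satisfies $h(B)=1$ is a retract, over $B$, of the canonical projection $\pi_B\colon E\times B\to B$. These are exactly the standing hypotheses on $B$, so this implication requires nothing further.

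For $(3)\Rightarrow(1)$ I would argue directly. The projection $\pi_B\colon E\times B\to B$ is a Hurewicz fibration: given $f\colon Z\to E\times B$ and $H\colon Z\times I\to B$ with $Hi_0=\pi_B f$, the map $(z,t)\mapsto(\pi_E f(z),H(z,t))$ is a lift; alternatively, $\pi_B$ is the pullback of the (trivially fibrant) map $E\to\{*\}$ along $B\to\{*\}$. Since retracts of Hurewicz fibrations are Hurewicz fibrations, and a retract over $B$ is in particular a retract in the arrow category (the bottom row of the retract diagram being $\id_B$), it follows that $p$ is a Hurewicz fibration. This closes the cycle.

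I do not expect a genuine obstacle, since the substantive content has already been packaged into Theorem \ref{theo:fib_sin_dbp_bif_groth} and Lemma \ref{lemm:altura_1_con_maximo}. The one point that deserves an explicit sentence is to record that ``retract of $\pi_B$'' in (3) is to be understood as a retract over $B$ (which is what Lemma \ref{lemm:altura_1_con_maximo} produces), and to observe that this is a special case of the notion of retract under which the class of Hurewicz fibrations is known to be closed — so that the implication $(3)\Rightarrow(1)$ is legitimate.
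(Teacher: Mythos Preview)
Your proof is correct and follows exactly the same route as the paper: $(1)\Rightarrow(2)$ by Theorem~\ref{theo:fib_sin_dbp_bif_groth}, $(2)\Rightarrow(3)$ by Lemma~\ref{lemm:altura_1_con_maximo}, and $(3)\Rightarrow(1)$ because retracts of Hurewicz fibrations are Hurewicz fibrations. Your extra remarks explaining why $\pi_B$ is a fibration and clarifying that the retract is over $B$ are more explicit than the paper's ``is clear'', but the argument is the same.
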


\begin{proof}
The implication $(1)\Rightarrow (2)$ follows from \ref{theo:fib_sin_dbp_bif_groth}. From \ref{lemm:altura_1_con_maximo} it follows that $(2)\Rightarrow (3)$. The implication $(3)\Rightarrow (1)$ is clear.
\end{proof}

It follows from the previous lemma and from \ref{coro:dbpr_minimo_fibracion} that if $E$ is a finite T$_0$--space, $\S$ is the Sierpinski space and $p\colon E\to \S$ is a continuous map, then $p$ is a Hurewicz fibration if and only if the smallest dbp--retract of $p$ is a Grothendieck bifibration. The following theorem extends this result to other codomains which have a minimum element.

\begin{theo}\label{theo:bif_con_minimo_es_fib}
Let $E$ and $B$ be finite T$_0$--spaces and let $p\colon E\to B$ be a continuous map. Suppose, in addition, that $B$ has a minimum element $b_0$. Then $p$ is a Hurewicz fibration if and only if the smallest dbp--retract of $p$ is a Grothendieck bifibration.
\end{theo}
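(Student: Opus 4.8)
The ``only if'' implication is exactly Corollary~\ref{coro:fib_tiene_dbpr_bif}, so I would concentrate on the converse. Let $p_0\colon E_0\to B$ denote the smallest dbp--retract of $p$; by hypothesis $p_0$ is a Grothendieck bifibration, and by construction $p_0$ has no down beat points, so $p_0$ is its own only dbp--retract. Since $p_0$ is a dbp--retract of $p$, Corollary~\ref{coro:dbpr_minimo_fibracion} reduces the problem to showing that $p_0$ is a Hurewicz fibration, and my plan is to exhibit $p_0$ as a retract over $B$ of the trivial projection $\pi_B\colon B\times p_0^{-1}(b_0)\to B$, which is a Hurewicz fibration (with the evident path--lifting map $\Lambda\big((b,x),\gamma\big)(t)=(\gamma(t),x)$) and whose retracts are therefore Hurewicz fibrations as well.

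To build the retraction I would use the canonical functors $\alpha\colon B^{\op}\to\Top$ and $\beta\colon B\to\Top$ attached to $p_0$ in Theorem~\ref{theo:p_alpha_beta}, together with the fact that $b_0\le b$ for every $b\in B$. Define arrows over $B$
\[
\Psi\colon E_0\to B\times p_0^{-1}(b_0),\qquad \Psi(e)=\big(p_0(e),\,\alpha(p_0(e)\ge b_0)(e)\big),
\]
\[
\Phi\colon B\times p_0^{-1}(b_0)\to E_0,\qquad \Phi(b,e_0)=\beta(b_0\le b)(e_0).
\]
These are well defined precisely because $b_0$ is the minimum of $B$, and they are maps over $B$ since $\beta(b_0\le b)(e_0)\in p_0^{-1}(b)$ and $\alpha(p_0(e)\ge b_0)(e)\in p_0^{-1}(b_0)$. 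Their continuity I would check by hand, i.e.\ by verifying that they are order--preserving, using the explicit formulas for $\alpha$ and $\beta$ and Lemma~\ref{lemm:fib_groth_entre_posets}: for instance if $e\le e'$ then $U_e\subseteq U_{e'}$, whence $\alpha(p_0(e)\ge b_0)(e)=\max(U_e\cap p_0^{-1}(b_0))\le\max(U_{e'}\cap p_0^{-1}(b_0))=\alpha(p_0(e')\ge b_0)(e')$, and similarly for $\Phi$ using $\beta(b\le b')\ge\id$ and functoriality of $\beta$.

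Next I would apply Lemma~\ref{lemm:props_de_alpha_beta} with $b=b_0$ and $b'=p_0(e)$: item (3) yields $\Phi\Psi\le\id_{E_0}$, and item (4) yields $\Psi\Phi\Psi=\Psi$, hence $(\Phi\Psi)^2=\Phi\Psi$. Thus $\Phi\Psi$ is an idempotent arrow over $B$ from $p_0$ to $p_0$ lying below $\id_{E_0}$, so by Theorem~\ref{theo:dbpr_equivalences_sobre_B} its image is a dbp--retract of $p_0$; since $p_0$ has no down beat points, that image must be all of $E_0$, and an idempotent surjection is the identity, so $\Phi\Psi=\id_{E_0}$. Therefore $p_0$ is a retract of $\pi_B$ over $B$, hence a Hurewicz fibration, and by Corollary~\ref{coro:dbpr_minimo_fibracion} so is $p$.

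The reduction to $p_0$, the verification that $\Phi$ and $\Psi$ are order--preserving, and the final bookkeeping are all routine given the results already in the paper. The real content is the construction of $\Psi$ and $\Phi$: the hypothesis that $B$ has a minimum element $b_0$ is exactly what makes these two maps available \emph{uniformly} over all fibers, and the hypothesis that we are working with the \emph{smallest} dbp--retract is exactly what forces the a priori merely idempotent comparison $\Phi\Psi\le\id_{E_0}$ to collapse to the identity. This collapse is the step I expect to require the most care to present cleanly.
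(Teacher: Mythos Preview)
Your argument is correct, but it takes a somewhat different route from the paper's own proof. Both proofs reduce to showing that a Grothendieck bifibration $p_0\colon E_0\to B$ (with $B$ having minimum $b_0$) is a Hurewicz fibration, and both are built from the same raw material, namely the composite $e\mapsto \beta(b_0\le b)\,\alpha(p_0(e)\ge b_0)(e)$. The paper, however, does \emph{not} pass to a retract of a trivial projection: it packages this formula as a continuous map $s\colon E_0\to E_0^{B}$ (a ``section through each point'') and then writes down an explicit path--lifting map $\lambda(e,\gamma,t)=s(e)(\gamma(t))$ for $t>0$ and $\lambda(e,\gamma,0)=e$, verifying its continuity via Proposition~\ref{prop:f_leq_g_g_cont}. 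In particular, the paper's argument never uses that $p_0$ has no down beat points: it proves the stronger statement that \emph{every} Grothendieck bifibration over a $B$ with minimum is a Hurewicz fibration.

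Your approach, by contrast, exhibits $p_0$ as a retract over $B$ of $B\times p_0^{-1}(b_0)\to B$, in the spirit of Lemma~\ref{lemm:altura_1_con_maximo}. This is structurally cleaner and yields the additional information that $p_0$ is a retract of a trivial projection, but it genuinely needs the no--down--beat--points hypothesis to force $\Phi\Psi=\id_{E_0}$. (Incidentally, once you know $\Phi\Psi\le\id_{E_0}$ is an arrow over $B$, Proposition~\ref{prop:sin_dbp_comparable_con_id_implica_id} gives $\Phi\Psi=\id_{E_0}$ immediately; the detour through idempotency and Theorem~\ref{theo:dbpr_equivalences_sobre_B} is not needed.)
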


\begin{proof}
By corollaries \ref{coro:fib_tiene_dbpr_bif} and \ref{coro:dbpr_minimo_fibracion}, it suffices to prove that if $p$ is a Grothendieck bifibration, then it is a Hurewicz fibration. Thus suppose that $p$ is a Grothendieck bifibration. Consider the functors $\alpha$ and $\beta$ constructed in \ref{theo:p_alpha_beta}.

We define the map $s\colon E\to E^{B}$ by
\[s(e)(b)=\beta(b_0\leq b)\alpha(p(e)\geq b_0)(e)\]
for each $b\in B$ and for each $e\in E$.

We will prove that $s$ is well defined. If $b,b'\in B$ are such that $b\leq b'$, then $F_{b'}\subseteq F_{b}$. Thus $\beta(b_0\leq b)(e_0)\leq \beta(b_0\leq b')(e_0)$ for all $e_0\in p^{-1}(b_0)$ by remark \ref{rema:cleavage_for_opfibration}. Given $e\in E$ and taking $e_0=\alpha(p(e)\geq b_0)(e)$ in the previous inequality, it follows that $s(e)(b)\leq s(e)(b')$. Hence, $s(e)$ is continuous for all $e\in E$. Therefore $s$ is well defined as desired.

Now we will prove that $s$ is continuous. Let $e,e'\in E$ be such that $e\leq e'$. Then $U_e\subseteq U_{e'}$ and thus
\[\alpha(p(e)\geq b_0)(e)=\max(U_e\cap p^{-1}(b_0))\leq \max(U_{e'}\cap p^{-1}(b_0))=\alpha(p(e')\geq b_0)(e').\]
Now, if $b\in B$, from the continuity of $\beta(b_0\leq b)$ it follows that 
\[s(e)(b)=\beta(b_0\leq b)\alpha(p(e)\geq b_0)(e)\leq\beta(b_0\leq b)\alpha(p(e')\geq b_0)(e')=s(e')(b).\]
Hence, $s(e)\leq s(e')$. 

Observe that the maps $s\colon E\to E^{B}$ and $\ev\colon B^{I}\times I\to B$ induce a continuous map 
\[(E\times_p B^{I})\times I\to E^{B}\times B\]
that sends each element $(e,\gamma,t)$ of $(E\times_p B^{I})\times I$ to the element $(s(e),\gamma(t))$ of $E^{B}\times B$. Composing this map with the evaluation map $\ev\colon E^{B}\times B\to B$ we obtain a continuous map
$\phi\colon (E\times_p B^{I})\times I\to E$ defined by $\phi(e,\gamma,t)=s(e)(\gamma(t))$ for all $(e,\gamma)\in E\times_p B^{I}$ and all $t\in I$.

We define now $\lambda\colon (E\times_p B^{I})\times I\to E$ by 
\[\lambda(e,\gamma,t)=\left\{
\begin{array}{ll}
e&\text{if $t=0$,}\\
s(e)(\gamma(t))&\text{if $t>0$.}
\end{array}
\right.                                                        
\]
It is clear that $\lambda(e,\gamma,t)=\phi(e,\gamma,t)$ for all $(e,\gamma,t)$ in the open subset $(E\times_p B^{I})\times (0,1]$. On the other hand, if $(e,\gamma)\in E\times_p B^{I}$, we have that
\begin{align*}
\phi(e,\gamma,0)&=s(e)(\gamma(0))=\beta(b_0\leq \gamma(0))\alpha(p(e)\geq b_0)(e)=\\
&=\beta(b_0\leq \gamma(0))\alpha(\gamma(0)\geq b_0)(e)\leq e=\lambda(e,\gamma,0)
\end{align*}
where the inequality holds by item $(3)$ of \ref{lemm:props_de_alpha_beta}. Thus, $\lambda\geq \phi$. In addition, it is clear that the restriction of $\lambda$ to the closed subset $E\times_p B^{I}\times\{0\}$ is continuous since it coincides with the projection to $E$. By \ref{prop:f_leq_g_g_cont}, it follows that $\lambda$ is continuous. On the other hand,  we have that $\lambda(e,\gamma,0)=e$ for all $(e,\gamma)\in E\times_p B^{I}$ and since $s(e)$ is a section of $p$ for all $e\in E$ it follows that $p\lambda(e,\gamma,t)=\gamma(t)$ for all $(e,\gamma)\in E\times_p B^{I}$ and all $t\in I$. Hence, $\lambda^\sharp\colon E\times_p B^{I}\to E^{I} $ is a path-lifting map for $p$. Therefore, $p$ is a Hurewicz fibration.
\end{proof}

\section{Examples}

In this section we will show several important examples which give counterexamples to natural questions related to this work.

\subsection{A Hurewicz fibration that is not a closed map}
\label{subs:fib_no_cerrada}

In this example we exhibit a Hurewicz fibration $p_1$ between finite T$_0$--spaces that is not closed. As a consequence we obtain an example of a Hurewicz fibration between finite T$_0$--spaces whose opposite map is not a Hurewicz fibration.

Let $E_1$ and $B_1$ be the finite T$_0$--spaces represented in the following diagram and let $p_1\colon E_1\to B_1$ be defined by $p_1(x,t)=x$.

\begin{center}
\begin{tikzpicture}[x=2cm,y=2cm]
\tikzstyle{every node}=[font=\footnotesize]
\draw (0,1) node (b0){$\bullet$} node[above=1]{$(b,0)$};
\draw (0.5,0) node (a0){$\bullet$} node[below=1]{$(a,0)$};
\draw (1,1) node (a1){$\bullet$} node[above=1]{$(a,1)$};
\draw (b0)--(a0);
\draw (a0)--(a1);
\draw (1.5,0.5) node (s){};
\draw (2.5,0.5) node (t){};
\draw[\arr] (s) -> (t) node [midway,below] {$p_1$};
\draw (3,0) node (a){$\bullet$} node [below=1]{$a$};
\draw (3,1) node (b){$\bullet$} node [above=1]{$b$};
\draw (a)--(b);
\draw (0.5,-0.5) node (E){\normalsize $E_1$};
\draw (3,-0.5) node (B){\normalsize $B_1$};
\end{tikzpicture}
\end{center}

Note that if we remove the down beat point $(a,1)$ of $p_1$ we obtain a homeomorphism, and in particular, a Hurewicz fibration. From \ref{theo:fib_al_sacar_dbp}, it follows that $p_1$ is a Hurewicz fibration. Note also that, since $p_1(\{(a,1)\})=\{a\}$, then $p_1$ is not closed. Hence, the Hurewicz fibrations between finite T$_0$--spaces are not necessarily closed.

Observe also that the space $B_1$ has up beat points although the space $E_1$ does not have up beat points. This shows that \ref{coro:E_sin_dbp_entonces_B_sin_dbp} does not hold if we replace down beat points by up beat points.

Since $p_1$ is not a closed map, then $p_1^{\op}$ is not an open map. Thus, from \ref{coro:p_abta} it follows that $p_1^{\op}$ is not a fibration.

\begin{center}
\begin{tikzpicture}[x=2cm,y=2cm]
\tikzstyle{every node}=[font=\footnotesize]
\draw (0,0) node (b0){$\bullet$} node[below=1]{$(b,0)$};
\draw (0.5,1) node (a0){$\bullet$} node[above=1]{$(a,0)$};
\draw (1,0) node (a1){$\bullet$} node[below=1]{$(a,1)$};
\draw (b0)--(a0);
\draw (a0)--(a1);
\draw (1.5,0.5) node (s){};
\draw (2.5,0.5) node (t){};
\draw[\arr] (s) -> (t) node [midway,below] {$p_1^{\op}$};
\draw (3,1) node (a){$\bullet$} node [above=1]{$a$};
\draw (3,0) node (b){$\bullet$} node [below=1]{$b$};
\draw (a)--(b);
\draw (0.5,-0.5) node (E){\normalsize $E_1^\op$};
\draw (3,-0.5) node (B){\normalsize $B_1^\op$};
\end{tikzpicture}
\end{center}

Note that if we remove the up beat point $(a,1)$ of $p_1^{\op}$ we obtain a homeomorphism. This shows that in \ref{theo:fib_al_sacar_dbp} it is essential that the beat point that is removed is a down beat point of the considered map.

Moreover, the map $p_1$ does not have up beat points, and as $p_1$ is not a Grothendieck bifibration, it follows from  \ref{theo:fib_reg_es_bif} that $p_1$ does not have regular path-lifting maps. This shows that item $(2)$ of  \ref{theo:fib_sin_dbp_regular} does not hold changing down beat points to up beat points.

\subsection{A Serre fibration that is not a Hurewicz fibration}

In this example we will show that there exist maps between finite T$_0$--spaces that have the homotopy lifting property with respect to metric spaces (and in particular, with respect to locally finite CW--complexes) which are not Hurewicz fibrations.

Let $E_2$ and $B_2$ be the finite T$_0$--spaces represented in the following diagram and let $p_2\colon E_2\to B_2$ be defined by $p_2(x,t)=x$.

\begin{center}
\begin{tikzpicture}[x=2cm,y=2cm]
\tikzstyle{every node}=[font=\footnotesize]
\draw (0,1) node (b0){$\bullet$} node[above=1]{$(b,0)$};
\draw (0.5,0) node (a0){$\bullet$} node[below=1]{$(a,0)$};
\draw (1,1) node (a1){$\bullet$} node[above=1]{$(a,1)$};
\draw (1.5,0) node (a2){$\bullet$} node[below=1]{$(a,2)$};
\draw (b0)--(a0);
\draw (a0)--(a1);
\draw (a1)--(a2);
\draw (2,0.5) node (s){};
\draw (3,0.5) node (t){};
\draw[\arr] (s) -> (t) node [midway,below] {$p_2$};
\draw (3.5,0) node (a){$\bullet$} node [below=1]{$a$};
\draw (3.5,1) node (b){$\bullet$} node [above=1]{$b$};
\draw (a)--(b);
\draw (0.85,-0.5) node (E){\normalsize $E_2$};
\draw (3.5,-0.5) node (B){\normalsize $B_2$};
\end{tikzpicture}
\end{center}

Note that the map $p_2$ does not have down beat points. Since $p_2(\{(a,1)\})=\{a\}$, then $p_2$ is not a closed map and it follows from \ref{prop:fin_sin_dbp_cerrada} that $p_2$ is not a Hurewicz fibration. Naturally, we could have arrived to the same conclusion from \ref{theo:Fe_cap_fibra_b} observing that there does not exist a point in the fiber of $a$ which is smaller than $(a,2)$ and whose closure intersects the fiber of $b$. We will see, however, that the map $p_2$ has the homotopy lifting property with respect to metric spaces and with respect to compact spaces. This implies that the map $p_2$ has the homotopy lifting property with respect to locally finite CW--complexes and with respect to finite topological spaces.

\begin{lemma}
Let $X$ be a topological space. If $X$ is either a metric space or a compact space, then the map $p_2$ has the homotopy lifting property with respect to $X$.
\end{lemma}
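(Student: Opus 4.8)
The plan is to reduce the homotopy lifting problem for $p_2$ over the Sierpi\'nski base $B_2$ to an explicit combinatorial gluing, and to isolate the single step that uses the hypothesis on $X$. Write the points of $B_2$ as $a<b$ (so $\{a\}$ is open, $\{b\}$ closed) and fix continuous maps $f\colon X\to E_2$ and $H\colon X\times I\to B_2$ with $Hi_0=p_2f$. Put $C=H^{-1}(\{b\})$, which is closed in $X\times I$, and $U=H^{-1}(\{a\})$, which is open. Since $p_2^{-1}(b)$ is the single point $(b,0)$, every lift $\widetilde H$ must equal $(b,0)$ on $C$ and take values in the fibre $P:=p_2^{-1}(a)=\{(a,0),(a,1),(a,2)\}$ on $U$; and since the minimal open set of $E_2$ containing $(b,0)$ meets $P$ only in $(a,0)$, continuity of $\widetilde H$ along $C$ forces $\widetilde H=(a,0)$ on a neighbourhood of $C$ inside $U$. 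Set $A_i=f^{-1}(a,i)$ and $X_b=f^{-1}(b,0)$; since $(a,1)$ is maximal and $F^{E_2}_{(a,2)}=\{(a,1),(a,2)\}$, continuity of $f$ gives that $A_1$ is closed, $A_2$ is open, and $\overline{A_2}\subseteq A_1\cup A_2\subseteq X\setminus X_b$, while $C\cap(X\times\{0\})=X_b\times\{0\}$.

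Next I would show it suffices to produce open subsets $W_0,W_2$ of $U$ with $W_2\cap(X\times\{0\})=A_2\times\{0\}$, $W_0\cap(X\times\{0\})=A_0\times\{0\}$, $W_0\cap W_2=\varnothing$, $W_0\cup C$ open in $X\times I$, and $W_0\cup W_2$ disjoint from $A_1\times\{0\}$. Given these, define $\widetilde H$ to be $(b,0)$ on $C$, $(a,0)$ on $W_0$, $(a,2)$ on $W_2$, and $(a,1)$ on $U\setminus(W_0\cup W_2)$. Then $p_2\widetilde H=H$ by construction, $\widetilde Hi_0=f$ by the first, second and last conditions, and $\widetilde H$ is continuous because its preimages of the minimal open sets $\{(a,0)\}$, $\{(a,2)\}$, $\{(a,0),(b,0)\}$ and $P$ are precisely $W_0$, $W_2$, $W_0\cup C$ and $U$, all open — the point being that $\{(a,1)\}$ is only closed, so the $(a,1)$-region is allowed to be non-open and to buffer between the two minimal points $(a,0)$ and $(a,2)$.

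It remains to build $W_0$ and $W_2$. Over $A_0$ one takes the obvious open collar $W_0''=\bigcup_{x\in A_0}O_x\times[0,\varepsilon_x)$ with $O_x\subseteq A_0$ and $O_x\times[0,\varepsilon_x)\subseteq U$, which needs no hypothesis. The delicate point is to produce an open $W_2$ with $A_2\times\{0\}\subseteq W_2\subseteq U\cap(A_2\times I)$ and $\overline{W_2}\cap C=\varnothing$ (this is unavoidable: any lift satisfies $\overline{\widetilde H^{-1}(a,2)}\subseteq U\setminus\widetilde H^{-1}(a,0)$, a closed set disjoint from $C$), and it is here that the hypothesis enters. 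If $X$ is metric, then $X\times I$ is metrizable hence normal, so the disjoint closed sets $\overline{A_2}\times\{0\}$ and $C$ can be separated with room. If $X$ is compact, then $A_1\cup A_2$ is compact, so a finite subcover of $(A_1\cup A_2)\times\{0\}$ by boxes contained in $U$ yields $G\times[0,\varepsilon)\subseteq U$ for some open $G\supseteq A_1\cup A_2$, and $W_2:=A_2\times[0,\varepsilon/2)$ works since $\overline{W_2}=\overline{A_2}\times[0,\varepsilon/2]\subseteq G\times[0,\varepsilon)\subseteq U$ (using $\overline{A_2}\subseteq A_1\cup A_2\subseteq G$). Once $W_2$ is available one sets $V:=\big(\,\overline{\,(A_1\cup A_2)\times\{0\}\ \cup\ W_2\,}\,\big)^{c}$, which is open, contains $C$, and is disjoint from $(A_1\cup A_2)\times\{0\}$ and from $W_2$ because its complement lies in $U$; then $W_0:=W_0''\cup(V\cap U)$, and a routine check yields the five required properties. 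I expect this construction of $W_2$ — equivalently, separating the $(a,2)$-part of the lift from $C$ by a buffer — together with the bookkeeping that makes the piecewise $\widetilde H$ simultaneously well defined and continuous, to be the main obstacle; it is precisely the step at which an arbitrary $X$ fails, reflecting the already-established fact that $p_2$ is not a Hurewicz fibration.
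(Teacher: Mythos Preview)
Your approach is correct and essentially the same as the paper's: both define the lift piecewise, sending the four regions to $(b,0)$, $(a,0)$, $(a,1)$, $(a,2)$, and both obtain the crucial separating set by normality of $X\times I$ in the metric case and by the Tube Lemma in the compact case. The only cosmetic difference is that the paper works directly with a single open set $V$ satisfying $f^{-1}(F_{(a,2)})\times\{0\}\subseteq V\subseteq\overline V\subseteq H^{-1}(a)$ and carves the four regions out of $V$ and $\overline V$, whereas you axiomatize the pieces via the pair $(W_0,W_2)$ first and then build them; the underlying construction and verification are the same.
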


\begin{proof}
Let $f\colon X\to E_2$ and $H\colon X\times I\to B_2$ be continuous maps such that $Hi_0=p_2f$.

First suppose that $X$ is a metric space. Note that $X\times I$ is also a metric space and hence it is a normal space. Since $f^{-1}(F_{(a,2)})$ is a closed subset of $X$, we have that $f^{-1}(F_{(a,2)})\times \{0\}$ is a closed subset of $X\times I$ which is contained in the open subset $H^{-1}(a)$. Since $X\times I$ is a normal space, there exists an open subset $V$ of $X\times I$ such that 
\[f^{-1}(F_{(a,2)})\times \{0\}\subseteq V\subseteq\overline{V}\subseteq H^{-1}(a).\]

It is not difficult to verify that
\begin{enumerate}
\item The set $H^{-1}(b)$ is closed in $X\times I$.
\item The set $(f^{-1}(a,2)\times I)\cap V$ is open in $X\times I$.
\item The set $(f^{-1}(F_{(a,2)})\times I)\cap \overline{V}-(f^{-1}(a,2)\times I)\cap V$ is closed in $X\times I$.
\item The set $H^{-1}(a)-(f^{-1}(F_{(a,2)})\times I)\cap \overline{V}$ is open in $X\times I$.
\end{enumerate}
Moreover, these four sets are pairwise disjoint and cover $X\times I$.

We define the map $\tilde{H}\colon X\times I\to E_2$ by 
\[\tilde{H}(x,t)=
\left\{
\begin{array}{ll}
(b,0)&\text{if $(x,t)\in H^{-1}(b)$,}\\
(a,0)&\text{if $(x,t)\in H^{-1}(a)-(f^{-1}(F_{(a,2)})\times I)\cap \overline{V}$,}\\
(a,1)&\text{if $(x,t)\in (f^{-1}(F_{(a,2)})\times I)\cap \overline{V}-(f^{-1}(a,2)\times I)\cap V$, and}\\
(a,2)&\text{if $(x,t)\in (f^{-1}(a,2)\times I)\cap V$.}
\end{array}
\right.
\]
It is immediate that $\tilde{H}^{-1}(a,0)$ and $\tilde{H}^{-1}(a,2)$ are open subsets of $X\times I$. Moreover, since $\tilde{H}^{-1}(b,0)$ is closed, it follows that $\tilde{H}^{-1}(U_{(a,1)})$ is open. Finally, $\tilde{H}^{-1}(\{(a,1),(a,2)\})=(f^{-1}(F_{(a,2)})\times I)\cap \overline{V}$ and hence, it is closed in $X\times I$. It follows that $\tilde{H}^{-1}(U_{(b,0)})$ is open. Therefore, $\tilde{H}$ is continuous. It is easy to verify that $\tilde{H}i_0=f$ and that $p_2\tilde{H}=H$.

Now suppose that $X$ is compact. Then $f^{-1}(F_{(a,2)})$ is a compact subspace. Since $f^{-1}(F_{(a,2)})\times \{0\}\subseteq H^{-1}(a)$, by the Tube Lemma there exists $\varepsilon>0$ such that $f^{-1}(F_{(a,2)})\times [0,\varepsilon]\subseteq H^{-1}(a)$.

In this case, we define the map $\tilde{H}\colon X\times I\to E_2$ by 
\[\tilde{H}(x,t)=
\left\{
\begin{array}{ll}
(b,0)&\text{if $(x,t)\in H^{-1}(b)$,}\\
(a,0)&\text{if $(x,t)\in H^{-1}(a)-f^{-1}(F_{(a,2)})\times [0,\varepsilon]$,}\\
(a,1)&\text{if $(x,t)\in f^{-1}(F_{(a,2)})\times [0,\varepsilon]-f^{-1}(a,2)\times [0,\varepsilon)$, and}\\
(a,2)&\text{if $(x,t)\in f^{-1}(a,2)\times [0,\varepsilon)$.}
\end{array}
\right.
\]
As in the previous case, it is not difficult to prove that $\tilde{H}$ is a continuous map and that $\tilde{H}i_0=f$ and that $p_2\tilde{H}=H$.
\end{proof}

Since locally finite CW--complexes are metric spaces, we immediately obtain the following corollary.

\begin{coro}
The map $p_2$ is a Serre fibration that is not a Hurewicz fibration.
\end{coro}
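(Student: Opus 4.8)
The plan is to derive this from the two facts about $p_2$ already established: that it fails to be a Hurewicz fibration, and that it has the homotopy lifting property with respect to every metric space and every compact space.

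First I recall why $p_2$ is not a Hurewicz fibration. The map $p_2$ has no down beat points, while $\{(a,1)\}$ is a closed subspace of $E_2$ whose image $p_2(\{(a,1)\})=\{a\}$ is not closed in $B_2$; hence $p_2$ is not a closed map. By Proposition \ref{prop:fin_sin_dbp_cerrada}, a Hurewicz fibration between finite T$_0$--spaces without down beat points must be closed, so $p_2$ cannot be a Hurewicz fibration.

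Next I check that $p_2$ is a Serre fibration, i.e.\ that it has the homotopy lifting property with respect to $D^n$ for every $n\in\N_0$. Each disk $D^n$ is simultaneously a metric space and a compact space, so the previous lemma applies verbatim with $X=D^n$ and yields the required homotopy lifting property. Combining the two parts gives the statement: $p_2$ is a Serre fibration that is not a Hurewicz fibration.

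I do not expect any genuine obstacle here. All of the real work sits in the previous lemma, where the lift $\widetilde{H}$ is constructed by partitioning $X\times I$ into suitable open and closed pieces (using normality of $X\times I$ in the metric case and the Tube Lemma in the compact case). Once that lemma is in hand, the corollary is immediate precisely because $D^n$ lies in the intersection of the two classes of spaces treated there.
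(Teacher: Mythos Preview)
Your proposal is correct and matches the paper's approach: the paper's proof is the single sentence that locally finite CW--complexes are metric spaces, so the preceding lemma applies to each $D^n$, while the ``not a Hurewicz fibration'' part was already recorded just before that lemma. Your observation that each $D^n$ is itself metric (and compact) is exactly this argument, stated directly.
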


\subsection{A Grothendieck bifibration that is not a retract of a fiber bundle}
\label{subs:no_retracto_de_proy}

In this example we exhibit a Grothendieck bifibration between finite T$_0$--spaces that is not a retract of a fiber bundle of the same type. In particular, it can not be obtained by successively removing beat points of a fiber bundle between finite T$_0$--spaces.

Let $E_3$ and $B_3$ be the finite T$_0$--spaces represented in the following diagram and let $p_3\colon E_3\to B_3$ be defined by $p_3(x,t)=x$.

\smallskip

\begin{center}
\begin{tikzpicture}[x=2cm,y=2cm]
\tikzstyle{every node}=[font=\footnotesize]
\draw (0,0) node (a0){$\bullet$} node[below=1]{$(a,0)$};
\draw (1,0) node (a1){$\bullet$} node[below=1]{$(a,1)$};
\draw (0.5,0.5) node (a2){$\bullet$} node[left=1]{$(a,2)$};
\draw (3,0) node (b0){$\bullet$} node[below=1]{$(b,0)$};
\draw (0,1.5) node (c0){$\bullet$} node[left=1]{$(c,0)$};
\draw (0.5,2) node (c1){$\bullet$} node[left=1]{$(c,1)$};
\draw (3,1.5) node (d0){$\bullet$} node[right=1]{$(d,0)$};
\draw (2.5,2) node (d1){$\bullet$} node[right=1]{$(d,1)$};
\draw (1.5,3) node (e0){$\bullet$} node[above=1]{$(e,0)$};
\draw (a0)--(a2);
\draw (a0)--(c0);
\draw (a1)--(a2);
\draw (a1)--(d0);
\draw (a2)--(c1);
\draw (a2)--(d1);
\draw (b0)--(c0);
\draw (b0)--(d0);
\draw (c0)--(c1);
\draw (d0)--(d1);
\draw (c1)--(e0);
\draw (d1)--(e0);
\draw (3.7,1.5) node (s){};
\draw (4.5,1.5) node (t){};
\draw[\arr] (s) -> (t) node [midway,below] {$p_3$};
\draw (5,0.5) node (a){$\bullet$} node [below=1]{$a$};
\draw (6,0.5) node (b){$\bullet$} node [below=1]{$b$};
\draw (5,1.5) node (c){$\bullet$} node [left=1]{$c$};
\draw (6,1.5) node (d){$\bullet$} node [right=1]{$d$};
\draw (5.5,2.5) node (e){$\bullet$} node [above=1]{$e$};
\draw (a)--(c);
\draw (a)--(d);
\draw (b)--(c);
\draw (b)--(d);
\draw (c)--(e);
\draw (d)--(e);
\draw (1.5,-0.6) node (E){\normalsize $E_3$};
\draw (5.5,-0.6) node (B){\normalsize $B_3$};
\end{tikzpicture}
\end{center}

Observe that $p_3$ is a Grothendieck bifibration. In particular, the restriction 
\[p_3|\colon p_3^{-1}(\widehat{U}_e)\to \widehat{U}_e\] 
is also a Grothendieck bifibration.

We will prove that $p_3$ is not a retract of a projection associated to a product of finite T$_0$--spaces. This will show that the hypothesis over the height of the base of $p$ in \ref{lemm:altura_1_con_maximo} is necessary.

Let $X$ and $Y$ be finite T$_0$--spaces, let $\pi_X\colon X\times Y\to X$ and $\pi_Y\colon X\times Y\to Y$ be the projection maps and suppose that $p_3$ is a retract of $\pi_X$. Then, there exist continuous maps $i\colon E_3\to X\times Y$, $r\colon X\times Y\to E_3$, $j\colon B_3\to X$, $s\colon X\to B_3$ such that $ri=\id_{E_3}$, $sj=\id_{B_3}$, $\pi_X i=jp_3$ and $p_3 r=s\pi_X$, as the following commutative diagram shows.

\begin{center}
\begin{tikzpicture}[x=2cm,y=2cm]
\tikzstyle{every node}=[font=\footnotesize]
\draw (0,0) node (BL){$B_3$};
\draw (1,0) node (X){$X$};
\draw (2,0) node (BR){$B_3$};
\draw (0,1) node (EL){$E_3$};
\draw (1,1) node (XY){$X\times Y$};
\draw (2,1) node (ER){$E_3$};
\draw[\arr] (BL) -> (X) node [midway,below] {$j$};
\draw[\arr] (X) -> (BR) node [midway,below] {$s$};
\draw[\arr] (EL) -> (XY) node [midway,above] {$i$};
\draw[\arr] (XY) -> (ER) node [midway,above] {$r$};
\draw[\arr] (EL) -> (BL) node [midway,left] {$p_3$};
\draw[\arr] (XY) -> (X) node [midway,left] {$\pi_X$};
\draw[\arr] (ER) -> (BR) node [midway,right] {$p_3$};
\draw[\arr,bend left=45] (EL) to (ER);
\draw[\arr,bend right=45] (BL) to (BR);
\draw (1,1.63) node {$\id_{E_3}$};
\draw (1,-0.63) node {$\id_{B_3}$};
\end{tikzpicture}
\end{center}

Now, since $a\leq c$ then 
\[j(a)\leq j(c)=jp_3(c,0)=\pi_X i(c,0),\]
and since $(b,0)\leq (c,0)$ then $\pi_Yi(b,0)\leq \pi_Yi(c,0)$. It follows that
\[(j(a),\pi_Yi(b,0))\leq (\pi_Xi(c,0),\pi_Yi(c,0))=i(c,0).\]
Therefore $r(j(a),\pi_Yi(b,0))\leq ri(c,0)=(c,0)$. Similarly, $r(j(a),\pi_Yi(b,0))\leq (d,0)$. 

On the other hand,
\[p_3 r(j(a),\pi_Yi(b,0))=s\pi_X(j(a),\pi_Yi(b,0))=sj(a)=a\]
and hence $r(j(a),\pi_Yi(b,0))\in p_3^{-1}(a)$. Then $r(j(a),\pi_Yi(b,0))\in U_{(c,0)}\cap U_{(d,0)}\cap p_3^{-1}(a)=\varnothing$, which entails a contradiction. Therefore the map $p_3$ is not a retract of the projection of a product of finite T$_0$--spaces.

In a similar way it can be proved that $p_3|$ is not a retract of the projection of a product of finite T$_0$--spaces, which shows that the hypothesis that $B$ has a maximum element in \ref{lemm:altura_1_con_maximo} is also necessary. 

Note that since $B_3$ has a maximum element then, by \ref{coro_fiber_bundle_with_simply_connected_base}, all fiber bundles with base $B_3$ and fiber T$_0$ are isomorphic to projections of products. Hence, $p_3$ is not a retract of a fiber bundle over $B_3$ whose total space is finite and T$_0$. In particular, $p_3$ can not be obtained by successively removing beat points of a fiber bundle between finite T$_0$--spaces as we wanted to show.

\bibliographystyle{acm}
\bibliography{references}

\end{document}